\documentclass[]{scrartcl}
\usepackage[english]{babel}

\usepackage{amsmath, amsthm, amsfonts, amssymb}
\usepackage{bbm} 

\usepackage{graphicx}
\usepackage{xcolor}
\usepackage{tikz}
\usepackage{pgfplots}
\usepackage{todonotes}
\usetikzlibrary{arrows.meta}
\pgfplotsset{compat=1.18}
\usepackage{verbatim}
\usepackage{}
\usepackage{mathptmx}
\newtheorem{thm}{Theorem}

\newtheorem{obs}[thm]{Observation}
\newtheorem{definition}[thm]{Definition}

\newcommand{\dw}{(\omega)\,}
\newcommand{\orb}{\operatorname{orb}}
\newcommand{\Inv}{\operatorname{Inv}}

\newcommand{\diam}{\operatorname{diam}}
\newcommand{\dist}{\operatorname{dist}}

\definecolor{plotblue2}{RGB}{0,0,128}
\definecolor{plotred}{RGB}{220,20,60}
\definecolor{plotgreen}{RGB}{34,120,60}
\usepackage{hyperref}

\KOMAoptions{abstract=true}

\title{Sharkovskiĭ's theorem under small random perturbations}

\author{
  Isabella Alvarenga\thanks{Institut Denis Poisson, Université d'Orléans, Orléans, 45100, France. 
  Email: \texttt{isabella.goncalves-de-alvarenga@cnrs.fr}} 
  \and 
  Daniel Miranda Machado\thanks{Centro de Matemática, Computação e Cognição, Universidade Federal do ABC, Santo André, SP, 09210-580, Brazil. 
  Email: \texttt{daniel.miranda@ufabc.edu.br}}
}

\date{}

\begin{document}

\maketitle

\begin{abstract}
We establish a Sharkovskiĭ-type theorem for a class of discrete random dynamical systems via the random Conley index. Using the continuation property of the Conley index, we extend classical forcing results to random systems obtained from small random perturbations of one-dimensional maps. In contrast to earlier measure-theoretic results, which are typically subject to an inherent period-doubling ambiguity (realizing period $n$ or $2n$), our topological approach allows us to detect random periodic points and orbits with precise minimal periods. This yields realisation results for arbitrary finite tails of the Sharkovskiĭ ordering. These results are illustrated by constructing random periodic orbits for perturbed versions of the tent map and the logistic map.
\end{abstract}

\setkomafont{descriptionlabel}{\normalfont\bfseries}
\begin{description}
    \item[Keywords:] Random dynamical systems, Sharkovskiĭ's theorem, Conley index, Random periodic orbits, Interval maps
    \item[MSC Classification:] 37H10, 37E05, 37E15, 47H04, 47H40
\end{description}

\raggedbottom

\section{Introduction} \label{intro}

In the study of discrete dynamical systems, periodic orbits play a central role. A fundamental result describing the possible periods of such orbits on an interval is Sharkovskiĭ's theorem \cite{sharkovskiui1995coexistence}, which we now recall.

Let~$C\subset \mathbb R$ be a compact interval. For a given function~$f:C\rightarrow C$, we associate to it the dynamical system  given by the iterations of~$f$. We say that~$f$ admits a periodic orbit of period~$k\in\mathbb N$ if its dynamical system admits a periodic orbit of period~$k$, i.e., if there exists a point~$p\in\mathbb R$ in the domain of~$f$ such that
\[f^k(p)=p\quad \text{and}\quad f^j(p)\neq p \text{ for all } j\in\{1,\dots,k-1\}.\] 

Consider over $\mathbb{N}$ the following ordering:
\[
\begin{array}{c}
3 \prec 5 \prec 7 \prec 9 \prec \dots \prec
2\cdot 3 \prec 2\cdot 5 \prec 2\cdot 7 \prec \dots \prec
2^2\cdot 3 \prec 2^2\cdot 5 \prec 2^2\cdot 7 \prec \dots \\
\prec 2^3\cdot 3 \prec 2^3\cdot 5 \prec \dots \prec
\cdots \prec 2^3 \prec 2^2 \prec 2 \prec 1.
\end{array}
\]

\begin{thm}[Sharkovskiĭ's Forcing Theorem \cite{sharkovskiui1995coexistence}]
\label{thm:shark} Let~$f:C\rightarrow C$ be a continuous function. Assume that~$f$ admits a periodic orbit of period~$p\in\mathbb{N}$. If~$p\prec q$, then~$f$ also admits a periodic orbit of period~$q$.  
\end{thm}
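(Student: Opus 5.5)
I would follow the classical combinatorial proof via Štefan cycles and the Markov graph of a periodic orbit. The basic tool is interval covering: for compact intervals $I,J\subset C$, say $I$ \emph{$f$-covers} $J$ (written $I\to J$) if $J\subseteq f(I)$. Two elementary facts from the intermediate value theorem will be used throughout. First, if $I\to I$ then $f$ has a fixed point in $I$. Second, if $I_0\to I_1\to\cdots\to I_{k-1}\to I_0$ is a closed loop, then there is a point $x\in I_0$ with $f^j(x)\in I_j$ for all $j$ and $f^k(x)=x$. The second fact is proved by successively pulling back compact subintervals $K_j\subseteq I_j$ with $f(K_j)=K_{j+1}$.

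Given a periodic orbit $P=\{x_1<\dots<x_p\}$, let $I_1,\dots,I_{p-1}$ be the intervals between consecutive points, and form the directed graph with $I_i\to I_j$ whenever $f(I_i)\supseteq I_j$ (it suffices to check the images of the endpoints). The key step is to extract from a loop of length $q$ in this graph a periodic point of \emph{minimal} period $q$. The loop must be chosen non-repetitive, i.e.\ not a power of a shorter loop. One must also make sure the point obtained does not lie in $P$ or have a shorter period. This is usually arranged by using a loop that passes through a fixed interval only once and stays elsewhere otherwise, combined with an endpoint argument.

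First I treat odd $p>1$ with $p$ minimal among the odd periods greater than $1$ of $f$. Here I would prove Štefan's structure lemma. There is an interval $J_1$ with $J_1\to J_1$, and a chain $J_1\to J_2\to\cdots\to J_{p-1}$ with $J_{p-1}\to J_1$ and $J_{p-1}\to J_{\text{even}}$. The intervals are arranged alternately outward from the centre, and minimality of $p$ rules out any other edges that would create shorter odd loops. From this graph one reads off loops of every length $q>p$: use $J_1\to\cdots\to J_{p-1}\to J_1\to J_1\to\cdots\to J_1$. One also gets loops of every even length $q<p$, via $J_{p-1}\to J_{p-q}\to\cdots\to J_{p-1}$. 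These give all successors of $p$ in the ordering $\prec$, i.e.\ all $q$ with $p\prec q$ that are not of the form $2^k m$ with $k\ge1$ and $m$ odd greater than $1$. If the minimal odd period is some $p'\le p$ instead, the same conclusion follows, since $p'\prec p\prec q$.

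For general $p=2^k m$, the remaining cases reduce by passing to powers. If $f$ has a point of period $2^k m$, then $g=f^{2^k}$ has a point of period $m$ (when $m>1$ is odd), or of period $1$ or $2$ in the pure power-of-two case. Applying the odd case to $g$ gives $g$-periods $r$, and one must lift them back to $f$-periods $2^k r$. The lifting requires care: a $g$-period $r$ corresponds to an $f$-period $2^j r'$ with some constraints. So I would do induction on $k$ via $f^2$, using the fact that a point of $f$-period $n$ has $f^2$-period $n$ or $n/2$ according to parity. For the powers of two, it suffices to show that period $2^k$ forces period $2^{k-1}$. The base case is that any period $n>1$ forces a period-$2$ point, by a graph argument with $I\to J\to I$ where $I\ne J$. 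Induction on $f^2$ then finishes, and period $1$ is always forced, since any orbit gives an interval $I\to I$.

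I expect the main obstacle to be the Štefan structure lemma for the minimal odd period, together with the bookkeeping that ensures the extracted periodic points have exact minimal period $q$ and are not points of $P$ traversed with a smaller period.
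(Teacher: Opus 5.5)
The paper does not prove this statement; it quotes it from Sharkovski\u{\i} as background, so there is no argument of the authors to compare with. Your outline is the classical \v{S}tefan-cycle/Markov-graph proof and its overall architecture is right. However, one lemma is misstated, and the delicate step of the reduction is left open.

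In the \v{S}tefan structure, $J_{p-1}$ covers the \emph{odd}-indexed intervals $J_1,J_3,\dots,J_{p-2}$, not the even ones. If it covered some $J_j$ with $j$ even and $j\le p-3$, the loop $J_{p-1}\to J_j\to\cdots\to J_{p-1}$ would have odd length $p-j<p$, which is exactly what minimality excludes. Your own loops $J_{p-1}\to J_{p-q}\to\cdots\to J_{p-1}$ for even $q$ also need $p-q$ odd. With the correct statement, the two loop families give all $q>p$ and all even $q<p$. Together with the fixed point in $J_1$, that is every successor of odd $p$, including those of the form $2^km$ with $k\ge1$, contrary to your ``i.e.'' clause.

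The real gap is the lifting from $f^2$ to $f$, which you leave at ``requires care''. A point of $f^2$-period $s$ has $f$-period $2s$ if $s$ is even, but $s$ or $2s$ if $s$ is odd, and induction alone does not remove this ambiguity.

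Suppose $f$ has period $2^km$ with $k\ge1$. Then $f^2$ has period $2^{k-1}m$. Every target $q\neq1$ with $2^km\prec q$ has the form $q=2s$ with $2^{k-1}m\prec s$, so induction gives an $f^2$-orbit of period $s$. There are three cases:
\begin{itemize}
\item For even $s$, this is an $f$-orbit of period $q$.
\item For odd $s>1$ (possible only when $k=1$), the lifted orbit may have $f$-period $s$. You must then apply the odd case to $f$ itself, using $s\prec 2s$.
\item For $s=1$, i.e.\ $q=2$, an $f^2$-fixed point gives nothing, and you need the period-$2$ lemma.
\end{itemize}

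You only assert that lemma. The existence of a $2$-cycle $I\to J\to I$ between distinct Markov intervals is true, but it needs an argument. One standard route: take $M=[x_i,x_{i+1}]$ with $f(x_i)>x_i$ and $f(x_{i+1})<x_{i+1}$, so $M\to M$.
\begin{itemize}
\item If $f$ maps $\{x_1,\dots,x_i\}$ into $\{x_{i+1},\dots,x_n\}$ and vice versa, then $[x_1,x_i]$ and $[x_{i+1},x_n]$ are disjoint and cover each other. This gives a genuine $2$-cycle.
\item Otherwise, two consecutive points of one of these sets have images in different sets, so a Markov interval other than $M$ covers $M$. Since $M$ eventually covers every Markov interval, you get loops through $M$ of all large lengths. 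For even $n$ this gives an odd period $>1$, because an odd-length loop cannot close up on a point of the orbit; for odd $n$ you already have one. Your odd case then gives period $2$.
\end{itemize}
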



In the original work   \cite{sharkovskiui1995coexistence}, more is shown. For~$n\in\mathbb N$ arbitrary but fixed, let~$\mathcal T (n)$ be  the Sharkovskiĭ tail given by
\begin{equation}
    \mathcal T (n):=\{m\in\mathbb N\,:\,m=n \text{ or }n\prec m\}
\end{equation}

\begin{thm}[Sharkovskiĭ's Realisation Theorem \cite{sharkovskiui1995coexistence}]\label{sharkRealisation}
    Let~$n\in \mathbb N$ be arbitrary but fixed. There exists~$f:[0,1]\rightarrow[0,1]$ such that~$f$ has periodic points of all periods~$m\in\mathcal T(n)$, and no periodic points of any period~$m\notin \mathcal T (n)$. 
\end{thm}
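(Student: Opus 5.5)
Fix $n$; we build $f$ according to the form of $n$ in the Sharkovskiĭ ordering, and in every case we only need to rule out the periods outside $\mathcal T(n)$. By Theorem~\ref{thm:shark}, a single orbit of period $n$ already forces all of $\mathcal T(n)$. So the plan is to construct a continuous $f$ with a period-$n$ orbit and no periodic orbit of any period $m\prec n$. For the periods $2^\infty$-tail (the set $\{1,2,4,\dots\}$), which is not of the form $\mathcal T(n)$, nothing is needed here.

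\textbf{Odd base case.} Let $n=2k+1\ge 3$. Take the ``Štefan'' pattern on points $x_1<\dots<x_n$ in $[0,1]$, with the orbit moving outward alternately around a central point: $x_{k+1}\mapsto x_{k+2}\mapsto x_k\mapsto x_{k+3}\mapsto\cdots$, ending with $x_1\mapsto x_{k+1}$. Let $f$ be the piecewise-linear ``connect-the-dots'' map on $[x_1,x_n]$, extended constantly outside. Label the intervals $I_j=[x_j,x_{j+1}]$ and form the Markov graph. It consists of a loop at $I_k$, a cycle of length $n$, and shortcut edges that produce cycles only of length $\ge n$ or of even length $\ge n+1$. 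Using standard counting of non-repetitive loops in the Markov graph, with the usual care about fixed points at the endpoints, one checks that no odd period $3\le m<n$ occurs. Then $f$ has exactly the periods in $\mathcal T(n)$. This loop analysis is the main technical obstacle. I would handle it by writing the adjacency matrix explicitly and arguing that any closed path of odd length $<n$ must be the trivial loop at $I_k$, which gives only the fixed point.

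\textbf{Period doubling step.} Given $g$ on $[0,1]$ with period set $P$, define the ``square root'' map $f$ on $[0,1]$:
\[
f(x)=\tfrac23+\tfrac13 g(3x) \quad\text{for } x\in[0,\tfrac13], \qquad f(x)=x-\tfrac23 \quad\text{for } x\in[\tfrac23,1],
\]
with $f$ linear on $[\tfrac13,\tfrac23]$. Then $f$ swaps $[0,\tfrac13]$ and $[\tfrac23,1]$, and $f^2$ restricted to $[0,\tfrac13]$ is conjugate to $g$. The middle interval is repelling, so its only periodic point is one fixed point. Every other periodic point thus has even period $2m$ with $m\in P$. Hence the period set of $f$ is $\{1\}\cup 2P$. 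Since $\mathcal T(2n)=\{1\}\cup 2\,\mathcal T(n)$ by the definition of the ordering, induction on the power of $2$ handles all $n=2^j\cdot(2k+1)$ with $k\ge1$.

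\textbf{Powers of two.} For $n=2^j$, start from the identity-like map $g\equiv$ constant, which has period set $\{1\}=\mathcal T(1)$. Applying the doubling step $j$ times gives period set $\{1,2,\dots,2^j\}=\mathcal T(2^j)$. This completes all cases.
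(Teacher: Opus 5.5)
\paragraph{Verdict.} Your argument is correct, and it is the classical route:
\begin{itemize}
\item a \v{S}tefan cycle realises $\mathcal T(n)$ for odd $n\ge 3$;
\item the ``square-root'' construction turns a map with period set $P$ into one with period set $\{1\}\cup 2P$, using forward invariance of $[0,\tfrac13]\cup[\tfrac23,1]$ and the slope $-(2+g(1))$ on the middle branch;
\item the identity $\mathcal T(2n)=\{1\}\cup 2\,\mathcal T(n)$, together with the constant map, handles all remaining cases.
\end{itemize}

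\paragraph{Comparison with the paper.} The paper does not prove this statement; it quotes it from Sharkovski\u{\i}. The realisation mechanism the paper actually relies on, in Theorem~\ref{thm:truncated-tent-tail}, is different. It uses the Burns--Hasselblatt family of truncated tent maps $T_h=\min\{h,T\}$. There the critical heights $h(m)$ (the least maximum of an $m$-cycle of the full tent map) are monotone along the Sharkovski\u{\i} order, so a single monotone one-parameter family sweeps through every tail.

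That route gives all tails at once. After moving $h$ slightly off $h(m)$, it also gives maps whose non-fixed periodic orbits avoid the corners and satisfy $|(T_h^k)'|=2^k$. This is exactly the hyperbolicity needed to land in $\mathcal H_{\mathcal T}$. The price is that the real content, the ordering of the heights, is imported from Burns--Hasselblatt.

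Your construction is self-contained given Theorem~\ref{thm:shark}. However, the distinguished period-$n$ orbit it produces is the set of nodes of a piecewise-linear map, where $f$ is in general not $C^1$. It would therefore need extra work before it could feed Theorem~\ref{thm:main_forcing}.

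\paragraph{Corrections to the odd case.} Your description of the Markov graph is off, although the argument you finally propose survives. With $I_j=[x_j,x_{j+1}]$ and your labelling:
\begin{itemize}
\item The loop sits at $I_{k+1}$, because $f(I_{k+1})=I_k\cup I_{k+1}$; it is not at $I_k$.
\item The fundamental cycle $I_{k+1}\to I_k\to I_{k+2}\to I_{k-1}\to\cdots\to I_1\to I_{k+1}$ has length $n-1$, not $n$.
\item Since $f(I_1)=I_{k+1}\cup\cdots\cup I_{2k}$, the extra edges out of $I_1$ produce even cycles of every length $2,4,\dots,n-1$. So it is false that the remaining cycles all have length at least $n$.
\end{itemize}

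What you need, and what holds, is the following. The loop is the only odd simple cycle. Any closed walk through $I_{k+1}$ that uses a non-loop edge must traverse the whole chain $I_{k+1}\to I_k\to\cdots\to I_1$ and then an edge back, i.e.\ at least $n-1$ edges. Hence an odd closed walk of length $<n$ is a power of the loop, and it codes only the expanding fixed point in $I_{k+1}$. The short even cycles are harmless, because all even numbers lie in $\mathcal T(n)$ when $n$ is odd.
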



In the deterministic setting, Sharkovskiĭ’s theorem and its many refinements are classical; see, for instance, \cite{sharkovskiui1995coexistence,LiYorke1975} and the references therein. Extensions to higher-dimensional settings, such as triangular maps on \(\mathbb{R}^n\), and to graph-like structures (trees and graphs) can be found in \cite{Kloeden1979} and \cite{Baldwin1991,AlsedaLlibreMisiurewicz2000}, respectively. 

In the random setting, Sharkovskiĭ’s theorem has been extended along different directions. An early systematic treatment in the framework of random dynamical systems is due to Klünger~\cite{klunger2001periodicity}, who introduced several notions of random periodicity and proved Sharkovskiĭ-type forcing theorems for one-dimensional random dynamical systems under suitable assumptions on the driving metric dynamical system. A complementary randomization approach was developed by Andres~\cite{andres2008randomization}, who formulated an abstract scheme to transfer classical Sharkovskiĭ-type results to random systems via a transformation to a deterministic setting. In a more functional-analytic direction, Barbarski~\cite{Barbarski2011} extended Sharkovskiĭ’s theorem to spaces of measurable functions and \(L^p\)-spaces, while Andres and Barbarski~\cite{AndresBarbarski2016} obtained randomized Sharkovskiĭ-type results for multivalued maps and scalar differential inclusions, proving coexistence of random subharmonic solutions of all orders. More recently, Andres~\cite{Andres2019} applied these randomized Sharkovskiĭ-type theorems to random impulsive differential equations and inclusions, establishing coexistence of random periodic solutions with periods forced by the Sharkovskiĭ ordering. The present work contributes to this line of research by establishing realisation results for arbitrary finite Sharkovskiĭ tails via random periodic points and random periodic orbits in the framework of random dynamical systems.

Working in the framework of random dynamical systems on a compact interval, we study how Sharkovskiĭ's ordering behaves under small random perturbations of a deterministic map $f\in\mathcal H_C$.
Our first main result is a forcing theorem for \emph{finite} Sharkovskiĭ tails: if $f$ has a hyperbolic periodic orbit of each period in a prescribed finite tail, then every sufficiently small random perturbation admits random periodic objects of exactly those periods.
We then show that the same periods can be realized in three complementary senses: as random periodic points, as set-valued random periodic orbits, and in a geometric $(\delta,k)$--sense that controls spatial localisation along the cycle.
Finally, we illustrate the abstract results by explicit constructions for random perturbations of the tent map and the logistic map.

 This paper is organized as follows.
Section~\ref{MainResults} introduces the random dynamical system (RDS) framework and the notions of random periodicity used throughout, and states our main theorems.
Section~\ref{DefConleyDetAndRand} reviews the deterministic and random Conley index in the form needed here, and Section~\ref{ProofMainThm} contains the proofs of the main results.
In Section~\ref{Examples} we present two illustrative examples, showing how our theorems produce random periodic orbits for perturbations of the tent map and the logistic map.

\section{Main results} \label{MainResults}

\subsection{Preliminaries on random dynamical systems}\label{preliminariesonRDS}
Random dynamical systems describe the evolution of a system under the combined action of deterministic dynamics and a driving noise; see \cite{arnold1995random}. In our setting, the noise is modelled by a metric dynamical system on a probability space, and the dynamics on the phase space are given by a cocycle over this base.

Throughout the rest of this work, we fix~$(\Omega, \mathcal{F},\mathbb{P})$ a probability space and~$(X, d)$ a metric space. We also consider~$X$ as a measurable space by equipping it with the Borel $\sigma$-algebra $\mathcal{B}(X)$.

Let~$t\in\mathbb T$ be a time parameter, where~$\mathbb T$ is $\mathbb N$ or $\mathbb Z$. Consider~$(\theta_t)_{t\in \mathbb{T}}$ a flow defining a metric dynamical system over~$(\Omega, \mathcal{F}, \mathbb{P})$ such that the function~$(t,\omega)\mapsto \theta_t\omega$ from $\mathbb{T}\times \Omega$ to $\Omega$ is measurable and the measure~$\mathbb{P}$ invariant under~$\theta$, i.e.,~$\theta_t\mathbb{P}=\mathbb{P}$. A \textbf{random dynamical system} (RDS)~$\phi$ is a function~$\phi:\mathbb{T}\times \Omega \times X \rightarrow X$ which is~$(\mathcal{B}(\mathbb T)\otimes\mathcal F\otimes\mathcal B(X),\,\mathcal B(X))$-measurable and it satisfies the cocycle property:
\begin{equation}
    \phi(0,\omega)=\text{id}_X, \qquad  \label{posCocycleComp}
                \phi(t+s,\omega) = \phi(t,\theta_s\omega)\circ\phi(s,\omega) \text{ for all }t,s\in \mathbb{T} \text{ and }\omega \in \Omega.
\end{equation}

To state Sharkovskiĭ's original result, we considered a class of dynamical systems generated by the iterations of a map~$f$. 
We proceed in a similar manner for the random case, and thus we now describe a more specific class of random dynamical systems, namely those generated by the iterations of a random map. 

To this end, we consider a \textbf{random map}~$\varphi$, defined as a function  \(\varphi:\Omega\times X\to X\) such that
for each \(\omega\in\Omega\), \(\varphi(\omega,\cdot)\) is continuous, and
\((\omega,x)\mapsto\varphi(\omega,x)\) is \((\mathcal F\otimes\mathcal B(X),\mathcal B(X))\)-measurable.
Given a random map \(\varphi\), we define the associated RDS \(\phi\) by
\begin{equation}\label{eq_RDS_GenByMap}
  \phi(0,\omega,x)=x,\qquad
  \phi(n,\omega,x)
    := \varphi(\theta_{n-1}\omega)\circ\cdots\circ\varphi(\omega)(x),
    \quad n\ge1.
\end{equation}

It can be shown that this construction satisfies the properties required for a random dynamical system when~$\theta:\mathbb{T}\times\Omega \rightarrow\Omega$ is a measurable fixed flow.

Next, we introduce a bit of notation. Let $\varphi$ be a random map and let $\phi$
be the random dynamical system generated by $\varphi$ as in
\eqref{eq_RDS_GenByMap}. For each $k\in\mathbb{N}$ we define the
time-$k$ random map
\[
  \varphi^{k}(\omega,\cdot) := \phi(k,\omega,\cdot), \qquad \omega\in\Omega.
\]

In other words, $\varphi^{k}(\omega)\colon C\to C$ denotes the $k$–step map
of the cocycle at base point $\omega$.

For the remainder of the paper, unless stated otherwise, we consider discrete time ($\mathbb{T} = \mathbb{N}$).

\subsection{Statement of main theorems} \label{StatementMainThm}

In this subsection, we present the main result: a version of Sharkovskiĭ's theorem for small random perturbations of a dynamical system generated by the iterations of a continuous map. To state this result rigorously, we first introduce the notion of a random analogue of a periodic point.

A detailed discussion of this concept is provided in the next subsection. For now, we present three definitions that capture this analogue. The first two have appeared previously in the literature \cite{klunger2001periodicity}, whereas the third is a novel definition proposed in this work. We emphasize that this new definition incorporates a geometric aspect that is absent in the other two, which distinguishes our approach from previous work. In what follows, we fix a random dynamical system~$\phi$ generated by~$\varphi$  with a measurable driving flow \(\theta=(\theta_t)_{t\in\mathbb T}\) on \((\Omega,\mathcal F,\mathbb P)\).

\begin{definition}[~\cite{klunger2001periodicity}] \label{def_randomPeriodicPoint}
    A random variable~$x : \Omega \to \mathbb{R}$ is called a \textbf{random periodic point of period}~$k \in \mathbb{N}$ if
\[
\phi(k, \omega)\, x(\omega) = x(\theta_k \omega) \quad \mathbb{P}\text{-almost surely}.
\]
 
It is said to have \textbf{minimal random period}~$k$ if, in addition
\[
\mathbb{P}\Big(\phi(l, \omega)\, x(\omega) \neq x(\theta_l \omega)\Big) > 0\quad \text{ for every }l \in \{1, \dots, k-1\}.
\]
\end{definition}

A \textbf{random set}~$P$ is a function from~$\Omega$ to~$\mathcal P (X)$ where~$\mathcal P(X)$ is the power set of~$X$. It is said to be \textbf{random invariant} if\[\varphi\dw P(\omega)=P(\theta\omega)\quad \mathbb{P}\text{-a.s.}\] 

For a given set~$A$, we let~$\#A$ denote the cardinality of~$A$. 

\begin{definition}[~\cite{klunger2001periodicity}]\label{def_randomPeriodicOrbit}
We say that a random invariant set~$P$ is a \textbf{random periodic orbit of period}~$k$ if~$\#P\dw=k$ $\mathbb{P}$-a.s. We say that~$P$ has \textbf{minimal period}~$k$ if~$P$ does not contain any random invariant proper and non-empty subset.
\end{definition}

Let $x : \Omega \to X$ be a random variable and~$k\in\mathbb N$.  
For a given $\omega \in \Omega$ and~$l \in \{0,1,\dots,k-1\}$, define the set
\[
S_l(\omega) := \big\{ \phi(mk + l, \omega)\, x(\omega) \,\colon m \in \mathbb{N}_0 \big\}.
\]

\begin{definition}[$(\delta,k)$--random periodic orbit]\label{def:epsilonkorbitaale} 
Let $\delta>0$ and $k\in\mathbb{N}$.
A random variable $x:\Omega\to X$ is called a \emph{$(\delta,k)$--random periodic point} if the sets~$S_\ell(\omega)$ satisfy the following properties for $\mathbb{P}$-almost every $\omega\in\Omega$:
\begin{enumerate}
  \item Each fibre $S_\ell(\omega)$ is $\delta$--small, in the sense that
    \[
      \operatorname{diam}\bigl(S_\ell(\omega)\bigr)
        := \sup\{d(y,z):y,z\in S_\ell(\omega)\} < \delta.
    \]
  \item Different fibres are $\delta$--separated:
    \[
      \operatorname{dist}\bigl(S_i(\omega),S_j(\omega)\bigr)
        := \inf\{d(y,z):y\in S_i(\omega),\,z\in S_j(\omega)\} > \delta
        \quad \text{whenever } i\neq j.
    \]
\end{enumerate}
The associated \emph{$(\delta,k)$--random periodic orbit} is the random family $\{S_\ell(\omega)\}_{\ell=0}^{k-1}$.
\end{definition}

After introducing the random analogues of periodic points, we present two additional
definitions that will be useful for characterizing small random perturbations of a
deterministic dynamical system. Throughout the rest of this this subsection we fix a compact interval
$C\subset\mathbb{R}$ and a continuous map $f:C\to C$.

\begin{definition}[Finite Sharkovskiĭ tail]
  A non-empty finite set $\mathcal{T}\subset \mathbb{N}$ is called a
  \emph{finite Sharkovskiĭ tail} if there exists $n\in\mathbb{N}$ such that:
  \begin{enumerate}
    \item $n\in \mathcal{T}$ and $\mathcal{T}\subset \mathcal{T}(n)$;
    \item $\mathcal{T}$ is an initial segment of $\mathcal{T}(n)$ with
      respect to the order $\succ $, i.e.\ whenever
      $n \succ  k \succ  \ell$ and $\ell\in\mathcal{T}$, then
      $k\in\mathcal{T}$.
  \end{enumerate}
  Equivalently, there exist distinct integers
  $k_1,\dots,k_m\in\mathbb{N}$ such that
  \[
    n = k_1 \succ  k_2 \succ  \cdots \succ  k_m
    \quad\text{and}\quad
    \mathcal{T} = \{k_1,\dots,k_m\}.
  \]
\end{definition}

Let $\{p_0,\dots,p_{k-1}\}\subset \mathrm{int}\,C$ be a periodic orbit of $f$ of period
  $k\in\mathbb{N}$, where $p_j = f^j(p_0)$ for $j=0,\dots,k-1$. We say that this orbit is
  \emph{hyperbolic} if there exists a neighbourhood $U$ of the orbit on which $f$ is $C^1$ and
  \[
    \bigl|(f^k)'(p_j)\bigr|\neq 1
    \qquad \text{for all } j=0,\dots,k-1.
  \]

\begin{definition}\label{def:whatever-HC-is}
Let $\mathcal T\subset\mathbb N$ be a finite Sharkovskiĭ tail. We denote by
$\mathcal H_{\mathcal T}$ the family of continuous maps $f:C\to C$ such that,
for every $k\in\mathcal T$, every periodic orbit of $f$ contained in $C$ with
minimal period $k$ is hyperbolic and contained in $\operatorname{int}C$.
\end{definition}

\begin{definition}\label{def:R1eps-simple}
Let $C\subset\mathbb{R}$ be a compact interval and $f\in\mathcal{H}_{\mathcal{T}}$.
Fix a finite Sharkovskiĭ tail $\mathcal{T}$ and let
\[
  S_k=\{p_{k,1},\dots,p_{k,k}\}\subset\mathrm{int}\,C
\]
be a hyperbolic periodic orbit of $f$ of minimal period $k$ for each
$k\in\mathcal{T}$. Choose pairwise disjoint closed intervals
\[
  N_{k,j}\subset\mathrm{int}\,C,\qquad p_{k,j}\in\mathrm{int}(N_{k,j}),
\]
and set
\[
  N:=\bigcup_{k\in\mathcal{T}}\bigcup_{j=1}^{k} N_{k,j}.
\]

Given $\epsilon>0$, we denote by $\mathcal{R}^1_\epsilon(f)$ the class of
random maps $\varphi:\Omega\times C\to C$ such that:

\begin{enumerate}
  \item For each $\omega\in\Omega$, the map $x\mapsto\varphi(\omega,x)$ is
  continuous on $C$ and piecewise \(C^1\), with all possible
  breakpoints contained in $C\setminus N$.

  \item On the neighbourhood $N$ the maps are uniformly \(C^1\)–close to $f$:
  \[
    \sup_{\omega\in\Omega}\sup_{x\in N}
      \Bigl(|\varphi(\omega,x)-f(x)|
            +|\partial_x\varphi(\omega,x)-f'(x)|\Bigr)<\epsilon.
  \]

  \item On the whole interval $C$ the maps are uniformly \(C^0\)–close to $f$:
  \[
    \sup_{\omega\in\Omega}\sup_{x\in C}
      |\varphi(\omega,x)-f(x)|<\epsilon.
  \]
\end{enumerate}
\end{definition}

We finally state the two main results proved in this work. They are the random analogues
of Theorems~\ref{thm:shark} and~\ref{sharkRealisation}, respectively.

\begin{thm}[Random Sharkovskiĭ forcing for finite tails]
\label{thm:main_forcing}
Let $\mathcal{T}$ be a finite Sharkovskiĭ tail and  $f\in \mathcal{H}_{\mathcal{T}}$.
Denote by $n$ the first element of $\mathcal{T}$ in Sharkovskiĭ's ordering
(i.e.\ $\mathcal{T}\subset \mathcal{T}(n)$ and $n$ is maximal in $\mathcal{T}$
with respect to $\succ$).

Then there exists $\epsilon>0$ such that, for every RDS $\phi$ generated by an
arbitrary $\varphi \in \mathcal{R}^1_{\epsilon}(f)$ with noise $\theta$ ergodic
over $\mathbb{P}$, the following properties hold:
\begin{enumerate}
  \item $\phi$ admits a random periodic point of minimal period $k$ for every
    $k\in \mathcal{T}$ in the sense of Definition~\ref{def_randomPeriodicPoint};
  \item if, in addition, $\theta^k$ is ergodic for all $k\in \mathbb{N}$,
    then $\phi$ admits a random periodic orbit of minimal period $k$ for
    every $k\in \mathcal{T}$ in the sense of
    Definition~\ref{def_randomPeriodicOrbit};
  \item there exists $\delta>0$ such that $\phi$ admits a $(\delta, k)$--random
    periodic orbit for every $k\in \mathcal{T}$ in the sense of
    Definition~\ref{def:epsilonkorbitaale}.
\end{enumerate}
\end{thm}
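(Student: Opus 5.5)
The plan is to work one period $k\in\mathcal T$ at a time and then take the minimum of the finitely many thresholds $\epsilon_k$ and $\delta_k$. This is legitimate because $\mathcal T$ is finite and the hyperbolic orbits $S_k$ are fixed in advance. For fixed $k$ and each $j$, I look at the $k$-step composition along the orbit, $\varphi^k(\omega)$ restricted near $p_{k,j}$. Since $p_{k,j}$ is a hyperbolic fixed point of $f^k$, I will shrink the intervals $N_{k,j}$ so that two things hold. First, $f^k(N_{k,j})$ stays inside $N_{k,j}$ (attracting case) or strictly covers $N_{k,j}$ (repelling case). Second, $|(f^k)'|$ stays bounded away from $1$ on the chain $N_{k,j}\to N_{k,j+1}\to\cdots$. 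By the uniform $C^1$-closeness in Definition~\ref{def:R1eps-simple}, together with the absence of breakpoints in $N$, the same two properties hold for every fibre composition $\varphi(\theta_{k-1}\omega)\circ\cdots\circ\varphi(\omega)$ once $\epsilon$ is small. This is the discrete, one-dimensional form of the continuation property of the (random) Conley index. Each $N_{k,j}$ is an isolating block whose index for the deterministic $f^k$ is that of a hyperbolic fixed point, and by continuation the random index of the perturbed cocycle is the same. Its Lefschetz number is $\pm1\neq0$, so every fibre has a nonempty invariant part.

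Concretely, to obtain random fixed points I avoid abstract index theory and use hyperbolicity directly. In the repelling case, for each $\omega$ I pull back along the orbit and define
\[
x(\omega)=\bigcap_{m\ge0}\bigl(\varphi^{mk}(\theta_{-mk}\omega)|_{\text{chain}}\bigr)^{-1}\text{-type nested preimages},
\]
or, more cleanly, I view the sequence of expanding branches as a nonautonomous contraction of the inverses. In the attracting case I use forward pullback $x(\omega)=\lim_m \varphi^{mk}(\theta_{-mk}\omega)(y)$. Either way I get a unique point in $N_{k,j}$ whose orbit stays in the chain of blocks, and it is measurable as a limit of measurable maps. If $\mathbb T=\mathbb N$ only, so that $\theta$ is not invertible, I would instead pass to the fibrewise invariant set, use the random Conley index from Section~\ref{DefConleyDetAndRand}, and apply a measurable selection theorem. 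Uniqueness in the chain gives $\phi(1,\omega)x(\omega)$ as the analogous point in $N_{k,j+1}$ for $\theta\omega$, and hence $\phi(k,\omega)x(\omega)=x(\theta_k\omega)$. Minimality of the period comes from the disjointness of the $N_{k,j}$: for $1\le l<k$ we have $\phi(l,\omega)x(\omega)\in N_{k,j+l}$ while $x(\theta_l\omega)\in N_{k,j}$, so they never coincide. This gives (1). Ergodicity of $\theta$ is needed only to make the almost-sure statements consistent.

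For (2) I set $P(\omega)=\{x_j(\omega)\}_{j}$, the collection of these points over the cycle, with the orbit points chosen consistently. Then $\varphi(\omega)P(\omega)=P(\theta\omega)$ and $\#P(\omega)=k$. Any random invariant proper subset would have to meet the blocks in a $\theta$-invariant but nontrivial pattern. Ergodicity of $\theta^k$ forces the index pattern to be almost surely constant, so such a subset is all of $P$, and $P$ is minimal. For (3) I take $\delta$ smaller than half the minimal gap between distinct blocks and larger than every block diameter; this requires choosing the blocks with diameter below a third of the gaps. The sets $S_\ell(\omega)$ lie in the single block $N_{k,j+\ell}$, so they are $\delta$-small and pairwise $\delta$-separated. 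A single $\delta$ works for all $k$ once the blocks are chosen uniformly for the finitely many $k$.

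The main obstacle is the existence step: ensuring that the fibrewise invariant point exists and is measurable, and that it is consistent along the orbit, for a merely forward-time ($\mathbb N$) driving system. I would handle this through the random Conley index continuation, falling back on the hyperbolic contraction argument when $\theta$ is invertible.
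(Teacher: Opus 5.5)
Your proposal is correct in substance and shares the paper's skeleton: blocks for each $k$, uniform $C^1$ control of the $k$-step composition, uniqueness from expansion or contraction, minimal period from disjointness of the blocks, the cyclic indicator argument with ergodicity of $\theta^k$ for (2), and $\delta$ squeezed between block diameters and gaps. Where you depart from the paper is the existence step.

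The paper produces a point of $A_1(\omega)=\Inv(N_1,G)(\omega)$ from continuation of the random Conley index and the random Wa\.zewski principle. It uses the derivative dichotomy only for uniqueness, and then needs a measurable selection. You use the dichotomy for existence as well, through a pullback limit of contractions in the attracting case or a nested intersection of chain preimages in the repelling case, so measurability comes from limits of measurable maps.

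What each route buys:
\begin{itemize}
\item Yours is elementary and gives the point for every $\omega$, so (1) and (3) need no ergodicity at all.
\item The paper's Step 3(a) applies Wa\.zewski over the base $\theta^k$, so it tacitly needs ergodicity of $\theta^k$ already for item (1). It also rests on the asserted, unproved transfer of Liu's theory from random homeomorphisms to non-invertible maps.
\item Since uniform hyperbolicity is needed for uniqueness anyway, the index argument is logically dispensable here. What it buys the paper is the topological mechanism it sets out to showcase.
\end{itemize}
Your attracting/repelling split is also more careful than the paper's Step 1(i). Requiring $f(N_i)\subset\operatorname{int}(N_{i+1})$ for all $i$ gives $g(N_i)\subset\operatorname{int}(N_i)$, which is incompatible with $|g'|\ge 1+3\beta$ on $N_i$. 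So for repelling cycles the itinerary can only be forced along orbits that stay in $N$, which is exactly your chain formulation.

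Three points need fixing when you write it out.

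First, the repelling point lives on forward fibres. $x_j(\omega)$ is the single point of $\bigcap_{m}\{y\in N_{k,j}:\phi(i,\omega,y)\in N_{k,j+i},\ 0\le i\le mk\}$; your formula with $\theta_{-mk}\omega$ sits in the wrong fibre. This construction uses no invertibility of $\theta$.

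Second, what must cover is the chain branch, not $f^k(N_{k,j})$. For a $2$-cycle with equal radii, $f'(p_1)=10$ and $f'(p_2)=\tfrac{1}{5}$, the chain branch of $f^2$ maps, to first order, onto only the middle fifth of $N_1$. So the inverse branches you compose are not defined on all of $N_1$. Choose adapted radii with $f(N_{k,j})\supset N_{k,j+1}$, with margin, for every $j$; this is possible because $\prod_j|f'(p_{k,j})|>1$. Dually, take $f(N_{k,j})\subset\operatorname{int}N_{k,j+1}$ in the attracting case. There, uniqueness must refer to full $\mathbb Z$-indexed orbits, since every forward orbit stays in the chain.

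Third, drop the fallback for non-invertible $\theta$: in that setting an attracting cycle need not carry any random periodic point. Take $f(x)=\tfrac{x}{2}+\tfrac{1}{4}$, $\mathcal T=\{1\}$, and $\varphi(\omega,x)=f(x)+\epsilon\omega_0$ over the one-sided Bernoulli shift with i.i.d.\ coordinates uniform on $[-1,1]$. A random fixed point $x$ would give $y=x-\tfrac{1}{2}$ with $y\circ\theta=\tfrac{1}{2}y+\epsilon\omega_0$ and $y\circ\theta$ independent of $\omega_0$. This forces $\tfrac{1}{4}\operatorname{Var}(y)=\operatorname{Var}(y)+\epsilon^2\operatorname{Var}(\omega_0)$, which is impossible. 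So invertibility of $\theta$, which the paper also uses tacitly through $\theta^{-j}$ and $\mathbb Z$-indexed orbits, is indispensable in the attracting case, and no selection theorem can replace it.
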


\begin{thm}[Random realisation of finite Sharkovskii tails]
\label{thm:SharkRealisationAle} 
Let $\mathcal{T}\subset\mathbb{N}$ be a finite Sharkovskii tail. Then the
following hold:

\begin{enumerate}
  \item There exists a non-trivial RDS $\phi$ with noise
    $\theta$ ergodic over $\mathbb{P}$ such that the set of minimal
    periods of random periodic points of $\phi$ contains $\mathcal{T}$.

  \item There exists a non-trivial RDS $\phi$ with noise $\theta$ such
    that $\theta^n$ is ergodic over $\mathbb{P}$ for every $n\in\mathbb{N}$
    and the set of minimal periods of random periodic orbits of $\phi$
    contains $\mathcal{T}$.

  \item There exist $\delta>0$ and a non-trivial RDS $\phi$ with noise
    $\theta$ ergodic over $\mathbb{P}$ such that the set of minimal
    periods of $(\delta,k)$--random periodic orbits of $\phi$ contains
    $\mathcal{T}$.
\end{enumerate}
\end{thm}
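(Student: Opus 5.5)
The proof of Theorem~\ref{thm:SharkRealisationAle} reduces to Theorem~\ref{thm:main_forcing} once we have a deterministic map $f\in\mathcal H_{\mathcal T}$ with a hyperbolic periodic orbit of each minimal period $k\in\mathcal T$, together with a concrete noise. Let $n$ be the $\succ$-maximal element of $\mathcal T$. By Theorem~\ref{sharkRealisation} there is a continuous $f_0:[0,1]\to[0,1]$ whose set of periods is exactly $\mathcal T(n)\supset\mathcal T$. Such an $f_0$ need not be hyperbolic or $C^1$ near its orbits, so we will not use it directly.

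\textbf{Construction of $f$.} We build $f$ explicitly from the standard constructions realising Sharkovskiĭ tails. We take a piecewise linear ``$n$-cycle'' map: the linear interpolation of a cyclic permutation of period $n$ of Štefan type, or a $2^j$-fold renormalisation of such a map when $n=2^j m$, or a period-doubling cascade truncated at $2^j$ when $n=2^j$. Such a map has only finitely many periodic orbits of each period. Every slope has modulus different from $1$, which we arrange by choosing the slopes of the linear pieces generically, e.g.\ all $>1$ in modulus on the expanding pieces. After a small $C^0$ modification that moves the breakpoints off these finitely many orbits, the map is affine near every periodic orbit of period in $\mathcal T$, hence $C^1$ there, with multiplier a product of slopes of modulus $\neq1$. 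It remains to check that the orbits lie in $\operatorname{int}C$. We do this by embedding $[0,1]$ into a larger interval $C=[-1,2]$ and extending $f$ so that it maps $C$ into $[0,1]$; any periodic point then lies in $[0,1]\subset\operatorname{int}C$. We conclude that $f\in\mathcal H_{\mathcal T}$ and that $f$ has a hyperbolic orbit of every period $k\in\mathcal T$, not just of the periods in a tail of $n$.

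\textbf{Choice of noise.} We let $\Omega=\{-1,1\}^{\mathbb Z}$ with the Bernoulli$(1/2,1/2)$ product measure and $\theta$ the left shift. The shift is mixing, so $\theta^k$ is ergodic for every $k$, which covers all three items simultaneously. For $\epsilon$ given by Theorem~\ref{thm:main_forcing}, we set
\[
\varphi(\omega,x)=\pi_C\bigl(f(x)+\tfrac{\epsilon}{2}\,\omega_0\bigr),
\]
where $\pi_C$ is the clamp onto $C$. Since $f(C)\subset[0,1]$ and $\epsilon$ is small, the clamp is inactive, so $\varphi$ is continuous, piecewise $C^1$ with the same breakpoints as $f$ (all outside $N$ once $N$ is chosen small), and $C^1$-$\epsilon$-close to $f$. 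Thus $\varphi\in\mathcal R^1_\epsilon(f)$, and it is non-trivial because $\varphi(\omega,\cdot)$ genuinely depends on $\omega$. Theorem~\ref{thm:main_forcing} then yields items 1--3 directly, with the $\delta$ of item~3 supplied by that theorem.

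\textbf{Main obstacle.} The delicate step is producing $f$ with hyperbolic orbits of exactly the periods in $\mathcal T$. Membership in $\mathcal H_{\mathcal T}$ requires that \emph{every} orbit of period $k\in\mathcal T$ be hyperbolic, so non-hyperbolic orbits of any period in $\mathcal T$ must be excluded. For piecewise linear maps, a non-hyperbolic orbit arises only when a product of slopes equals $\pm1$. There are finitely many relevant itineraries of length at most $\max\mathcal T$, so a generic perturbation of the slopes avoids $\pm1$ for all of them. It also preserves the combinatorics, and hence the existence of the forced orbits. Since the statement only asks that the periods \emph{contain} $\mathcal T$, we do not need to control periods outside $\mathcal T$.
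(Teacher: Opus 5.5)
Your reduction is the same as the paper's. You produce $f\in\mathcal H_{\mathcal T}$ with a hyperbolic cycle of each period in $\mathcal T$, then apply Theorem~\ref{thm:main_forcing}. What differs is how you build $f$ and the noise.

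\textbf{The paper's route.} The paper uses a truncated tent map $T_{\tilde h}$, with $\tilde h$ strictly between two consecutive critical heights (via Burns--Hasselblatt). Its periods are exactly the tail of $n$. Once $\tilde h$ is chosen off a countable set, every non-fixed cycle avoids the plateau and has multiplier $\pm 2^k$, so a single one-parameter genericity choice suffices. The paper then invokes Theorem~\ref{thm:main_forcing} for ``any such $\varphi$ and $\theta$'' without exhibiting one.

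\textbf{What your route buys.} You take a \v{S}tefan-type piecewise linear map with generic slopes on an enlarged interval, and you write down a concrete non-trivial perturbation. The two-sided Bernoulli shift is a good choice: it is invertible, which Step~4 of the forcing proof needs since it uses $\theta^{-j}$, and it is mixing, so one RDS serves all three items. The map $f+\tfrac{\epsilon}{2}\omega_0$ is visibly in $\mathcal R^1_\epsilon(f)$. Enlarging $C$ also fixes a point the paper's route glosses over: $T_h(0)=0$ is a fixed point on $\partial C$, so literally $T_h\notin\mathcal H_{\mathcal T}$ whenever $1\in\mathcal T$.

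\textbf{The step you only assert.} The price is a multi-parameter genericity argument whose breakpoint half is not justified. For a connect-the-dots map the entire $n$-cycle consists of breakpoints. ``Moving the breakpoints off the orbits'' changes the map and therefore its orbits. The fix is to change $f$ only near each orbit point while keeping its values on the orbit (extend one adjacent affine piece across the point). You must then re-apply genericity to any cycles with period in $\mathcal T$ born in the new transition pieces.

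\textbf{A shortcut.} Since the statement only asks that the periods \emph{contain} $\mathcal T$, you can bypass all of this. Take $x\mapsto\max\{1-2|x-\tfrac12|,\,-\tfrac a2\}$ on $C=[-a,1+a]$ with $a>0$. It has hyperbolic cycles of every period, none through a breakpoint and all in $\operatorname{int}C$. So it lies in $\mathcal H_{\mathcal T}$ for every finite $\mathcal T$.
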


\subsection{Connections with other works} \label{Connections}

In this subsection we place our main theorem in the context of existing random analogues of Sharkovskiĭ’s theory.

We first compare Theorem~\ref{thm:main_forcing} with the randomization scheme of Andres~\cite{andres2008randomization}.
In that setting, randomness enters through the selection of the map (or parameters) and the subsequent evolution is deterministic along each realisation.
From the viewpoint of random dynamical systems in the sense of Arnold~\cite{arnold1995random}, this corresponds to a restricted class of random systems rather than a cocycle driven by an ergodic base.
Accordingly, Andres adopts a notion of random periodic orbit that differs from ours; we recall it next for comparison.

\begin{definition}[\cite{andres2008randomization}]
\label{def:AndresRandomOrbit}
Let $A\subset X$ be a closed subset of a metric space $X$ and consider a
random operator 
\[
  \varphi:\Omega\times A \to \mathcal{P}(X),
\]
that is, $\varphi$ is measurable with respect to the product
$\sigma$–algebra and takes values in the closed subsets of $X$.

A sequence of measurable maps $\{\nu_{i}\}_{i=0}^{k-1}$, with $\nu_{i}:\Omega\rightarrow A$ for all $i\in\{0,\dots,k-1\}$, is called a \emph{$k$–periodic random orbit} associated to $\varphi$ if:

\begin{enumerate}
\item $\nu_{i+1}(\omega)\in\varphi(\omega,\nu_{i}(\omega))$ for all $i\in\{0,\dots,k-2\}$, and 
      $\nu_{0}(\omega)\in\varphi(\omega,\nu_{k-1}(\omega))$ for $\mathbb{P}$–almost all $\omega\in\Omega$;

\item the $k$–random orbit is \emph{not} obtained by repeating a shorter random orbit, i.e., 
      there do not exist integers $m,p\in\mathbb{N}$ with $p>1$ and $mp=k$ and measurable maps 
      $\mu_{0},\dots,\mu_{m-1}:\Omega\to A$ such that 
      \[
        \nu_{i}(\omega)=\mu_{i \bmod m}(\omega)
        \quad\text{for all } i=0,\dots,k-1
        \text{ and for $\mathbb{P}$–a.e. }\omega\in\Omega;
      \]

\item for all distinct $i,j\in\{0,\dots,k-1\}$ we have
$\nu_{i}(\omega)\neq\nu_{j}(\omega)$ for $\mathbb{P}$–almost all
$\omega\in\Omega$.

\end{enumerate}
\end{definition}

Intuitively, this definition lifts the classical concept of a periodic orbit from the phase space to the space of measurable functions. Instead of a finite set of points permuted by a single map, one considers a sequence of random variables $\nu_0, \dots, \nu_{k-1}$ that are mapped cyclically into one another by the random operator $\varphi$ for almost every realisation $\omega$. For $\mathbb{P}$-almost all $\omega$, the realisation $\varphi(\omega,\cdot)$ then admits a deterministic periodic 
orbit given by the values $\{\nu_0(\omega), \dots, \nu_{k-1}(\omega)\}$.

In \cite{andres2008randomization} a random modification of the Sharkovskiĭ's theorem is proved, stated in the next theorem: 
\begin{thm}[\cite{andres2008randomization}]
\label{thm:AndresShark} 
Suppose that $f(\cdot,x):\Omega\rightarrow\mathbb{R}$ is measurable for
all $x\in\mathbb{R}$ in a complete measurable space $\Omega$, and that
$f(\omega,\cdot):\mathbb{R}\rightarrow\mathbb{R}$ is continuous for
almost all $\omega\in\Omega$. If for some $k\in\mathbb{N}$ the map $f$
possesses a $k$–random orbit, then for every $n\in\mathbb{N}$ with
$k\prec n$ the map $f$ also admits an $n$–random orbit.

\end{thm}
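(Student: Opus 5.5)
The plan is to prove Theorem~\ref{thm:AndresShark} fibrewise: apply the deterministic forcing Theorem~\ref{thm:shark} for each fixed $\omega$, then glue the resulting deterministic periodic points together measurably with a measurable selection theorem. Here $f$ is single-valued, so the random operator of Definition~\ref{def:AndresRandomOrbit} is $\omega\mapsto\{f(\omega,\cdot)\}$.

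\emph{Step 1: reduction to a fibrewise statement.} Let $\{\nu_i\}_{i=0}^{k-1}$ be a $k$--random orbit. Discard a null set and let $\Omega_0$ be the resulting full-measure set, on which three things hold simultaneously:
\begin{itemize}
\item $f(\omega,\cdot)$ is continuous;
\item $\nu_{i+1}(\omega)=f(\omega,\nu_i(\omega))$ for $i\le k-2$, and $\nu_0(\omega)=f(\omega,\nu_{k-1}(\omega))$;
\item the values $\nu_i(\omega)$ are pairwise distinct.
\end{itemize}
For $\omega\in\Omega_0$, the point $\nu_0(\omega)$ satisfies $f(\omega,\cdot)^k(\nu_0(\omega))=\nu_0(\omega)$, and its orbit consists of the $k$ distinct points $\nu_0(\omega),\dots,\nu_{k-1}(\omega)$. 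Hence $\nu_0(\omega)$ is a periodic point of $f(\omega,\cdot)$ of minimal period exactly $k$.

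Theorem~\ref{thm:shark} holds for continuous maps of $\mathbb R$: restrict to the compact convex hull of the orbit together with its images, or use the standard proof, which works on any interval. So for every $n$ with $k\prec n$, the set
\[
\Phi_n(\omega):=\{x\in\mathbb R:\ f(\omega,\cdot)^n(x)=x,\ f(\omega,\cdot)^j(x)\neq x \text{ for } 1\le j<n\}
\]
is non-empty for every $\omega\in\Omega_0$.

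\emph{Step 2: measurable selection.} This is the step I expect to be the main obstacle. The set $\Phi_n(\omega)$ is in general not closed, because points of smaller period can accumulate on it. So the Kuratowski--Ryll-Nardzewski theorem for closed-valued measurable multifunctions does not apply directly.

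I would instead use the fact that $f$ is a Carathéodory function: it is measurable in $\omega$ and continuous in $x$. Such a function is jointly $\mathcal F\otimes\mathcal B(\mathbb R)$-measurable, and so are its iterates $(\omega,x)\mapsto f(\omega,\cdot)^j(x)$. Consequently the graph $\{(\omega,x): \omega\in\Omega_0,\ x\in\Phi_n(\omega)\}$ belongs to $\mathcal F\otimes\mathcal B(\mathbb R)$, being a finite Boolean combination of zero sets of measurable functions.

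Since $\Omega$ is complete, the Aumann (von Neumann) measurable selection theorem then yields a measurable $\mu_0:\Omega_0\to\mathbb R$ with $\mu_0(\omega)\in\Phi_n(\omega)$. Extend $\mu_0$ by a constant on the null set $\Omega\setminus\Omega_0$.

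A possible alternative, if one wants to avoid projection theorems, is to write $\Phi_n(\omega)$ as a countable union of compact pieces, for instance the points at distance $\ge 1/m$ from their first $n-1$ iterates. Each piece is closed-valued and measurable, so one can select on the measurable set of $\omega$ where the $m$-th piece is first non-empty.

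\emph{Step 3: verification.} Define $\mu_i(\omega):=f(\omega,\cdot)^i(\mu_0(\omega))$ for $i=0,\dots,n-1$. These maps are measurable by Step 2, and condition~1 of Definition~\ref{def:AndresRandomOrbit} holds by construction, with closing relation $\mu_0=f(\omega,\mu_{n-1})$.

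Because $\mu_0(\omega)$ has minimal period $n$ under $f(\omega,\cdot)$ for every $\omega\in\Omega_0$, the values $\mu_0(\omega),\dots,\mu_{n-1}(\omega)$ are pairwise distinct on $\Omega_0$. This gives condition~3. It also gives condition~2: if the orbit repeated a shorter one, we would have $\mu_0=\mu_m$ almost surely for some $m<n$, contradicting distinctness on $\Omega_0$.

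Thus $\{\mu_i\}_{i=0}^{n-1}$ is an $n$--random orbit, which proves the theorem.
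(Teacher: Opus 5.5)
Your argument is correct. The paper gives no proof of this statement; it quotes it from Andres and describes his method only as a randomization scheme that transfers the deterministic theorem fibrewise and glues the fibres together by measurable selection. Your proof is exactly that kind of argument: you apply Theorem~\ref{thm:shark} in each fibre, use joint measurability of the Carath\'eodory map and its iterates, and invoke the Aumann selection theorem, which is where completeness of $\Omega$ enters.

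Two small points should be tidied up.

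First, the convex hull of the $k$-orbit need not be forward invariant, and its forward images can grow without bound. So your first justification for using Theorem~\ref{thm:shark} on $\mathbb{R}$ ("restrict to the hull together with its images") does not work as stated. Rely instead on your second justification: the covering-interval proof takes place entirely inside the convex hull of the orbit, so it applies to continuous self-maps of $\mathbb{R}$, which is the setting of Sharkovski\u{\i}'s original theorem.

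Second, in your alternative selection route, the pieces $\{x:\ f(\omega,\cdot)^n(x)=x,\ |f(\omega,\cdot)^j(x)-x|\ge 1/m \text{ for } 1\le j<n\}$ are closed but not necessarily compact; intersect with $[-m,m]$ if you want compactness. Their measurability also needs a sentence. One way is to minimize, over a countable dense subset of each compact set, a nonnegative Carath\'eodory function that vanishes exactly on the piece. This route then avoids completeness except for handling null sets.
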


The main difference between Theorem~\ref{thm:AndresShark} and our
result, Theorem~\ref{thm:main_forcing}, lies in the source of
randomness: in~\cite{andres2008randomization} the random choice occurs at the level of selecting the map, after which the system evolves
deterministically, whereas in our setting the noise acts continuously
through an ergodic driving base.

A closely related but more functional–analytic viewpoint is developed by
Barbarski~\cite{Barbarski2011}, who reformulates Andres’ random
Sharkovskiĭ theorem in the general setting of measurable (random)
operators and characterizes the existence of random orbits via
Kuratowski–Ryll-Nardzewski measurable selection theorems.

We next turn to the one–dimensional RDS framework developed by Klünger~\cite{klunger2001periodicity}. An adaptation of the notion of periodic orbit to random dynamical systems was proposed there, in a formulation that is intrinsically one–dimensional: the phase space is $\mathbb{R}$ and the definition explicitly exploits the total order on $\mathbb{R}$.

\begin{definition}[Random periodic cycle]\label{orbitaAleKlunger}
Following \cite{klunger2001periodicity}, consider a random dynamical system
$\phi$ generated by $\varphi$ over a metric dynamical system
$(\Omega,\mathcal{F},\mathbb{P},\theta)$ with state space $\mathbb{R}$.

Let $k \in \mathbb{N}$ and let $x_i \colon \Omega \to \mathbb{R}$,
$i \in \{1,\dots,k\}$, be random variables such that
\[
  x_1(\omega) < x_2(\omega) < \dots < x_k(\omega)
  \quad \text{for $\mathbb{P}$-a.e.\ $\omega$}.
\]
We say that the family $(x_i)_{i=1}^k$ \textbf{forms a random periodic cycle
of period $k$} if there exists a deterministic permutation $\pi \in S_k$ such
that
\[
  \phi(1,\omega,x_i(\omega)) = x_{\pi(i)}(\theta\omega)
  \quad \text{for $\mathbb{P}$-a.e.\ $\omega$ and all } i \in \{1,\dots,k\}.
\]

Moreover, we say that these random variables form a \textbf{random periodic
cycle of minimal period $k$} if, in addition,
\[
  \phi(\ell,\omega,x_i(\omega)) \neq x_i(\theta^\ell\omega)
  \quad \text{for all } i \in \{1,\dots,k\} \text{ and } 1 \le \ell < k,
\]
on a set of positive $\mathbb{P}$-measure.
\end{definition}

The relation between the objects in Definitions~\ref{def_randomPeriodicPoint},~\ref{def_randomPeriodicOrbit} and~\ref{orbitaAleKlunger} is subtle. It is true that random periodic points, random periodic cycles and random periodic orbits are in a one to one relation if the periods are $1$, as observed in Remark 4.10 of \cite{klunger2001periodicity}. Further discussions regarding the relation of those objects is done in Theorem 4.15 of the same reference. We stress that, generally, the existence of a random periodic point of minimal period $k$ \textit{does not} imply the existence of a random periodic cycle of minimal period $k$. However, if $x$ is a random periodic point of minimal period $k$, there exists a random periodic orbit of minimal period $k$ that contains $x$ as long as $\theta^k$ is ergodic.

Within this framework, Klünger~\cite{klunger2001periodicity} establishes a random analogue of Sharkovskiĭ's theorem for real-valued random dynamical systems on $\mathbb{R}$, formulated in terms of random periodic cycles in the sense of Definition~\ref{orbitaAleKlunger}. 

\begin{thm}[\cite{klunger2001periodicity}] \label{thm:SharkKlunger} Consider a random dynamical system $\phi$ generated by $\varphi$ with
noise $\theta$ such that $\theta^{n}$ is ergodic for all
$n\in\mathbb{N}$, and let $\{x_{1}\dw,\dots,x_{k}\dw\}$ be a random
periodic cycle of minimal period $k$, where
$x_{1}<\dots<x_{k}$ $\mathbb{P}$-a.s. Then, for every $n\in\mathbb{N}$
such that $k\prec n$, the system $\phi$ admits a random periodic orbit
of minimal period $n$ or $2n$.

\end{thm}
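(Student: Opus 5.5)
The plan is to mimic the Markov-graph (Štefan) proof of the deterministic theorem. The key point is that the order structure of a random periodic cycle makes the covering relations \emph{deterministic}, even though the intervals themselves move with $\omega$.

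\textbf{Step 1: a deterministic covering graph.} For $\mathbb P$-a.e.\ $\omega$ set $I_i(\omega):=[x_i(\omega),x_{i+1}(\omega)]$ for $i=1,\dots,k-1$. By Definition~\ref{orbitaAleKlunger}, $\varphi(\omega)$ sends the endpoints of $I_i(\omega)$ to $x_{\pi(i)}(\theta\omega)$ and $x_{\pi(i+1)}(\theta\omega)$. Continuity and the intermediate value theorem then give
\[
\varphi(\omega)\bigl(I_i(\omega)\bigr)\supseteq I_j(\theta\omega)\quad\text{for all } j \text{ between } \min\{\pi(i),\pi(i+1)\} \text{ and } \max\{\pi(i),\pi(i+1)\},
\]
and the right-hand condition depends only on $\pi$. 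So the $\pi$-covering graph $G_\pi$ is exactly the one of a deterministic interval map with periodic pattern $\pi$.

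\textbf{Step 2: combinatorics.} Minimality of the random period forces $\pi$ to be a $k$-cycle: any proper $\pi$-invariant subset of indices would give $\phi(\ell,\omega)x_i(\omega)=x_i(\theta^\ell\omega)$ a.s.\ for some $\ell<k$. The purely combinatorial part of Sharkovskiĭ's proof (Štefan cycles, and the usual splitting into odd $k$, $k=2^r\cdot\text{odd}$, and powers of $2$) then produces, for each $n$ with $k\prec n$, a loop
\[
J_0\to J_1\to\dots\to J_{n-1}\to J_0
\]
in $G_\pi$ that is not a repetition of a shorter loop. Moreover, it can be chosen so that a point following it cannot lie in the cycle $\{x_i\}$.

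\textbf{Step 3: random fixed points of the loop.} This is where the random setting differs. We need $x(\theta^n\omega)=\phi(n,\omega)x(\omega)$, not a fibrewise fixed point, so we work in pullback form, assuming $\theta$ is invertible (measurably extend to two-sided time if necessary). Define the random compact sets
\[
K(\omega):=\bigcap_{m\ge1}\phi(mn,\theta^{-mn}\omega)\Bigl(\bigl\{y\in J_0(\theta^{-mn}\omega): \phi(s,\theta^{-mn}\omega)y\in J_{s \bmod n}(\theta^{s-mn}\omega),\ 0\le s\le mn\bigr\}\Bigr).
\]
The covering relations of Step 1 ensure that each term is a non-empty compact interval and that the terms are nested, so $K(\omega)\ne\emptyset$. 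Measurability follows from the measurability of $\phi$ and of the $x_i$. The cocycle property gives $\phi(n,\omega)K(\omega)=K(\theta^n\omega)$.

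To select points, take $a(\omega)=\min K(\omega)$ and $b(\omega)=\max K(\omega)$. Along the loop, the composite map restricted to the relevant subinterval is a surjection onto $J_0(\theta^n\omega)$ whose endpoint behaviour either preserves or reverses orientation. Accordingly, the pair $\{a,b\}$ is mapped to $\{a\circ\theta^n,b\circ\theta^n\}$ either pointwise or with a swap.

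\textbf{Step 4 (main obstacle): minimal period $n$ or $2n$.} Here is the hardest step. In the orientation-preserving case, $a$ is a random periodic point of period $n$. In the reversing case, one only gets period $2n$ for $a$, with $\{a,b\}$ invariant under $\phi(n,\cdot)$. This is precisely the source of the "$n$ or $2n$" ambiguity.

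Closing off the random periodic orbit then uses the standing assumption that $\theta^n$ (and $\theta^{2n}$) is ergodic. Consider the random set $P(\omega)=\bigcup_{s<n}\phi(s,\theta^{-s}\omega)\{a,b\}(\theta^{-s}\omega)$, or its period-$2n$ analogue. Its cardinality is a $\theta$-invariant random variable, hence a.s.\ constant.

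For minimality, argue as follows. Suppose some proper non-empty random invariant subset existed. Since the itinerary of each point of $P(\omega)$ through $J_0,\dots,J_{n-1}$ is determined by the loop, the subset would be invariant under $\theta^{\ell}$ for a proper divisor $\ell$ of the period. Ergodicity would then force the itinerary to be $\ell$-periodic, contradicting the non-repetition of the loop from Step 2. The delicate point is excluding points that sit on the endpoints $x_i$, where itineraries are ambiguous. I would handle this by the choice of loop in Step 2, together with the fact that an a.s.\ coincidence with some $x_i$ would impose period $k$ on the itinerary.
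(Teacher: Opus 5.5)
The paper gives no proof of this statement (it is quoted from Kl\"unger), so your proposal has to stand on its own. As written it has genuine gaps. The first is that the construction in Step 3 is degenerate. By your own covering relations from Step 1, every $z\in J_0(\omega)$ has a backward chain of length $mn$ that follows the loop. Hence each image you intersect equals $J_0(\omega)$, and $K(\omega)=J_0(\omega)$. So $\phi(n,\omega)K(\omega)=K(\theta^n\omega)$ fails in general. Moreover, $a(\omega)=\min J_0(\omega)$ is just a cycle point $x_i(\omega)$, which is sent to $x_{\pi^n(i)}(\theta^n\omega)$; that need not be an endpoint of $J_0(\theta^n\omega)$.

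What you need is the \emph{forward} itinerary. Choose measurably, at each step of the loop, covering subintervals with the orientation dictated by $\pi$. This is possible by the intermediate value theorem, and it is what makes the orientation deterministic rather than $\omega$-dependent. You obtain $I'(\omega)=[u'(\omega),v'(\omega)]\subseteq J_0(\omega)$ that $\phi(n,\omega)$ maps onto $J_0(\theta^n\omega)$, endpoints to endpoints. Then let $B(\omega)$ be the set of points of $I'(\omega)$ whose orbit follows the loop inside $I'$ forever. Even then, extreme points of an invariant compact set are not automatically sent to extreme points, because the composite is not monotone. You need a further intermediate value argument on $[u'(\omega),\min B(\omega)]$ to show that $\phi(n,\omega)\min B(\omega)$ equals $\min B(\theta^n\omega)$ in the preserving case and $\max B(\theta^n\omega)$ in the reversing case. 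None of this is in your proposal.

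The second gap is the claim in Step 2 that the loop can be chosen so that no cycle point follows it. Your distinctness and minimality arguments rest on this, and it is false in general. Take the $4$-cycle $x_1\to x_3\to x_2\to x_4\to x_1$. Its graph has only the edges $I_1\to I_3$, $I_3\to I_1$, $I_2\to I_2$ and $I_2\to I_3$. The only primitive loop of length $n=2$ is $I_1\to I_3\to I_1$, and both $x_1$ and $x_2$ follow it forever, so your selection can return the original $4$-cycle. The conclusion survives here only because $k=2n$.

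What Steps 3--4 actually require is the following: for each $n$ with $k\prec n$, apart from the trivial case $k=2n$ where the given cycle already works, a primitive loop of length exactly $n$ in $G_\pi$ that no cycle point follows. The usual proof does not supply this. For even $k$ it passes to $f^{2^r}$ and removes the resulting period ambiguity by re-applying the theorem to the newly found orbit. That step is unavailable here, because a set-valued random periodic orbit is not a random periodic cycle with a deterministic permutation.

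Finally, Step 4 conflates three separate facts, and your argument (via non-repetition and $\theta^\ell$-invariance) establishes none of them:
\begin{enumerate}
\item every phase is present in a non-empty invariant subset, which follows from the cyclic shift of an indicator vector together with ergodicity of $\theta^n$;
\item in the reversing case no invariant subset picks one point of each pair $\{a_s,b_s\}$, which follows from the swap together with ergodicity of $\theta^{2n}$;
\item $\#P(\omega)$ is exactly $n$ or $2n$, which holds because two phases that coincide once coincide forever, forcing the loop to equal a nontrivial shift of itself; this needs both primitivity and avoidance of the cycle.
\end{enumerate}
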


Theorem~\ref{thm:SharkKlunger} thus establishes a random analogue of the classical forcing relation: the existence of a cycle of minimal period $k$ implies the existence of random periodic orbits for all subsequent periods in the ordering, subject to the intrinsic ambiguity between $n$ and $2n$. In this sense, Klünger's result validates the Sharkovskiĭ hierarchy for a broad class of one-dimensional random dynamical systems on $\mathbb{R}$, modulo this specific period-doubling indeterminacy.

By contrast, our main result (Theorem~\ref{thm:main_forcing}) employs a
geometric, Conley-index-based argument tailored to small random
perturbations of interval maps. The trade-off is that we must impose
more restrictive structural assumptions on the underlying deterministic
map. However, this geometric strategy allows us to realize arbitrary
finite tails of the Sharkovskiĭ ordering with \emph{precise} control
over the periods, thereby removing the period-doubling uncertainty
present in Theorem~\ref{thm:SharkKlunger} and highlighting the
topological robustness of the orbits we obtain.

Beyond the random periodic orbits considered in
\cite{andres2008randomization, klunger2001periodicity}, we introduce a
more geometric refinement, the $(\delta,k)$–random periodic orbit
(Definition~\ref{def:epsilonkorbitaale}). In contrast with the notion in
\cite{klunger2001periodicity}, which yields a random periodic object
without controlling its spatial displacement, an $(\delta,k)$–random
periodic orbit requires the trajectory to return within an
$\delta$–neighbourhood of the deterministic periodic orbit at
prescribed times, while still allowing intermediate excursions (see
Figure~\ref{fig:epsilon_k_orbit}).


During the intermediate iterates, the point is carried away from this neighbourhood, so it returns there only after $n$ iterates and not before. Note that for a deterministic dynamical system, i.e.\ when the random noise is switched off, any periodic orbit of period $n$ is automatically an $(\delta,n)$–random periodic orbit for every $\delta>0$.

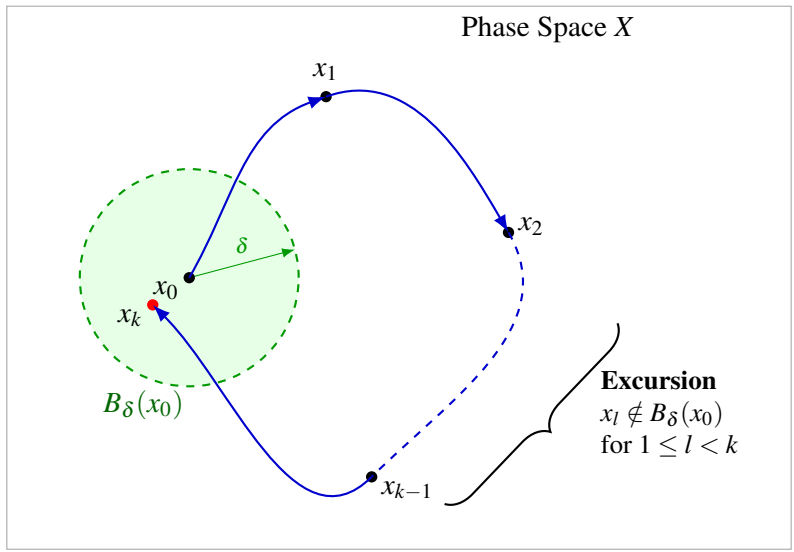
\begin{figure}[ht]
    \centering
\begin{tikzpicture}[>=Latex, scale=1.2]
    \tikzset{
        point/.style={circle, fill=black, inner sep=1.5pt},
        traj/.style={->, thick, blue!80!black, smooth, tension=0.7},
        epsball/.style={fill=green!10, draw=green!60!black, dashed, thick}
    }

    \draw[gray!70, fill=gray!0] (-2,-3) rectangle (6.6,3);
    \node[ anchor=north east] at (5,3) {Phase Space $X$};

    \coordinate (Center) at (0,0);
    \def\radius{1.2}
    \draw[epsball] (Center) circle (\radius);
    \node[green!40!black] at (-0.5, -1.4) {$B_\delta(x_0)$};
    
    \draw[->, green!60!black, thin] (Center) -- node[above, font=\scriptsize] {$\delta$} (15:\radius);

    \coordinate (x0) at (0., 0.);
    \node[point, label={[below left]:$x_0$}] at (x0) {};

    \coordinate (x1) at (1.5, 2.0);
    \node[point, label={[above]:$x_1$}] at (x1) {};

    \coordinate (x2) at (3.5, 0.5);
    \node[point, label={[right]:$x_2$}] at (x2) {};

    \coordinate (xk_minus_1) at (2.0, -2.2);
    \node[point, label={[below right]:$x_{k-1}$}] at (xk_minus_1) {};

    \coordinate (xk) at (-0.4, -0.3);
    \node[point, fill=red, label={[below left]:$x_k$}] at (xk) {};

    \draw[traj] (x0) to[out=60, in=200] (x1);
    \draw[traj] (x1) to[out=20, in=120] (x2);
    
    \draw[thick, blue!80!black, dashed] (x2) to[out=-60, in=45] (xk_minus_1);
    
    \draw[traj] (xk_minus_1) to[out=225, in=-45] (xk);

    \draw[decorate, decoration={brace, amplitude=10pt, mirror}, thick,] 
        (2.8, -2.5) -- (4.7, -0.5) node[midway, right=22pt, align=left, font=\small] 
        {\textbf{Excursion}\\ $x_l \notin B_\delta(x_0)$\\ for $1 \le l < k$};

\end{tikzpicture}
\caption{Geometric illustration of an $(\delta, k)$-random periodic orbit. The trajectory starts at $x_0$ inside the neighborhood $B_\delta$, performs an excursion outside the neighborhood for intermediate steps $l \in \{1, \dots, k-1\}$, and returns to $B_\delta$ at step $k$. This defines a form of periodicity with spatial tolerance $\delta$.}
    \label{fig:epsilon_k_orbit}
\end{figure}

The main difference  between Definition \ref{def:epsilonkorbitaale} and the Definition \ref{orbitaAleKlunger} posed in \cite{klunger2001periodicity} is the geometrical aspect of a periodic orbit, since the latter just guarantees the existence of a random variable but that would not immediately give any information about the physical displacement of the point by the random map. 

Beyond forcing implications, our work also addresses a realisation problem in the random setting.  The results of Klünger and Andres are primarily forcing theorems: they show that the existence of period $k$ implies the existence of further periods, but do not construct systems realizing a prescribed finite Sharkovskiĭ tail as a set of minimal periods. In contrast, Theorem~\ref{thm:main_forcing} gives a constructive existence result: by perturbing deterministic maps with known periodic structure, we obtain random systems whose minimal periods form an arbitrary finite Sharkovskiĭ tail.

Finally, our proof strategy differs substantially from the existing
ones. Klünger’s arguments are based on ergodic theory and the
combinatorics of the order on $\mathbb{R}$, whereas Andres employs
functional–analytic tools such as fixed-point indices for multivalued
maps and measurable selection theorems. Our approach is topological: we use Conley index theory for random dynamical systems. Hyperbolic
deterministic periodic orbits give rise to isolated invariant sets with non-trivial cohomological Conley index, and the homotopy invariance of the index ensures that this topological signature persists under small random perturbations. By computing these indices we can follow such orbits into the random setting and control their minimal periods, thereby avoiding the $n$ versus $2n$ ambiguity that arises in purely
measure-theoretic approaches.

\section{Conley index} \label{DefConleyDetAndRand}

The long-term behaviour of a dynamical system is often organized by a collection of invariant sets such as attractors and repellers. Conley index theory provides topological invariants for isolated invariant sets and allows one to distinguish and track such sets under perturbations.

Throughout this section, we assume that~$(X,d)$ is a Polish and locally compact metric space equipped with a dynamical system given by the iterations of~$f:X\rightarrow X$ a continuous function. 

\subsection{Deterministic Conley Index}
\label{conleyDeterministicCalculation}

 In this subsection, we summarise the definition of the Cohomological Conley index as originally introduced in \cite{mrozek1990leray}. The initial part of the construction follows closely the presentation in \cite{franks2000shift}. Our main reference, from which most of the definitions and proofs in this subsection are drawn, is \cite{villapouca2013teoria}. For a more detailed treatment of the topic, we refer the reader to that work. The next definition provides a concise reformulation of Definitions 2.1, 2.3, and 2.4 from \cite{franks2000shift}.

\begin{definition} \cite{franks2000shift}
We define the following:

\begin{itemize}
    \item For a subset~$N \subset X$, the \textbf{maximal invariant set} in~$N$ is defined by
    \[
        \mathrm{Inv}(N) := \{\, x \in N \mid f^{n}(x) \in N \text{ for all } n \in \mathbb{N} \,\}.
    \]

    \item A set~$S \subset X$ is called an \textbf{isolated invariant set} if there exists a compact neighbourhood~$N$ of~$S$ such that 
    \[
        S \subset \mathrm{int}(N) \quad \text{and} \quad S = \mathrm{Inv}(N).
    \]
    In this case,~$N$ is referred to as an \textbf{isolating neighbourhood} of~$S$.

    \item A compact set~$N$ is called an \textbf{isolating block} if 
    \[
        f(N) \cap N \cap f^{-1}(N) \subset \mathrm{int}(N).
    \]

    \item The \textbf{exit set} of~$N$ is defined by
    \[
        N^{-} := \{\, x \in N \mid f(x) \notin \mathrm{int}(N) \,\}.
    \]
\end{itemize}
\end{definition}

Note that every isolating block is an isolating neighbourhood for the set $S=\text{Inv}(N)$ since $\text{Inv}(N)\subset f(N)\cap N\cap f^{-1}(N)\subset \text{int}(N)$ (this is observed in Chapter 2.1 of \cite{villapouca2013teoria}). This offers a useful tool for identifying isolating neighbourhoods, as the definition of an isolating block can be is easier to verify. The next definition is Definition 3.1 of \cite{franks2000shift}.

\begin{definition} \cite{franks2000shift}
    Let $S$ be an invariant isolated set with $N$ isolating neighbourhood. Assume that $L,N$ are two compact subsets of $X$ with $L\subset N$ both contained in the interior of the domain of $f$. The pair $(N,L)$ is called a \textbf{filtration pair} for $S$ if both $N$ and $L$ are the closure of their interiors and the following properties are verified:

    \begin{itemize}
        \item $\overline{N\setminus L}$ is an isolating neighbourhood of $S$, i.e., $S=\text{Inv}(\overline{N\setminus L})\subset \text{int}(\overline{N\setminus L})$
        \item $L$ is a neighbourhood of $N^-$ in $N$
        \item $f(L)\cap \overline{N\setminus L}= \emptyset$
    \end{itemize}
\end{definition}

For the next theorem, an additional definition is required to establish the notion of distance in the space of functions.

\begin{definition} 
    \textbf{($C^0$-topology)} Let $X$ and $Y$ be two topological spaces and $C^0(X,Y)$ the space of functions of class $C^0$ from $X$ to $Y$. Consider the sets given by the form:
    \begin{equation}
        A(K,U) = \{f\in C^0(X,Y)\,:\,f(K)\subset U\}
    \end{equation}
in which $K\subset X$ is compact and $U\subset Y$ is open. The topology obtained by taking those sets is called $C^0$-topology. 
\end{definition}

The equivalence of the $C^0$ topology and the uniform norm on the space of continuous functions defined on a compact interval $[a, b]$ can be found in Proposition 43.7 of \cite{willard2012general}. This is a key observation for this work.

\begin{thm}
     Let $P=(N,L)$ be a filtration pair for $f$ and let $N_L$ the quotient space $N/L$ in which the collapsed set $L$ is denoted by $[L]$ and it is taken as the base point. Then, $f$ induces a continuous map $f_P:N_L\rightarrow N_L$ that preserves the base point and also has the property that $[L]\subset \text{int}(f^{-1}[L])$. The map $f_P$ is called the pointed space application associated to $P$.
\end{thm}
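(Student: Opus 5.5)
The plan is to define $f_P$ on the quotient explicitly and then check well-definedness, continuity at the base point, and the neighbourhood property in turn. Let $\pi:N\to N_L$ be the quotient map. We set
\[
f_P(\pi(x)) := \begin{cases} \pi(f(x)), & x\in N,\ f(x)\in N,\\ [L], & \text{otherwise, or if } x\in L.\end{cases}
\]
Equivalently, $f_P(\pi(x))=\pi(f(x))$ whenever $x\in\overline{N\setminus L}$ and $f(x)\in N$, and $f_P(\pi(x))=[L]$ otherwise. Well-definedness is immediate because all points of $L$ are sent to the base point, and $f_P([L])=[L]$ holds by construction.

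Continuity is the main step. On the open set $\pi(N\setminus L)$ away from the base point, we argue through the open cover of $N$ given by $U_1:=\{x\in N: f(x)\in \operatorname{int}N\}$ and a neighbourhood of $N\setminus U_1$. The key observation is that $N\setminus U_1\subset N^-$, and $N^-$ lies in the interior (relative to $N$) of $L$, since $L$ is a neighbourhood of $N^-$ in $N$.
\begin{enumerate}
\item Fix $x\in N\setminus L$. If $f(x)\in \operatorname{int}N$, then near $x$ the map coincides with $\pi\circ f$, which is continuous.
\item If instead $f(x)\notin\operatorname{int}N$, then $x\in N^-\subset L$, which contradicts $x\notin L$. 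So this case does not occur.
\item At points of $L$ (the base point), we take a neighbourhood $V$ of $[L]$ in $N_L$, so that $W:=\pi^{-1}(V)$ is a neighbourhood of $L$ in $N$. We must show that $\{x: f_P(\pi x)\in V\}$ contains a neighbourhood of $L$. For $x\in L$ we have either $f(x)\notin N$, or $f(x)\in N$ with $f(x)\notin\overline{N\setminus L}$ (using $f(L)\cap\overline{N\setminus L}=\emptyset$), so $f(x)\in L\setminus\overline{N\setminus L}$, which is open in $N$.
\item By compactness of $L$ and continuity of $f$, a neighbourhood of $L$ is mapped either outside $N$, into $N^-$-type points sent to $[L]$, or into $\operatorname{int}_N L\subset W$. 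Hence the preimage contains a neighbourhood of $L$.
\end{enumerate}
The delicate point is handling points near $L$ whose images straddle $\partial N$. This is exactly where "$L$ is a neighbourhood of $N^-$" is used: such points have images outside $\operatorname{int}N$, so they lie in $L$ and are sent to $[L]$.

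Finally, we prove $[L]\subset\operatorname{int}(f_P^{-1}[L])$. From the third filtration property and compactness, $f(L)$ has positive distance from $\overline{N\setminus L}$. So there is an open neighbourhood $O\supset L$ in $N$ with $f(O)\cap\overline{N\setminus L}=\emptyset$, i.e. every $x\in O$ satisfies either $f(x)\notin N$ or $f(x)\in L$. In both cases $f_P(\pi x)=[L]$. Since $O$ is a saturated open set containing $L$, $\pi(O)$ is open in $N_L$, contains $[L]$, and lies in $f_P^{-1}[L]$. This gives the neighbourhood property.
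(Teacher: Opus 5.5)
Your proof is correct. The paper gives no proof of this statement; it quotes it from Franks--Richeson. Your argument is the standard one:
\begin{itemize}
\item condition (2), i.e.\ $N^-\subset\operatorname{int}_N L$, excludes the only possible discontinuity at points of $N\setminus L$, namely $x\notin L$ with $f(x)\in\partial N$;
\item condition (3) makes $f_P$ identically $[L]$ on a saturated open neighbourhood of $L$.
\end{itemize}

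Three small tidy-ups:
\begin{itemize}
\item Your items 3--4 are superfluous, and the phrase about ``$N^-$-type points sent to $[L]$'' is muddled. Your final paragraph already shows $f_P\circ\pi\equiv[L]$ on an open set $O\supset L$, which gives continuity at the base point directly.
\item No compactness is needed: take $O:=N\cap f^{-1}\bigl(X\setminus\overline{N\setminus L}\bigr)$. Then $N=U_1\cup O$ is an open cover, since $N\setminus U_1=N^-\subset L\subset O$. On $U_1$ one has $f_P\circ\pi=\pi\circ f$, using (3) for points of $U_1\cap L$, and on $O$ the composite is constant. Continuity of $f_P$ then follows from the universal property of the quotient map $\pi$.
\item Your two defining cases overlap for $x\in L$ with $f(x)\in N$. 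They agree there only because (3) forces $f(x)\in L$, so say this explicitly rather than calling well-definedness immediate.
\end{itemize}
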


The next theorem, in a certain sense, ensures that the preceding definitions are indeed meaningful, confirming the existence of the objects defined earlier. The result is established in \cite{franks2000shift}, specifically in the proof of Theorems 3.6 and 3.8 from that work.

\begin{thm} \cite{franks2000shift}\label{existenceFiltPair}
    Let $S$ be an invariant isolated set. Then, it follows that:
    \begin{itemize}
        \item Every neighbourhood of $S$ contains an isolating block $N$. 
        \item If $L$ is any compact neighbourhood of $N^-$ in $N$ sufficiently small, then $(N,L)$ is a filtration pair. 
        \item There exists a neighbourhood of $f$ in the $C^0$-topology such that for any $\Tilde{f}$ in this neighbourhood, $\Tilde{S}=\text{Inv}(N)$ with respect to $\Tilde{f}$ is an invariant isolated set and $(N,L)$ is a filtration pair for $\Tilde{S}$. 
    \end{itemize}

\end{thm}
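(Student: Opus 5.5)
The plan is to follow the Franks–Richeson strategy and prove the three items in order. Each later item uses the object built in the earlier one. Throughout, fix an isolating neighbourhood $M$ of $S$ contained in the prescribed neighbourhood $U$; this is possible because $S$ is compact and isolated, so we may shrink.

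\emph{Isolating blocks.} First I would establish a finite-time characterisation of isolation. Set
\[
S_n:=\{x\in M: f^j(x)\in M \text{ for } |j|\le n\},
\]
where negative $j$ means that some backward orbit segment exists in $M$. Then $\bigcap_n S_n=S\subset\operatorname{int}M$, and compactness gives an $n_0$ with $S_{n_0}\subset\operatorname{int}M$. Next I would build $N$ in the style of Easton/Franks–Richeson, as the closure of the set of points of $M$ reachable by $\varepsilon$-chains lying in $M$ from an $\varepsilon$-neighbourhood of the forward-invariant part
\[
A^+=\{x\in M: f^j(x)\in M\ \forall j\ge0\},
\]
intersected with the analogous backward-reachable set of $A^-$. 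For $\varepsilon$ small, $N\subset S_{n_0}$-type neighbourhoods, so $N\subset\operatorname{int}M\subset U$. The chain construction makes $f(N)\cap N\cap f^{-1}(N)$ consist of points strictly inside both the forward- and backward-reachable sets, which gives $f(N)\cap N\cap f^{-1}(N)\subset\operatorname{int}N$.

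\emph{Filtration pairs.} The exit set $N^-=N\setminus f^{-1}(\operatorname{int}N)$ is closed in $N$, hence compact. The key observation is that $f(N^-)\cap N\subset N^-$. Indeed, if $x\in N^-$ and $f(x)\in N$, then $f(x)\in\partial N$. Since $f(x)\in f(N)\cap N$, the block property forces $f(x)\notin f^{-1}(N)$, so $f(x)\in N^-$. Also $S\cap N^-=\emptyset$, because $f(S)=S\subset\operatorname{int}N$.

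Now choose nested compact neighbourhoods $L\subset L'$ of $N^-$ in $N$, small enough that $L'\cap S=\emptyset$ and $f(L')\cap N\subset\operatorname{int}_N L$. This is possible by uniform continuity on the compact set, since $f(N^-)\cap N\subset N^-\subset\operatorname{int}_NL$. This yields $f(L)\cap\overline{N\setminus L}=\emptyset$. Moreover $\operatorname{Inv}(\overline{N\setminus L})\subset\operatorname{Inv}(N)=S\subset\operatorname{int}(\overline{N\setminus L})$, so $\overline{N\setminus L}$ is an isolating neighbourhood of $S$. Finally, $L$ can be taken to be the closure of its interior, for instance as a closed metric neighbourhood.

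\emph{Continuation under $C^0$ perturbations.} This step is the main obstacle, because the exit set is not upper semicontinuous in $f$. A point $x$ with $f(x)\in\operatorname{int}N$ very close to $\partial N$ can become an exit point for $\tilde f$. To handle this, I would choose $L$ above so that it contains a whole band
\[
\{x\in N:\operatorname{dist}(f(x),X\setminus\operatorname{int}N)\le\eta\}
\]
for some $\eta>0$. Then every $\tilde f$ with $\|\tilde f-f\|_\infty<\eta/2$ satisfies $\tilde N^-\subset\operatorname{int}_N L$.

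The condition $f(L)\cap\overline{N\setminus L}=\emptyset$ is a disjointness of compact sets, so it persists under uniformly small perturbations. Isolation of $\overline{N\setminus L}$ for $\tilde f$ follows from the finite-time criterion of the first step: the inclusion $S_{n_0}\subset\operatorname{int}$ involves only finitely many iterates and compact sets, so it is open in the uniform topology. Hence $\tilde S=\operatorname{Inv}_{\tilde f}(N)$ is isolated and $(N,L)$ is a filtration pair for it. The uniform topology coincides with the $C^0$-topology on compacta by the cited Willard result.
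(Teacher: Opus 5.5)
The paper does not prove this statement; it quotes it from Franks--Richeson. Your three-step plan is the standard route: Easton-type $\varepsilon$-chain blocks, then a small neighbourhood of the exit set, then $C^0$-robustness of the compact/open conditions. Two steps, however, fail as written.

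\textbf{Block construction: $A^+$ and $A^-$ are interchanged.} Every point of $A^+$ is trivially forward-reachable from $A^+$. Its forward orbit accumulates on $\Inv(M)=S\subset A^-$, so it also chains into (a neighbourhood of) $A^-$. Symmetrically, every point of $A^-$ lies in both sets. Hence your $N$ contains $A^+\cup A^-$ for every $\varepsilon$. For a saddle this meets $\partial M$, so the claim $N\subset\operatorname{int}M$ is false.

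This matters. Your argument for $f(N)\cap N\cap f^{-1}(N)\subset\operatorname{int}N$ only produces interiors relative to $M$, and it genuinely fails: dent $M$ so that $\partial M$ touches the local stable segment from one side at a point $p$ whose preimage also lies on that segment. Then $p\in f(N)\cap N\cap f^{-1}(N)$ but $p\notin\operatorname{int}N$.

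The correct construction takes the points $\varepsilon$-chain-reachable in $M$ \emph{from} $A^-$, intersected with the points that $\varepsilon$-chain \emph{into} $A^+$. Prepending backward orbits in $A^-$ and appending forward orbits in $A^+$ gives every such point $\varepsilon$-pseudo-orbit segments in $M$ of length $n_0$ in both directions. For small $\varepsilon$ such points lie near $S_{n_0}\subset\operatorname{int}M$, and this is where your finite-time criterion really enters.

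\textbf{Filtration pair: the margin is missing.} From $f(N^-)\cap N\subset N^-$ and uniform continuity you cannot get $L\subset L'$ with $f(L')\cap N\subset\operatorname{int}_N L$. For a fixed $L$ this only yields some neighbourhood $W$ of $N^-$ with $f(W)\cap N\subset\operatorname{int}_N L$, and $W$ may be much smaller than $L$. Shrinking $L$ shrinks the target as well, so the argument is circular: expansion transverse to $\partial_N N^-$ at a point of $f(N^-)\cap N$ would defeat every metric neighbourhood.

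What is needed is a margin independent of $L$, and your own computation supplies it. You showed that $x\in N^-$ and $f(x)\in N$ force $f^2(x)\notin N$. So $f(N^-)\cap N\subset E:=\{y\in N: f(y)\notin N\}$, and $E$ is relatively open in $N$ and contained in $N^-$. By compactness there is $\eta>0$ such that $x\in N$, $\dist(x,N^-)\le\eta$ and $f(x)\in N$ imply $f(x)\in E$. Hence every compact neighbourhood $L$ of $N^-$ in $N$ lying within the $\eta$-neighbourhood satisfies $f(L)\cap N\subset N^-\subset\operatorname{int}_N L$. This is exactly $f(L)\cap\overline{N\setminus L}=\emptyset$, because $\overline{N\setminus L}=N\setminus\operatorname{int}_N L$, and it explains the phrase ``any sufficiently small $L$''.

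\textbf{Smaller points.} The exit set \emph{is} upper semicontinuous in $f$, by the same compactness argument. So your band condition holds automatically for any compact neighbourhood of $N^-$ in $N$. In the last step you must also show $\Inv_{\tilde f}(N)=\Inv_{\tilde f}(\overline{N\setminus L})$. To do this, apply the finite-time criterion to $N$ with a prescribed neighbourhood of $S$ that is disjoint from $L$, rather than reusing the inclusion $S_{n_0}\subset\operatorname{int}M$.
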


The following definition is put in a wider sense in Section 4 of \cite{franks2000shift}. 

\begin{definition} \cite{franks2000shift} Let $X$ and $X'$ be two metric spaces. We say that $(X,f)$ and $(X',g)$ are shift-equivalent, which we denote by $f\sim_s g$, if there exists $m\in \mathbb{Z}^+$, $r:X\rightarrow X'$ and $s:X'\rightarrow X$ such that the following diagrams commute:

 \begin{center}
\begin{minipage}{0.40\textwidth}
\centering
\begin{tikzpicture}[auto, node distance=2cm, thick]
    
    \node (A) at (0, 0)  {$X$};
    \node (B) at (2, 0)  {$X$};
    \node (C) at (2, -2)  {$X'$};
    \node (D) at (0, -2)  {$X'$};
    
    \draw[->] (A) -- node[above] {$f$} (B);
    \draw[->] (B) -- node[right] {$r$} (C);
    \draw[->] (D) -- node[above] {$g$}(C);
    \draw[->] (A) --  node[right] {$r$} (D);

\end{tikzpicture}
\end{minipage}
\hspace{0cm}
\begin{minipage}{0.40\textwidth}
\centering
\begin{tikzpicture}[auto, node distance=2cm, thick]
    
    \node (A) at (0, 0)  {$X'$};
    \node (B) at (2, 0)  {$X'$};
    \node (C) at (2, -2)  {$X$};
    \node (D) at (0, -2)  {$X$};
    
    \draw[->] (A) -- node[above] {$g$} (B);
    \draw[->] (B) -- node[right] {$s$} (C);
    \draw[->] (D) -- node[above] {$f$}(C);
    \draw[->] (A) --  node[right] {$s$} (D);
    
\end{tikzpicture}
\end{minipage}
\end{center}
and also such that the compositions satisfy $r\circ s = g^m$ and $s\circ r = f^m$. 
\end{definition}

Consider $h_P(S)$ the homotopy class of the applications that preserve the base point in $N_L$. From Theorem \ref{existenceFiltPair}, one can take $f_P$ as the representative of $h_P(S)$. The next theorem is a central result regarding the Conley index theory. It is Theorem 4.3 of \cite{franks2000shift}, where its proof can be found. 

\begin{thm} \cite{franks2000shift}
    Assume that $P_1 = (N_1,L_1)$ and $P_2 = (N_2,L_2)$ are two filtration pairs for $S$. Then, the induced applications $f_{P_1}$ and $f_{P_2}$, on the corresponding pointed spaces, are shift equivalent. 
\end{thm}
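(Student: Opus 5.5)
The plan is to build the shift equivalence directly out of iterates of $f$, following the strategy of \cite{franks2000shift}. Write $P_i=(N_i,L_i)$, $M_i:=\overline{N_i\setminus L_i}$ and $[L_i]$ for the base point of $N_{i,L_i}:=N_i/L_i$. For $m\in\mathbb N$ define $r_m\colon N_{1,L_1}\to N_{2,L_2}$ by
\[
r_m([x]) := \begin{cases} [f^m(x)] & \text{if } x,f(x),\dots,f^m(x)\in M_1 \text{ and } f^m(x)\in N_2\setminus L_2,\\ [L_2] & \text{otherwise,}\end{cases}
\]
and define $s_m\colon N_{2,L_2}\to N_{1,L_1}$ symmetrically. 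The aim is to show that for $m$ large enough $r:=r_m$ and $s:=s_m$ realise the shift equivalence, with lag $2m$.

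\textbf{Step 1 (localisation near $S$).} Since $S=\Inv(M_1)=\Inv(M_2)$ and $S\subset \mathrm{int}(M_1)\cap\mathrm{int}(M_2)$, a compactness argument gives, for every neighbourhood $V$ of $S$, an $m_0$ such that any $x$ with $x,\dots,f^{2m}(x)\in M_i$ and $m\ge m_0$ has $f^m(x)\in V$. I will apply this with $V\subset \mathrm{int}(M_1)\cap\mathrm{int}(M_2)$, chosen so that $V$ is disjoint from $L_1\cup L_2$; this is possible because $S$ does not meet $L_i$, by $f(L_i)\cap M_i=\emptyset$. The consequence is that middle points of long orbit segments in either $M_i$ lie well inside both blocks.

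\textbf{Step 2 (well-definedness and continuity of $r_m$).} This is the main obstacle. One must check that the case split is compatible with collapsing $L_1$ and that $r_m$ is continuous at the boundary between the two cases. If $x\in L_1$, then $f(x)\notin M_1$ by the third filtration property, so $r_m([x])=[L_2]$ and $r_m$ is well defined on the quotient. For continuity, take a point $x$ that sits on the boundary of the region where the first case holds. Either some iterate leaves $M_1$ by passing through the exit set $N_1^-$; since $L_1$ is a neighbourhood of $N_1^-$ and $f(L_1)\cap M_1=\emptyset$, nearby points then also go to the base point. Or $f^m(x)$ lands on the boundary of $L_2$, which is again continuous in the quotient. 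The delicate case is an orbit that exits $N_1$ without entering $L_1$. I expect to rule this out by choosing $m$ via Step 1, or by replacing the condition ``stays in $M_1$'' with ``stays in $N_1$ and avoids $L_1$ up to time $m$'', whichever makes the boundary analysis close. The symmetric argument gives continuity of $s_m$.

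\textbf{Step 3 (the identities).} The intertwining relation $r\circ f_{P_1}=f_{P_2}\circ r$ follows from the definitions by comparing the two sides on the set where all relevant iterates remain in the blocks. Off that set both sides equal the base point, by the forward invariance $[L_i]\subset \mathrm{int}(f_{P_i}^{-1}[L_i])$ from the preceding theorem. The relation $s\circ f_{P_2}=f_{P_1}\circ s$ is identical. For the composition, $s\circ r([x])=[f^{2m}(x)]$ exactly when the orbit segment of length $2m$ stays in the relevant blocks, and otherwise equals the base point. By Step 1 the middle point $f^m(x)$ then lies in $M_1\cap M_2$, which forces $s\circ r=f_{P_1}^{2m}$; likewise $r\circ s=f_{P_2}^{2m}$. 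Hence $f_{P_1}\sim_s f_{P_2}$ with lag $2m$.
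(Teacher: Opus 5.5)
Your Step 1 is correct. The gap is that the maps $r_m,s_m$ you write down are neither continuous nor satisfy the identities in Step 3, and Step 2 looks in the wrong place. The case you call delicate cannot occur: if $f^j(x)\in N_1\setminus L_1$ then $f^j(x)\notin N_1^-$, i.e.\ $f^{j+1}(x)\in\mathrm{int}\,N_1$. The real defect is that you test membership in $P_2$ only at the endpoint $f^m(x)$ of a segment of length $m$. Step 1 controls midpoints, not endpoints, so $f^m(x)$ lies near the unstable set of $S$ and can sit on the frontier of $L_1$, far from $L_2$.

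Concretely, take $f(x)=2x$ on $\mathbb R$, $S=\{0\}$, $N_1=[-1,1]$, $L_1=\{\tfrac25\le|x|\le1\}$, $N_2=[-4,4]$, $L_2=\{\tfrac85\le|x|\le4\}$; both are filtration pairs. Then $r_m([x])\neq[L_2]$ iff $|2^mx|\le\tfrac25$. So at $x=2^{-m}\cdot\tfrac25$ the map jumps from $[\tfrac25]$, at quotient distance $\tfrac65$ from the base point, to $[L_2]$. Your variant ``stay in $N_1$ and avoid $L_1$'' has the same jump. In this example also $f_{P_2}\circ r_m([x])\neq[L_2]=r_m\circ f_{P_1}([x])$ for $\tfrac15<|2^mx|\le\tfrac25$. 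Likewise $r_m\circ s_m([y])\neq[L_2]$ only when $|2^{2m}y|\le\tfrac25$, whereas $f_{P_2}^{2m}([y])\neq[L_2]$ whenever $|2^{2m}y|<\tfrac85$. This last failure is structural: $r_m\circ s_m([y])\neq[L_2]$ needs $f^m(y),\dots,f^{2m}(y)\in M_1$. Localising only the midpoint $f^m(y)$ cannot give this, because the second half of the segment follows the unstable set out of the smaller block.

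What is missing is that the hand-over from $P_1$ to $P_2$ must happen on a stretch of orbit that Step 1 forces into $V$, so the two tracking conditions have to overlap. For instance, with $V$ as in Step 1 and $k\ge m_0$, put $r([x])=[f^{3k}(x)]$ if $f^j(x)\in N_1\setminus L_1$ for $0\le j\le 2k$ and $f^j(x)\in N_2\setminus L_2$ for $k\le j\le 3k$, and $r([x])=[L_2]$ otherwise; define $s$ symmetrically. This condition is relatively open in $N_1$: the $P_2$-condition at $j=k$ is automatic by Step 1, and the others are open because $N_i^-\subset L_i$.

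For continuity, let $x$ be a limit of points satisfying the condition. Then $f^j(x)\in M_1$ for $j\le 2k$ and $f^j(x)\in M_2$ for $k\le j\le 3k$. Now $f(L_i)\cap M_i=\emptyset$ excludes $f^j(x)\in L_1$ for $j<2k$ and $f^j(x)\in L_2$ for $k\le j<3k$. Also $f^{2k}(x)\in V$, being the midpoint of a segment of length $2k$ in $M_2$. So the only possible failure is $f^{3k}(x)\in L_2$, which is exactly what continuity into $N_2/L_2$ requires.

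The same principle gives the identities: every orbit point at least $k$ steps from both ends of a segment in $M_1$ or $M_2$ lies in $V\subset(N_1\setminus L_1)\cap(N_2\setminus L_2)$. Hence the non-base-point sets of $r\circ f_{P_1}$ and $f_{P_2}\circ r$ coincide, and $s\circ r=f_{P_1}^{6k}$ and $r\circ s=f_{P_2}^{6k}$ hold exactly.
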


There are several approaches to defining the Conley index. In this work, we adopt the formulation introduced as a pair in \cite{mrozek1990leray} in Definition 2.8 of that work. However, alternative methods exist. For example, Definition 4.8 of \cite{franks2000shift} introduces the discrete homotopy Conley index. Furthermore, \cite{villapouca2013teoria} provides a comprehensive overview of various definitions in Chapter 2. This latter reference is regarded as an extensive and thorough source on the subject of Conley index definitions.

\begin{definition} \cite{villapouca2013teoria}
    Let $S$ be an isolated invariant set for a continuous map $f$. 
    
    The \textbf{homotopical Conley index} $h(S,f)$ is defined as the shift-equivalence class of $h_P(S)$, where $P=(N,L)$ is a filtration pair for $S$. Let $\text{Con}^*(S)$ denote the \textbf{cohomological Conley index} which is defined as the shift equivalence class of $(f_P)^*:H^*(N_L,[L])\rightarrow H^*(N_L,[L])$ where $H^*(N_L,[L])$ is the cohomology of $N_L$. The \textbf{reduced cohomological Conley index} $\text{Con}^*(S)$ will be defined as a pair $(CH^*(S),\chi^*(S))$ where $CH^*(S) = \cap_{n>0}(f_P^*)^n(H^*(N_L,[L]))$ and $\chi^*(S):CH^*(S)\rightarrow CH^*(S)$ is an automorphism induced by $f^*_P$. 
\end{definition}

\begin{obs} \label{homoClassOfPoint}
    (\cite{szymczak1995conley}) A pointed space $Y$ with an application $g:Y\rightarrow Y$ is such that the homotopy class $(Y,[g])=0$ if, and only if, the map $g^n$ is homotopically equivalent to a constant for some $n\in \mathbb{Z}^+$
\end{obs}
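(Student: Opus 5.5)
The statement to prove is Observation~\ref{homoClassOfPoint}: the class of $(Y,g)$ is zero, meaning shift equivalent to the trivial pointed space $(\{*\},\mathrm{id})$, if and only if some iterate $g^n$ is homotopic to the constant map at the base point. I will work throughout in the category of pointed spaces with base-point-preserving maps up to pointed homotopy. In that category shift equivalence means there exist $m$, $r$, $s$ such that $r\circ g\simeq \mathrm{id}\circ r$, $s\circ \mathrm{id}\simeq g\circ s$, $r\circ s\simeq \mathrm{id}^m$ and $s\circ r\simeq g^m$.

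For the direction ($\Rightarrow$), suppose $(Y,g)\sim_s(\{*\},\mathrm{id})$. Take the data $m$, $r:Y\to\{*\}$ and $s:\{*\}\to Y$. The map $s$ is forced to send $*$ to the base point, since it is pointed. Hence $s\circ r$ is the constant map $c$ at the base point. The relation $s\circ r\simeq g^m$ then gives $g^m\simeq c$, so $n=m$ works.

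For the direction ($\Leftarrow$), suppose $g^n\simeq c$ for some $n\in\mathbb Z^+$. Let $r:Y\to\{*\}$ be the unique map and $s:\{*\}\to Y$ the inclusion of the base point. The square $r\circ g=\mathrm{id}\circ r$ holds trivially, since the target is a point. The square $g\circ s=s\circ\mathrm{id}$ holds because $g$ fixes the base point. Taking $m=n$, we have $r\circ s=\mathrm{id}_{\{*\}}=\mathrm{id}^n$. We also have $s\circ r=c\simeq g^n$ by hypothesis. Thus $(Y,g)$ is shift equivalent to the trivial pointed system, so its class is $0$.

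The only real obstacle is bookkeeping about what ``$=0$'' means, namely that the zero class is the class of the one-point pointed space. A second point is that the commuting conditions must be read up to pointed homotopy, as in Franks--Richeson's homotopy index. Once that convention is fixed, both directions are immediate from the definition of shift equivalence recalled above.
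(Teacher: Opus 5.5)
Your proof is correct. With the index defined, as in the paper, as a shift-equivalence class in the pointed homotopy category, any shift equivalence with $(\{*\},\mathrm{id})$ gives $g^m\simeq s\circ r=c$. Conversely, your $r$, $s$ with $m=n$ form a shift equivalence.

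The paper gives no argument of its own and only cites Szymczak. There the index is an isomorphism class in Szymczak's category of endomorphisms, whose morphisms are equivalence classes of pairs $(\varphi,n)$. The observation reads: $(Y,[g])$ is isomorphic to the zero object if and only if $[g]$ is nilpotent. In that framework one argues that an object is a zero object iff its identity morphism equals the zero morphism. The identity $[(\mathrm{id}_Y,0)]=[(c,0)]$ then unwinds to $g^{l}\simeq g^{l}\circ c=c$ for some $l$.

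The underlying computation is the same in both packagings. Yours has the advantage of working directly with the definition the paper actually adopts. It therefore does not need the Franks--Richeson theorem identifying shift equivalence with isomorphism in Szymczak's category, which is what links the cited statement to the paper's setting.

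One comment on what you call bookkeeping: reading ``homotopic to a constant'' as \emph{pointed} null-homotopy is essential rather than cosmetic. Under a free-homotopy reading, the direction ($\Leftarrow$) fails for badly pointed spaces. Take the comb space based at the top of its limiting tooth. Its identity is freely but not pointedly null-homotopic. So $g=\mathrm{id}$ satisfies the free version of the hypothesis, yet $(Y,[g])$ is not shift equivalent to the point.

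For well-pointed spaces the two readings coincide by the homotopy extension property. This covers the quotients $N/L$ of finite unions of intervals used in the paper, so nothing is lost in the paper's applications.
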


In \cite{mrozek1990leray}, several properties of the Conley index are established. Among these, Proposition 2.10 and Theorem 2.11 from this reference will be utilised in the present work. To ensure the text remains self-contained, these results are stated below:

\begin{thm} \textbf{(Wazewski Property)}
    A nonzero reduced cohomological Conley index implies a non-empty isolated invariant set.
\end{thm}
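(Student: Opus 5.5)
We argue by contraposition: assuming $\operatorname{Inv}(N)=\emptyset$ for an isolating neighbourhood $N$ of $S$, we show that the reduced cohomological Conley index vanishes, i.e.\ $CH^*(S)=0$.

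First, take a filtration pair $P=(N,L)$ for $S$, which exists by Theorem~\ref{existenceFiltPair}. If $S=\emptyset$, then $\emptyset$ is an isolated invariant set. By compactness, emptiness of the invariant set means there is $n_0\in\mathbb{N}$ such that no point of $\overline{N\setminus L}$ stays in $\overline{N\setminus L}$ for $n_0$ consecutive iterates. We establish this by contradiction: if for every $n$ some $x_n$ stayed $n$ steps, a limit point of the $x_n$ would, by continuity of $f$ and closedness of $\overline{N\setminus L}$, stay forever, so it would lie in $S$.

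Second, we translate this into a statement about the pointed map. Since $f(L)\cap\overline{N\setminus L}=\emptyset$ and $L$ is a neighbourhood of the exit set $N^-$, every point of $N$ leaves $\overline{N\setminus L}$ only by mapping into $L$ (or out of $N$) and never returns to $\overline{N\setminus L}$. Together with the first step, this gives that $f_P^{\,n_0}$ maps all of $N_L$ to the base point $[L]$, so $f_P^{\,n_0}$ is constant. Continuity of $f_P$ at the base point is provided by the theorem on the induced map, using $[L]\subset\operatorname{int}(f_P^{-1}[L])$.

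Third, we pass to cohomology. Since $f_P^{\,n_0}$ is constant, $(f_P^*)^{n_0}=(f_P^{\,n_0})^*$ factors through $H^*(\mathrm{pt},\mathrm{pt})=0$, so it is the zero map on $H^*(N_L,[L])$. Hence
\[
CH^*(S)=\bigcap_{n>0}(f_P^*)^n\bigl(H^*(N_L,[L])\bigr)=0,
\]
and the reduced index $(CH^*(S),\chi^*(S))$ is trivial. Alternatively, Observation~\ref{homoClassOfPoint} gives directly that the homotopy index is $0$, which yields the same conclusion. Independence of the choice of filtration pair is guaranteed by shift equivalence: shift-equivalent maps have isomorphic reduced indices, since the maps $r,s$ induce isomorphisms between the intersections of images.

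The main obstacle is the second step: making rigorous that orbits never re-enter $\overline{N\setminus L}$ after entering $L$. This is exactly what the filtration-pair axiom $f(L)\cap\overline{N\setminus L}=\emptyset$ guarantees, but we must also handle points mapped outside $N$, which the quotient construction sends to $[L]$. That the quotient map is continuous at those points is again part of the theorem on $f_P$.
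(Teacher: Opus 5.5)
\textbf{Verdict: correct, by a route different from the standard one.} Your argument is correct. Note that the paper gives no proof of this statement: it quotes it from \cite{mrozek1990leray}.

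\textbf{Comparison with the standard argument.} The usual argument in that framework is essentially one line. If $S=\emptyset$, the empty pair $(\emptyset,\emptyset)$ is an admissible pair, so the index is computed on $H^*(\mathrm{pt},\mathrm{pt})=0$. Independence of the index from the chosen pair then finishes the proof.

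You instead prove that for \emph{every} filtration pair $(N,L)$ of the empty set the pointed map is nilpotent, $f_P^{\,n_0}\equiv[L]$, using compactness of $\overline{N\setminus L}$. This costs a compactness argument but gains three things:
\begin{enumerate}
\item it does not rely on the degenerate empty pair;
\item it does not use the independence theorem at all, so your closing remark about shift equivalence is superfluous;
\item it shows, via Observation~\ref{homoClassOfPoint}, that the homotopy index itself is trivial, not merely its cohomological reduction.
\end{enumerate}

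\textbf{Minor point on Step~2.} The ``main obstacle'' you flag is not one. Since $f_P([L])=[L]$ by definition, Step~2 is just the observation that $f_P^{\,n_0}(x)\neq[L]$ forces $x,f(x),\dots,f^{n_0}(x)\in N\setminus L$. The axiom $f(L)\cap\overline{N\setminus L}=\emptyset$ enters only through continuity of $f_P$.

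Your sentence that genuine $f$-orbits never return to $\overline{N\setminus L}$ is in fact false. A point mapped outside $N$ may later re-enter $N\setminus L$; this happens, e.g., for a rational rotation of the circle with suitable $N$ and $L$. It is harmless, because such points already sit at the base point of $N_L$.

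\textbf{Minor point on Step~1.} A limit point whose forward orbit stays in $\overline{N\setminus L}$ lies in $S$ only under the forward-invariance definition of $\Inv$ written in the paper. Conley theory uses the full-orbit definition, as does the proof of Theorem~\ref{GenMapsConleyIndex}. With that definition, pass to the $\omega$-limit set of the point instead. It is non-empty, contained in $\overline{N\setminus L}$, and satisfies $f(\omega(x))=\omega(x)$. Hence it consists of points with full orbits in $\overline{N\setminus L}$, which contradicts $S=\emptyset$ as required.
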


\begin{thm}[\textbf{Continuation Property}]
Let $f\colon X\to X$ be a continuous map, let $N\subset X$ be an isolating neighbourhood for $f$, and set $S:=\Inv(N,f).$
Then there exists a neighbourhood $\mathcal U$ of $f$ in the $C^{0}$-topology such that, for every
$\tilde f\in\mathcal U$, the set $N$ is an isolating neighbourhood for $\tilde f$ and, writing
$\tilde S:=\Inv(N,\tilde f),$
one has invariance of the cohomological Conley index:
\[
Con^{*}(S,f)=Con^{*}(\tilde S,\tilde f).
\]
\end{thm}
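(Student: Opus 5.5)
Taking the Continuation Property as the target, I will reduce it to the structure provided by Theorem~\ref{existenceFiltPair}, which already packages stability of filtration pairs. First I replace the given isolating neighbourhood $N$ by a more convenient one. By the first item of Theorem~\ref{existenceFiltPair}, $\mathrm{int}(N)$ contains an isolating block $B$ with $S\subset B$. By the second item, there is a compact neighbourhood $L$ of $B^-$ in $B$ such that $P=(B,L)$ is a filtration pair for $S$. By the third item, there is a $C^0$-neighbourhood $\mathcal U_1$ of $f$ such that, for every $\tilde f\in\mathcal U_1$, the pair $(B,L)$ is a filtration pair for $\mathrm{Inv}(B,\tilde f)$.

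Next I make sure that $N$ itself stays isolating and that $\mathrm{Inv}(N,\tilde f)=\mathrm{Inv}(B,\tilde f)$. The plan is a compactness argument. Since $S=\mathrm{Inv}(N)\subset\mathrm{int}(B)$, every point $x\in N\setminus\mathrm{int}(B)$ has a finite time $n_x$ with $f^{n_x}(x)\notin N$, or (reading invariance two-sidedly) one of its full $f$-orbits leaves $N$ in finite time. The condition ``leaves $N$ in finite time'' is open in $x$ and in $f$ for the $C^0$ topology, which coincides with the uniform topology on compacta (Proposition 43.7 of \cite{willard2012general}). Covering the compact set $N\setminus\mathrm{int}(B)$ by finitely many such open sets gives a uniform bound $n_0$ and a neighbourhood $\mathcal U_2$. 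Then every $\tilde f\in\mathcal U_2$ has no invariant points of $N$ outside $\mathrm{int}(B)$. Hence $\tilde S:=\mathrm{Inv}(N,\tilde f)\subset \mathrm{int}(B)\subset\mathrm{int}(N)$, so $N$ is isolating for $\tilde f$, and $\tilde S=\mathrm{Inv}(B,\tilde f)$.

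It remains to show $Con^*(S,f)=Con^*(\tilde S,\tilde f)$, where both are computed with the same pointed space $B_L$. The maps $f_P$ and $\tilde f_P$ on $B_L$ should be homotopic through base-point-preserving maps when $\tilde f$ is close enough. I would use the straight-line homotopy $f_t=(1-t)f+t\tilde f$, which makes sense here since $X\subset\mathbb R$ in the application, or a geodesic/local-contractibility homotopy in general. I would then shrink the neighbourhood to $\mathcal U\subset\mathcal U_1\cap\mathcal U_2$, convex around $f$, so that each $f_t$ lies in $\mathcal U_1$. Then $(B,L)$ is a filtration pair for every $f_t$. In particular $f_t(L)\cap\overline{B\setminus L}=\emptyset$, and points mapped near $L$ or outside $B$ are sent to $[L]$ continuously. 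This yields a continuous homotopy $(f_t)_P$ on $B_L$ fixing $[L]$. Homotopic maps induce the same map $(f_P)^*=(\tilde f_P)^*$ on $H^*(B_L,[L])$, so the shift-equivalence classes coincide. Independence of the filtration pair, from the shift-equivalence theorem of \cite{franks2000shift}, then gives $Con^*(S,f)=Con^*(\tilde S,\tilde f)$.

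I expect the main obstacle to be continuity of the induced homotopy at the collapsed point. One must check that a point of $B$ whose image under $f_t$ leaves $B$ lands in a region that is continuously identified with $[L]$. This requires the exit behaviour to be uniform in $t$, and I would secure it by choosing $L$ as a slightly enlarged neighbourhood of $B^-$ so that uniform closeness keeps all exit points of every $f_t$ inside $\mathrm{int}(L)$.
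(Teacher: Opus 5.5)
The paper does not prove this statement; it quotes it from \cite{mrozek1990leray}. So your argument has to stand on its own, and it has a genuine gap at the homotopy step. Your comparison of $Con^*(S,f)$ with $Con^*(\tilde S,\tilde f)$ depends entirely on joining $f|_B$ to $\tilde f|_B$ by a path of maps that stays in $\mathcal U_1$. The straight line $(1-t)f+t\tilde f$ needs $X$ to be convex; $X\subset\mathbb R$ is not enough. For a general Polish locally compact $X$, no ``geodesic/local-contractibility'' homotopy is available.

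This cannot be patched, because the neighbourhood form of the statement is false in that generality. Take $X$ the Cantor set (a subset of $\mathbb R$), $f=\mathrm{id}$ and $N=X$. Every uniform neighbourhood of $\mathrm{id}$ contains a map $\tilde f$ that sends each piece of a finite clopen partition of small mesh to a point of that piece. With the filtration pair $(X,\emptyset)$, the index of $f$ is the identity on the infinite-dimensional space $H^0(X;\mathbb Q)$, whereas $\tilde f^*$ has finite rank. Any $r,s$ realising a shift equivalence with $s\circ r=\mathrm{id}$ would inject $H^0(X;\mathbb Q)$ into the finite-dimensional image of $\tilde f^*$, so the two indices are not shift equivalent. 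Their reduced indices even have different dimensions.

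What your argument does prove is continuation along a path. It covers $X$ convex, such as the interval $C$ used in the paper. It also covers $X$ an ANR: sufficiently close maps on the compact set $B$ are then homotopic through maps uniformly close to $f$, and only $f_t|_B$ enters the filtration-pair conditions and $(f_t)_P$. Finally, it covers a continuous family $f_t$, $t\in[0,1]$, with $N$ isolating for every $t$. That is how continuation is normally formulated, including the paper's own random version, Theorem~\ref{ContinuationProperty}. You should restrict the statement in one of these ways.

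Within that setting your steps go through, with two small corrections.
\begin{itemize}
\item With the two-sided $\Inv$, $x\notin\Inv(N,f)$ means the forward orbit leaves $N$ or \emph{every} backward branch does. ``One of its full orbits leaves'' is the wrong negation. Openness in $(x,f)$ then needs a uniform bound on the length of backward chains in $N$, which compactness of $N$ supplies.
\item Your worry about continuity at $[L]$ is already settled by your choice of $\mathcal U$, so no enlargement of $L$ is needed. For $x\in L$, the condition $f_t(x)\notin\overline{B\setminus L}$ is open in $(t,x)$, so $(f_s)_P\equiv[L]$ nearby. For $x\in B\setminus L$, the fact that $L$ contains the exit set of $B$ for $f_t$ gives $f_t(x)\in\mathrm{int}\,B$, so $(f_s)_P=p\circ f_s$ nearby.
\end{itemize}
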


\subsection{Conley Index calculations}

In this subsection we compute the Conley index for a hyperbolic $k$--cycle.
The key point is that this index is nontrivial, and we will later use this
nontriviality as the mechanism that forces the existence of invariant dynamics
inside a prescribed isolating neighbourhood.

\begin{thm}\label{GenMapsConleyIndex}
Let $C\subset\mathbb{R}$ be a compact interval and let $f\colon C\to C$ be
$C^{1}$ on a neighbourhood of a periodic orbit
$S=\{p_{1},\dots,p_{k}\}$ of (prime) period $k\in\mathbb{N}$, with
$f(p_i)=p_{i+1}$ for $i\in\{1,\dots,k-1\}$ and $f(p_k)=p_1$.
Set $g:=f^{k}$. Assume that $S$ is \emph{hyperbolic}, i.e.
\[
|g'(p_i)|\neq 1\quad\text{for all }i\in\{1,\dots,k\}.
\]
Then there exist pairwise disjoint compact intervals
\[
N_i=[p_i-\epsilon_i,p_i+\epsilon_i]\qquad (i=1,\dots,k),
\]
with $\epsilon_i>0$, such that, with $N=\bigcup_{i=1}^{k}N_i$, the set $N$ is an
isolating neighbourhood for $S$.

Moreover, exactly one of the following two cases holds:

\smallskip
\noindent\textbf{ (A$_k$) Attracting cycle.} If $|g'(p_i)|<1$ for all $i$, then
\begin{equation}\label{conleyGenMapping_attractor}
\mathrm{Con}^{q}(S)=
\begin{cases}
\left(\displaystyle\bigoplus_{i=1}^{k}\mathbb{Q},\
\begin{bmatrix}
0 & 1 & 0 & \dots & 0 \\
0 & 0 & 1 & \dots & 0 \\
\vdots & \vdots & \vdots & \ddots & \vdots \\
0 & 0 & 0 & \dots & 1 \\
1 & 0 & 0 & \dots & 0
\end{bmatrix}\right), & \text{if }q=0,\\[2mm]
0, & \text{otherwise.}
\end{cases}
\end{equation}

\smallskip
\noindent\textbf{ (R$_k$) Repelling cycle.} If $|g'(p_i)|>1$ for all $i$, then
\begin{equation}\label{conleyGenMapping_repulsor}
\mathrm{Con}^{q}(S)=
\begin{cases}
\left(\displaystyle\bigoplus_{i=1}^{k}\mathbb{Q},\
\begin{bmatrix}
0 & \sigma_1 & 0 & \dots & 0 \\
0 & 0 & \sigma_2 & \dots & 0 \\
\vdots & \vdots & \vdots & \ddots & \vdots \\
0 & 0 & 0 & \dots & \sigma_{k-1} \\
\sigma_k & 0 & 0 & \dots & 0
\end{bmatrix}\right), & \text{if }q=1,\\[2mm]
0, & \text{otherwise,}
\end{cases}
\end{equation}
where $\sigma_i=\mathrm{sign}(f'(p_i))\in\{+1,-1\}$.

\smallskip
In particular, in both cases one has $h(S)\neq 0$.
\end{thm}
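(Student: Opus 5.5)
We will build an explicit filtration pair for $S$ out of small intervals around the $p_i$, and then read off the induced map on cohomology. First we fix the intervals. Because $g'(p_i)=\prod_j f'(p_j)$ by the chain rule, the quantity $|g'(p_i)|$ does not depend on $i$. Consequently either every point of the cycle is attracting or every point is repelling; this is where the dichotomy between (A$_k$) and (R$_k$) comes from. Hyperbolicity gives $\prod_j |f'(p_j)|\neq 1$. By continuity of $f'$ near the orbit, we then choose radii $\epsilon_i>0$ so small that the following hold. The intervals $N_i$ are pairwise disjoint and lie inside the $C^1$ neighbourhood. $f'$ has constant sign $\sigma_i$ on $N_i$. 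The $\epsilon_i$ are weighted along the cycle, for instance $\epsilon_{i+1}=\lambda_i\epsilon_i$ with $\lambda_i$ slightly larger or slightly smaller than $|f'(p_i)|$, so that in the attracting case $f(N_i)\subset\operatorname{int}N_{i+1}$, and in the repelling case $f(N_i)\supset N_{i+1}$ with the endpoints of $N_i$ mapped outside $N_{i+1}$. In both cases $N$ is an isolating block with $\operatorname{Inv}(N)=S$. In the attracting case, $g$ contracts each $N_i$ into itself, so the only full orbit is the fixed point. In the repelling case, $g$ expands each $N_i$ by a factor exceeding $1$, so at most one point of $N_i$ can stay in $N$ forever, and that point is $p_i$.

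\textbf{Attracting case.} Here $N^-=\emptyset$, so $(N,\emptyset)$ is a filtration pair. The pointed space $N_L$ is $N$ with a disjoint base point added, and $H^*(N_L,[L])=H^*(N)=\bigoplus_{i=1}^k H^*(N_i)$. This is $\mathbb Q^k$ concentrated in degree $0$, since each $N_i$ is contractible. The map $f$ sends $N_i$ into $N_{i+1}$, so on $H^0$ the pullback sends the generator of $N_{i+1}$ to the generator of $N_i$. In the basis of components this is exactly the cyclic permutation matrix in \eqref{conleyGenMapping_attractor}.

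\textbf{Repelling case.} The exit set $N^-$ consists of points near the endpoints of the $N_i$. We take $L=\bigcup_i\bigl(N_i\setminus\operatorname{int}[p_i-\eta_i,p_i+\eta_i]\bigr)$ for suitable $\eta_i<\epsilon_i$, and check the three filtration-pair axioms using the expansion. Then $N_i/\partial$-type collapsing makes $N_L$ a wedge of $k$ circles, so $H^1\cong\mathbb Q^k$ and all other degrees vanish. The map $f_P$ sends the $i$-th circle onto the $(i+1)$-th circle with degree $\pm1$. The degree is $\sigma_i$, because $f$ is monotone on $N_i$ and preserves or reverses orientation according to the sign of $f'$; points that leave the block are collapsed to the base point. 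This gives the weighted cyclic matrix in \eqref{conleyGenMapping_repulsor}.

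In both cases the matrix is a signed permutation matrix, hence an automorphism. So $CH^*(S)$ is the whole group, the reduced index is nonzero, and therefore $h(S)\neq0$. If instead $h(S)=0$, Observation~\ref{homoClassOfPoint} would make some power of $f_P$ null-homotopic, and then its pullback on cohomology would vanish, which is impossible for an automorphism.

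\textbf{Expected obstacle.} The main delicate step is the repelling case. There we must choose $\eta_i$ so that $L$ is a neighbourhood of $N^-$ with $f(L)\cap\overline{N\setminus L}=\emptyset$, and then identify the degree of the induced self-map of the wedge of circles carefully. Doing so requires tracking how the weighted radii interact with the possibly differing derivatives $|f'(p_i)|$, since for $k>1$ an individual factor $|f'(p_i)|$ may be less than $1$ even though their product exceeds $1$. We would handle this by adapting the radii along the cycle as described above, or else by replacing the $\epsilon_i$ with an adapted metric in which $f$ expands uniformly at every step.
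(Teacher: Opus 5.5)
Your proof is correct. It shares the paper's skeleton: $L=\emptyset$ and a cyclic permutation on $H^0$ in the attracting case, and end-caps collapsed to a wedge of $k$ circles with degrees $\sigma_i$ in the repelling case. You build the filtration pair differently, though.

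\textbf{Your construction.} You weight the radii along the cycle, $\epsilon_{i+1}=\lambda_i\epsilon_i$, with each $\lambda_i$ on the appropriate side of $|f'(p_i)|$. The weights must close up, $\prod_i\lambda_i=1$, and this closing condition is exactly where $\prod_i|f'(p_i)|\neq 1$ enters. Then $f$ itself contracts or expands uniformly at every step in the scaled distance $|x-p_i|/\epsilon_i$. So $N$ is an isolating block for $f$, and $L$ is simply the end-caps (take $\eta_i=t\epsilon_i$ with $t$ small). Every filtration axiom then reduces to a one-line mean value estimate.

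\textbf{The paper's construction.} The paper works with the return map $g=f^k$. It isolates $S$ using uniform bounds on $|g'|$ and builds a filtration pair $(N,L_g)$ for $g$ componentwise. It then obtains $L$ for $f$ by saturating $L_g$ under $f^{r}$ ($0\le r\le k-1$) and $f^{-j}$ ($1\le j\le k-1$). This is a general device for passing from $f^k$ to $f$, but it makes the checks heavier. One must show that $N_i\setminus L$ is a single interval around $p_i$. One must also show that $f(L)$ misses $\overline{N\setminus L}$, which is stronger than $f(L)\subset L\cup(C\setminus N)$.

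\textbf{What your weighting makes explicit.} In the attracting case, arranging $f(N_i)\subset\operatorname{int}N_{i+1}$ ``by shrinking'' really requires adapted radii when some $|f'(p_i)|\ge 1$. In the repelling case, the right itinerary condition is the one you use: $f(N_i)\supset N_{i+1}$, with the endpoints of $N_i$ sent outside $N_{i+1}$. The paper's condition $f(N_i)\cap N\subset\operatorname{int}N_{i+1}$ for all $i$ would force $g(N_1)\subset\operatorname{int}N_1$, which is incompatible with expansion. You also supply arguments the paper only asserts: the dichotomy (via the chain rule) and $h(S)\neq0$ (no power of an automorphism vanishes, whereas a null-homotopic power would act as zero).

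\textbf{Two small fixes.} First, in the attracting case $f'(p_i)$ may vanish, so drop the constant-sign requirement there. It is only used in (R$_k$), where all $f'(p_i)\neq0$. Second, in the repelling case, make the common scale small enough that $f(N_i)$ misses every $N_j$ with $j\neq i+1$. Your isolating-block check and degree computation rely on this implicitly; without it, $f_P$ could send part of the $i$-th circle into another circle.
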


\begin{proof}
Set $g:=f^{k}$. Since $|g'(p_i)|\neq 1$ for each $i$ and $g'$ is continuous,
for each $i$ there exists $\gamma_i'>0$ such that either $|g'(x)|<1$ for all
$x\in(p_i-\gamma_i',p_i+\gamma_i')$ (attracting case), or $|g'(x)|>1$ for all
$x\in(p_i-\gamma_i',p_i+\gamma_i')$ (repelling case).

Choose $0<\epsilon_i<\gamma_i'$ so that the compact intervals
\[
N_i:=[p_i-\epsilon_i,p_i+\epsilon_i]
\]
are pairwise disjoint, and set $N:=\bigcup_{i=1}^k N_i$.
Shrinking the $\epsilon_i$ if necessary, we may also assume that
\begin{equation}\label{eq:itinerary-forcing}
f(N_i)\cap N \subset \mathrm{int}(N_{i+1})\qquad (i=1,\dots,k),
\end{equation}
with indices modulo $k$ (so $N_{k+1}=N_1$).

\medskip
We first claim that~$N$ is isolating and $\Inv_f(N)=S$ in both cases. We prove $\Inv_f(N)=S$ by working with the return map $g=f^k$.

\smallskip
\noindent\emph{(A$_k$) attracting case.}
Shrinking the $\epsilon_i$ if necessary, since $|g'(p_i)|<1$ and $g'$ is continuous,
there exists $\beta\in(0,1)$ such that
\[
|g'(x)|\le 1-\beta\qquad\text{for all }x\in N.
\]
Fix $i$ and $x\in N_i$. By the mean value theorem,
\[
|g(x)-g(p_i)|\le (1-\beta)|x-p_i|<|x-p_i|.
\]
Since $g(p_i)=p_i$, we obtain $g(N_i)\subset \mathrm{int}(N_i)$ for every $i$.

Now let $x\in\Inv_f(N)$, and choose an  orbit $(x_n)_{n\in\mathbb{Z}}\subset N$
with $f(x_n)=x_{n+1}$ and $x_0=x$. Define $y_m:=x_{mk}$ for $m\in\mathbb{N}$.
Then $y_{m+1}=g(y_m)$ and $(y_m)_{m\in\mathbb{Z}}\subset N$ is a full $g$-orbit.
If $y_m\in N_i$ set $\delta_m:=|y_m-p_i|$. Then
\[
\delta_{m+1}=|g(y_m)-p_i|\le (1-\beta)\delta_m.
\]

Iterating backwards gives $\delta_0\le (1-\beta)^{n}\delta_{-n}$ for all $n\ge 1$,
hence letting $n\to\infty$ yields $\delta_0=0$, so $y_0=p_i$. Therefore
$x=x_0$ lies on the $k$-cycle $S$. This proves $\Inv_f(N)=S$, so $N$ is an
isolating neighbourhood for $S$.

\smallskip
\noindent\emph{(R$_k$) repelling case.}
For each $i$, after shrinking $\gamma_i'$ if necessary we have
$|g'(x)|>1$ on $(p_i-\gamma_i',p_i+\gamma_i')$, hence $g'(x)\neq 0$ there.
Therefore $g'$ has constant sign on that interval and $g$ is monotone on it.

Shrinking the $\epsilon_i$ if necessary, since $|g'(p_i)|>1$ and $g'$ is continuous,
there exists $\beta>0$ such that
\[
|g'(x)|\ge 1+\beta\qquad\text{for all }x\in N.
\]

Assume $x\in N\setminus S$ and that there exists a full orbit
$(x_n)_{n\in\mathbb{Z}}\subset N$ with $x_0=x$.
As above set $y_m:=x_{mk}$, so $y_{m+1}=g(y_m)$ and $(y_m)\subset N$.
Pick $m$ and $i$ with $y_m\in N_i$, write $y_m=p_i+\alpha$ with $\alpha\neq 0$,
and define $\delta_m:=|y_m-p_i|=|\alpha|$. By the mean value theorem and the
lower bound on $|g'|$,
\[
\delta_{m+1}=|g(y_m)-p_i|=|g(y_m)-g(p_i)|\ge (1+\beta)|y_m-p_i|=(1+\beta)\delta_m.
\]

Iterating forward yields $\delta_{m+n}\ge (1+\beta)^n\delta_m\to\infty$ as $n\to\infty$,
contradicting $y_{m+n}\in N$ for all $n$ (since $N$ is bounded). Hence no such
full orbit exists for any $x\in N\setminus S$, so $\Inv_f(N)=S$, and $N$ is
isolating for $S$.

\medskip
\noindent\textit{Filtration pair and index in the attracting case.}
Assume (A$_k$). Since $f$ is continuous and $f(p_i)=p_{i+1}$, we may (after
shrinking the $\epsilon_i$ if necessary) arrange
\[
f(N_i)\subset \mathrm{int}(N_{i+1})\qquad (i=1,\dots,k),
\]
where $N_{k+1}=N_1$. Then $f(N)\subset \mathrm{int}(N)$, so the
exit set of $N$ is empty and we may take $L=\emptyset$; thus $(N,\emptyset)$ is a
filtration pair for $S$.

The pointed space $N/\emptyset$ is $N$ with a disjoint basepoint; since $N$ has
$k$ connected components, the reduced cohomology satisfies
\[
\widetilde H^{0}(N/\emptyset;\mathbb{Q})\cong \bigoplus_{i=1}^{k}\mathbb{Q},
\qquad
\widetilde H^{q}(N/\emptyset;\mathbb{Q})=0\ \text{ for }q\ge 1.
\]

Because $f$ maps the $i$-th component into the $(i+1)$-th, the pullback
$f^{*}$ on $\widetilde H^{0}$ cyclically permutes the basis in the
\emph{reverse} direction (mapping the generator of $N_{i+1}$ to the generator of $N_i$),
yielding the permutation matrix in \eqref{conleyGenMapping_attractor}.

\medskip
\noindent\textit{Filtration pair and index in the repelling case.}
Assume (R$_k$). In this case $|g'(p_i)|>1$ implies $g'(p_i)\neq 0$, so
\[
g'(p_i)=\prod_{j=0}^{k-1} f'(p_{i+j})\neq 0,
\]
and hence $f'(p_j)\neq 0$ for every point $p_j$ on the periodic orbit. By
continuity of $f'$, after shrinking the $\epsilon_i$ if necessary we may assume
that $f'$ has constant sign and no zeros on each $N_i$, so that $f$ is strictly
monotone on $N_i$ and $\sigma_i=\mathrm{sign}(f'(p_i))\in\{\pm1\}$ is well-defined.

We first build a filtration pair for the return map $g$ on each
component $N_i$ (where $g(p_i)=p_i$).

Fix $i$. Since $g$ is monotone on $N_i$ and $|g'(x)|\ge 1+\beta$ on $N_i$,
there exist points $q_{i,g}^{-}\in(p_i-\epsilon_i,p_i)$ and
$q_{i,g}^{+}\in(p_i,p_i+\epsilon_i)$ such that
\[
\{g(q_{i,g}^{-}),\, g(q_{i,g}^{+})\}=\{p_i-\epsilon_i,\, p_i+\epsilon_i\},
\]
and the set
\[
N_{i,g}^{-}:=[p_i-\epsilon_i,q_{i,g}^{-}]\cup[q_{i,g}^{+},p_i+\epsilon_i]
\]
satisfies $g(N_{i,g}^{-})\cap \mathrm{int}(N_i)=\emptyset$ while
$g(N_i\setminus N_{i,g}^{-})\subset \mathrm{int}(N_i)$. Let
\[
N_g^{-}:=\bigcup_{i=1}^{k}N_{i,g}^{-}.
\]

We can enlarge $N_g^{-}$ to a compact neighbourhood
$L_g\subset N$ such that $(N,L_g)$ is a filtration pair for $S$ with respect to
$g$, i.e.
\begin{equation}\label{eq:g-filtration}
g(L_g)\cap \overline{N\setminus L_g}=\emptyset,
\qquad
\Inv_g(\overline{N\setminus L_g})=S.
\end{equation}

We now define a filtration pair for $f$. Set
\begin{equation}\label{eq:L-from-Lg-fixed}
L:=\Bigl(\; \bigcup_{r=0}^{k-1}\bigl(f^{r}(L_g)\cap N\bigr)\ \cup\
\bigcup_{j=1}^{k-1}\bigl(f^{-j}(L_g)\cap N\bigr)\;\Bigr).
\end{equation}

Then $L$ is compact. In particular, $L$ contains a neighbourhood of the $g$-exit set
and of its forward and backward $f$-iterates up to time $k-1$.

We check the filtration pair property.

\smallskip
\noindent\emph{(i) $f(L)\cap \overline{N\setminus L}=\emptyset$.}
Let $x\in L$.
If $x\in f^{-j}(L_g)\cap N$ for some $1\le j\le k-1$, then $f(x)\in f^{-(j-1)}(L_g)\cap N\subset L$.
If $x\in f^{r}(L_g)\cap N$ for some $0\le r\le k-2$, then $f(x)\in f^{r+1}(L_g)\cap N\subset L$.
Finally, if $x\in f^{k-1}(L_g)\cap N$, write $x=f^{k-1}(z)$ with $z\in L_g$; then
$f(x)=f^{k}(z)=g(z)$. If $f(x)\in N$, then by \eqref{eq:g-filtration} we have
$g(z)\notin \overline{N\setminus L_g}$, hence $g(z)\in L_g\subset L$; otherwise $f(x)\notin N$.
In all cases, $f(x)\notin \overline{N\setminus L}$, proving (i).

\smallskip
\noindent\emph{(ii) $\Inv_f(\overline{N\setminus L})=S$.}
Let $x\in \Inv_f(\overline{N\setminus L})$ and take a full orbit
$(x_n)_{n\in\mathbb{Z}}\subset \overline{N\setminus L}$ with $x_0=x$.
In particular $x_{mk}\notin L_g$ for all $m\in\mathbb{Z}$ (since $L_g\subset L$).
Define $y_m:=x_{mk}$. Then $y_{m+1}=g(y_m)$ and $(y_m)_{m\in\mathbb{Z}}\subset
\overline{N\setminus L_g}$ is a full $g$-orbit. Hence $y_0\in \Inv_g(\overline{N\setminus L_g})=S$
by \eqref{eq:g-filtration}, so $x=x_0$ lies on the cycle $S$. Thus
$\Inv_f(\overline{N\setminus L})=S$.

Therefore $(N,L)$ is a filtration pair for $S$ with respect to $f$.

Moreover, by the construction of $L$, on each component $N_i$ the set $L\cap N_i$
contains intervals covering a neighbourhood of both endpoints of $N_i$, so that
$N_i\setminus L$ is a single open interval. Collapsing $L$ to a point, each $N_i$
therefore produces a circle, and we obtain
\[
  N/L \simeq \bigvee_{i=1}^{k}S^{1}.
\]

Let $e_i$ denote the standard basis of $H^{1}(N/L;\mathbb{Q})\cong\bigoplus_{i=1}^k\mathbb{Q}$
dual to the $i$-th circle. Since $f$ is monotone near $p_i$ and maps the
$i$-th component to the $(i+1)$-th with degree $\sigma_i=\mathrm{sign}(f'(p_i))$,
the pullback on cohomology satisfies
\[
f^{*}(e_{i+1})=\sigma_i\, e_i,\qquad \sigma_i=\mathrm{sign}(f'(p_i))\in\{\pm 1\},
\]
(with indices taken modulo $k$). Hence, in the basis $(e_1,\dots,e_k)$, the index map is
represented by the matrix in \eqref{conleyGenMapping_repulsor}.
\end{proof}

\subsection{Random Conley Index}

Throughout all this section, the metric space $(X,d_X)$ is assumed to be Polish \footnote{Separable and complete.}. For $x\in X$, $A,B\subset X$ fixed, define $\text{dist}_X(x,B) = \inf_{y\in B}\{d_X(x,y)\}$ and $\text{dist}_X(A,B) = \inf_{x\in A}\{\text{dist}_X(x,B)\}$. 

\begin{definition} \cite{liu2008conley}
    A map from $\Omega$ to $\mathcal{P}(X) = \{A\,|\,A\subset X\}$ is called a \textbf{multifunction} from $\Omega$ to $X$. A multifunction $D$ is a \textbf{random set} if the map $\omega \mapsto \text{dist}_X(x,D(\omega))$ is measurable for all $x\in X$. In this case, we will also say that $D$ is measurable.   
    
    A measurable multifunction $D$ such that $\omega \mapsto D(\omega)$ takes values only in closed (respectively, compact) subsets of $X$ is called a \textbf{random closed set} (respectively, \textbf{random compact set}). A multifunction $U$ such that $\omega\mapsto U(\omega)$ takes value only in open sets is called a \textbf{random open set} if $\omega \mapsto U^C(\omega)$ is a random closed set. 
\end{definition}

The following definitions introduce the necessary components for constructing the random Conley index. These are provided here to make this work as self-contained as possible, though for a comprehensive treatment of this topic, refer to \cite{liu2008conley}. All of the forthcoming definitions can be found in Section 3 of that reference.

\begin{definition}
Let $\phi\colon \mathbb{N}\times\Omega\times X\to X$ be a random dynamical system
and let $N=(N(\omega))_{\omega\in\Omega}$ be a random compact subset of $X$.
For a given $\omega\in\Omega$, a sequence $(x_n)_{n\in\mathbb{Z}}$ taking values in~$X$ is called a
\textbf{full orbit in $N$ over $\omega$} if
\[
  x_n\in N(\theta^n\omega)\quad\text{and}\quad
  \phi\bigl(1,\theta^n\omega,x_n\bigr)=x_{n+1}
  \quad\text{for all }n\in\mathbb{Z}.
\]
The \textbf{maximal random invariant set} in $N$ is then defined by
\[
  \Inv(N,\phi)(\omega)
  :=\bigl\{x\in N(\omega)\,:\,\exists\text{ a full orbit }
  (x_n)_{n\in\mathbb{Z}}\text{ in }N\text{ over }\omega
  \text{ with }x_0=x\bigr\}.
\]
Equivalently, $\Inv(N,\phi)$ is the largest random invariant set contained in
$N$ with respect to inclusion.
\end{definition}

If the context makes it clear which dynamical system is being referred to, $\text{Inv}(N, \phi)(\omega)$ may be denoted simply by $\text{Inv}(N)(\omega)$. 

\begin{definition}[Random invariant set and random isolating neighbourhood]
    A random compact set $N$ is said to be a \textbf{random isolating neighbourhood} if it satisfies $\text{Inv}(N,\phi)\subset \text{int}(N)$. A random set $S$ is called a \textbf{random isolated invariant set} if there exists a random isolating neighbourhood $N$ such that $S(\omega) = \text{Inv}(N,\phi)(\omega)$ $\mathbb{P}$-a.s. for all $\omega \in \Omega$. 
\end{definition}

\begin{definition}[Random isolating block and exit set ]
Let $\varphi$ be a (discrete-time) random dynamical system on a compact metric space $X$ over
$(\Omega,\mathcal F,\mathbb P,\theta)$, and write the time--one map as
\[
\varphi(\omega,\cdot):=\varphi(1,\omega,\cdot)\colon X\to X.
\]
A random compact set $N(\omega)$ is called a \textbf{random isolating block} if, for
$\mathbb P$-a.e.\ $\omega\in\Omega$,
\begin{equation}\label{eq:random-isolating-block}
\varphi(\theta_{-1}\omega,\,N(\theta_{-1}\omega))
\;\cap\; N(\omega)
\;\cap\; \varphi(\omega,\cdot)^{-1}\!\bigl(N(\theta\omega)\bigr)
\;\subset\; \operatorname{int} N(\omega),
\end{equation}
where
\[
\varphi(\omega,A):=\{\varphi(\omega,x):x\in A\},\qquad
\varphi(\omega,\cdot)^{-1}(A):=\{x\in X:\varphi(\omega,x)\in A\}.
\]

Given a random isolating neighbourhood (in particular, a random isolating block) $N(\omega)$,
its \textbf{random exit set} is
\begin{equation}\label{eq:random-exit-set}
N^{-}(\omega):=\Bigl\{x\in N(\omega)\;:\;\varphi(\omega,x)\notin \operatorname{int}N(\theta\omega)\Bigr\},
\qquad \mathbb P\text{-a.s.}
\end{equation}
\end{definition}

Let $\phi$ be a discrete random dynamical system generated by a random map $\varphi$. It follows that every random isolating block is also a random isolating neighbourhood, as noted in Remark 3.2 of \cite{liu2008conley}. The next definitions can also be found in Section 4 of \cite{liu2008conley}.

\begin{definition}
    \textbf{(Random Filtration Pair)} Let $N$ be a random isolating neighbourhood, $L(\omega)\subset N(\omega)\mathbb{P}\text{-a.s.}$ a compact random set  and $S(\omega)$ a random invariant isolated set inside $N(\omega)$. Assume that $N(\omega) = \text{cl(int($N(\omega$))}$ and $L(\omega) = \text{cl(int($L(\omega$))}$. We say that $(N(\omega), L(\omega))$ is a \textbf{random filtration pair} for $S$ if the following conditions are satisfied $\mathbb{P}$-a.s.:

    \begin{itemize}
        \item cl(int($N(\omega)\setminus L(\omega)$)) is a random isolating neighbourhood of $S(\omega)$,
        \item $L(\omega)$ is a random neighbourhood of $N^-(\omega)$ in $N(\omega)$,
        \item $\varphi(\omega, L(\omega))\cap\text{cl}(N\setminus L)(\theta \omega)=\emptyset$.
    \end{itemize}
\end{definition}

The following definition introduces a metric to measure the distance between random maps. This metric is essential for ensuring that the random Conley index remains invariant under small perturbations, as it provides a precise framework for determining what constitutes a ``small'' change.

\begin{definition}
    Consider two random maps $\varphi(\omega)$ and $\psi(\omega)$ and $N(\omega)$ a random compact set. Define:

    \begin{equation}
        d^N_\omega(\varphi,\psi) = \sup_{z\in N(\theta_{-1}\omega)}d_X(\varphi(\theta_{-1}\omega)(z), \psi(\theta_{-1}\omega)(z))
    \end{equation}

    The function $d^N_{\cdot}(\cdot, \cdot)$ is called a \textbf{random metric} in $\mathcal{M}$ and $(\mathcal{M}, d^N)$ is called a \textbf{random metric space}. A set $\mathcal{N}$ is called a $C^0$-random neighbourhood of $\varphi \in \mathcal{M}$ if there exists a random variable $r>0$ such that:

    \begin{equation}
        B_r(\varphi)=\{f\in \mathcal{M}\,|\,d_{\omega}^N(f,\varphi)<r(\omega),\mathbb{P}\text{-a.s.}\}\subset\mathcal{N}
    \end{equation}

    The pair $(\mathcal{M}, \tau)$ is called $C^0$\textbf{-random topology} if the topology $\tau$ in $\mathcal{M}$ is the one induced of the $C^0$-random neighbourhood. 
\end{definition}

The following result, Theorem 4.1 from \cite{liu2008conley}, serves a similar purpose to Theorem \ref{existenceFiltPair}, but in the context of random dynamical systems. It thus validates the previous definitions by ensuring the existence of the corresponding objects.

\begin{thm} \cite{liu2008conley}\label{ingredient1}
    Suppose that $N(\omega)$ is a random isolating block and $L(\omega)$ is a random compact isolating neighbourhood of $N^-(\omega)$ in $N(\omega)$ sufficiently small. Then, $(N,L)(\omega)$ is a random filtration pair for $\text{Inv}(N,\varphi)(\omega)$. 

    Besides that, there exists a $C^0$-random neighbourhood of $\varphi$ in $\mathcal{M}$ such that, for any random maps $\psi$ in this neighbourhood, it follows that $S_\psi = \text{Inv}(N\setminus L, \psi)$ is a random isolated invariant set for $\psi$ and $(N,L)$ is a random filtration pair for $S_{\psi}$. 
\end{thm}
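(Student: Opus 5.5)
The plan is to run the deterministic argument of Franks--Richeson behind Theorem \ref{existenceFiltPair} fibre by fibre. All ``sufficiently small'' choices are encoded as positive random variables, and a separate step checks that every object built this way is a random compact set. Throughout, write $\varphi_\omega:=\varphi(\omega,\cdot)$ and $N_\omega:=N(\omega)$.

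\emph{Step 1: the exit set is a random compact set and is positively invariant relative to $N$.} Since $\varphi_\omega$ is continuous and $X\setminus\operatorname{int}N_{\theta\omega}$ is closed, $N^-_\omega=N_\omega\cap\varphi_\omega^{-1}(X\setminus \operatorname{int}N_{\theta\omega})$ is compact. Measurability of $\omega\mapsto \operatorname{dist}(x,N^-_\omega)$ should follow from the joint measurability of $\varphi$ together with a Castaing representation of $N$. The key relation is
\[
\varphi_\omega(N^-_\omega)\cap N_{\theta\omega}\subset N^-_{\theta\omega}.
\]
To see it, take $x\in N^-_\omega$ with $y=\varphi_\omega(x)\in N_{\theta\omega}$. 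Then $y\notin\operatorname{int}N_{\theta\omega}$. If $\varphi_{\theta\omega}(y)\in N_{\theta^2\omega}$, the block condition \eqref{eq:random-isolating-block} at $\theta\omega$ would force $y\in\operatorname{int}N_{\theta\omega}$, a contradiction. Hence $\varphi_{\theta\omega}(y)\notin N_{\theta^2\omega}$, so $y\in N^-_{\theta\omega}$. The same argument shows that $\operatorname{Inv}(N,\varphi)(\omega)\cap N^-_\omega=\emptyset$, because a full orbit never leaves $\operatorname{int}N$.

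\emph{Step 2: choosing $L$.} Put
\[
L^\delta_\omega:=\{x\in N_\omega:\operatorname{dist}(x,N^-_\omega)\le\delta(\omega)\},
\]
regularised to be the closure of its interior. We need a random variable $\delta>0$ such that the following hold a.s.
\begin{enumerate}
\item[(a)] $L^\delta_\omega\cap \operatorname{Inv}(N)(\omega)=\emptyset$. Both sets are compact and disjoint, so this only requires $\delta(\omega)$ below their distance.
\item[(b)] $\varphi_\omega(L^\delta_\omega)\cap\operatorname{cl}(N\setminus L^\delta)_{\theta\omega}=\emptyset$.
\end{enumerate}
For (b), points of $\varphi_\omega(N^-_\omega)$ either lie outside $N_{\theta\omega}$, at positive distance by compactness, or lie in $N^-_{\theta\omega}$ by Step 1, and hence deep inside $L^{\delta}_{\theta\omega}$. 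By uniform continuity of $\varphi_\omega$ on the compact $N_\omega$, a small neighbourhood of $N^-_\omega$ is mapped into the $\delta(\theta\omega)/2$-neighbourhood of $\varphi_\omega(N^-_\omega)$.

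This is the step I expect to be the main obstacle. Condition (b) couples $\delta(\omega)$ with $\delta(\theta\omega)$, and the moduli of continuity are random, so there is no uniform choice. I would first try a two-scale construction: a family $L^\delta$ is used only through its inner and outer collars, so the requirement splits into $\delta(\omega)\le\eta_\omega(\delta(\theta\omega)/2)$, where $\eta_\omega$ is the modulus of continuity of $\varphi_\omega$ on $N_\omega$. If solving this recursively along orbits is not measurable or not positive, I would fall back on the Franks--Richeson device of taking $L$ as a compact neighbourhood of $N^-_\omega\cup\bigl(\varphi_{\theta^{-1}\omega}(L_{\theta^{-1}\omega})\cap N_\omega\bigr)$, built from $N^-$ and a finite collar. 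In either approach measurability of $\delta$ comes from expressing it through countable infima over a dense sequence of $X$.

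\emph{Step 3: the filtration-pair axioms and continuation.} With (a) and (b) in hand, the axioms are checked fibrewise. $L$ is a neighbourhood of $N^-$ by construction. Every full orbit in $\operatorname{cl}(N\setminus L)$ is a full orbit in $N$, so it lies in $\operatorname{Inv}(N)$, which by (a) sits in the interior of $\operatorname{cl}(N\setminus L)$; this gives isolation. The third axiom is (b).

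For the perturbation statement, every condition used is an open condition with a positive random margin. These margins are: the gap in (b); the distance from $\operatorname{Inv}(N,\varphi)$ to $\partial N$ and to $L$; and the strictness of the block inclusion on the compact complement of $\operatorname{int}N$, taken through the relevant preimages. Let $r(\omega)$ be a fixed fraction of the minimum of these margins evaluated at $\theta_{-1}\omega$, consistent with the definition of $d^N_\omega$. Then any $\psi$ with $d^N_\omega(\psi,\varphi)<r(\omega)$ a.s. keeps $\operatorname{Inv}(\operatorname{cl}(N\setminus L),\psi)$ inside the interior, keeps $\psi_\omega(L_\omega)$ disjoint from $\operatorname{cl}(N\setminus L)_{\theta\omega}$, and keeps $L$ a neighbourhood of the new exit set. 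Hence $(N,L)$ is a random filtration pair for $S_\psi$.
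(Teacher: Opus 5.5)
The paper gives no proof of this statement; it imports it from Liu (Theorem~4.1 of \cite{liu2008conley}), the random version of the Franks--Richeson lemma you are reconstructing fibrewise. Measured against that standard argument, your proposal has a genuine gap exactly where you suspected: Step~2(b) is never established, and neither of your suggested repairs works.

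The constraint $\delta(\omega)\le\eta_\omega(\delta(\theta\omega)/2)$ is not merely a measurability nuisance; in general it has no positive solution. Take trivial noise, $f(x)=2x$ and $N=[-1,1]$. Then $N^-=[-1,-\tfrac12]\cup[\tfrac12,1]$ and $\eta(s)=s/2$, so your constraint reads $\delta\le\delta/4$. Yet $L^\delta$ satisfies (b) for every $\delta<\tfrac12$.

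The fallback, which builds $L_\omega$ from $L_{\theta^{-1}\omega}$, is an infinite backward recursion with no starting point. It also would not yield the claim that \emph{every} sufficiently small neighbourhood of $N^-$ works.

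The coupling is an artefact of using only $\varphi_\omega(N^-_\omega)\cap N_{\theta\omega}\subset N^-_{\theta\omega}$ together with the depth $\delta(\theta\omega)$ of $N^-$ inside $L^\delta_{\theta\omega}$. Relatedly, the points of $\varphi_\omega(N^-_\omega)$ lying outside $N_{\theta\omega}$ are not at positive distance from $N_{\theta\omega}$ ``by compactness''. In the example, $(1,2]$ accumulates at $1\in N$.

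The missing idea is already in your Step~1, where you in fact proved the stronger inclusion
\[
\varphi_\omega(N^-_\omega)\cap N_{\theta\omega}\subset E_{\theta\omega}:=\{y\in N_{\theta\omega}:\varphi_{\theta\omega}(y)\notin N_{\theta^2\omega}\}.
\]
The set $E_{\theta\omega}$ is relatively open in $N_{\theta\omega}$ and contained in $N^-_{\theta\omega}$, so it is disjoint from $\mathrm{cl}(N_{\theta\omega}\setminus N^-_{\theta\omega})$. Hence the compact sets $\varphi_\omega(N^-_\omega)$ and $\mathrm{cl}(N_{\theta\omega}\setminus N^-_{\theta\omega})$ are disjoint, with some gap $g(\omega)>0$. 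Since $L_{\theta\omega}\supset N^-_{\theta\omega}$ gives $\mathrm{cl}(N\setminus L)_{\theta\omega}\subset\mathrm{cl}(N_{\theta\omega}\setminus N^-_{\theta\omega})$, condition (b) holds for every $L_\omega$ lying between $N^-_\omega$ and the set on which uniform continuity keeps $\varphi_\omega$-images within $g(\omega)/2$ of $\varphi_\omega(N^-_\omega)$. This choice of $\delta(\omega)$ does not involve $L_{\theta\omega}$ at all.

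Step~3 needs the same kind of repair. The distance from $\Inv(N,\varphi)$ to $\partial N$ and to $L$ says nothing about $\Inv(\cdot,\psi)$ without a multi-step compactness argument over an unbounded, random number of fibres. A single radius $r(\omega)$, which controls only $\psi$ at $\theta^{-1}\omega$, does not supply that. Argue in one step instead:
\begin{enumerate}
\item Let $D_\omega:=\partial N_\omega\setminus\operatorname{int}_{N_\omega}L_\omega$, where $\operatorname{int}_{N_\omega}$ denotes the interior relative to $N_\omega$. This set is compact and consists of non-exit points, so by the block inequality it misses $\varphi_{\theta^{-1}\omega}(N_{\theta^{-1}\omega})$.
\item If $d^N_\omega(\psi,\varphi)$ is below that gap and $\psi_\omega(L_\omega)$ still misses $\mathrm{cl}(N\setminus L)_{\theta\omega}$, then a full $\psi$-orbit in $\mathrm{cl}(N\setminus L)$ can meet neither $L$ nor $\partial N$. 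It therefore lies in $\operatorname{int}N\setminus L$.
\item $L$ remains a neighbourhood of the $\psi$-exit set, because $\varphi_\omega(N_\omega\setminus\operatorname{int}_{N_\omega}L_\omega)$ is a compact subset of $\operatorname{int}N_{\theta\omega}$.
\end{enumerate}
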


The definition of random shift equivalence is made in \cite{liu2008conley}; more specifically in Definition 5.1 of that work. We will omit it from here in order for our work to be concise; we will also omit the definition of random homotopical equivalence and the definition of the random shift equivalence class of the random pointed space applications; they can be found again in \cite{liu2008conley} precisely in Definitions 6.1 and in a discussion in Section 6 of the same reference. 

\begin{definition}
    Suppose that $P(\omega) = (N,L)(\omega)$ is a random filtration-pair for $\varphi(\omega)$. Let $N_L(\omega)$ be the quotient space $N\setminus L (\omega)$, where $N_L(\omega) = ((N\setminus L)(\omega)\cup[L(\omega)], [L(\omega)])$ for all $\omega \in \Omega$. If $L(\omega)=\emptyset$, then $N_L(\omega) = N(\omega)\cup [\emptyset]$. We identify $N_L\setminus [L]$ with $N\setminus L$, where the class $L[\omega]$ is defined by the equivalence relation $x\sim y$ if and only if $x,y\in L(\omega)$. 

    The random set $N_L(\omega)$ is called a \textbf{random quotient space} if $N_L(\omega) = ((N\setminus L)(\omega)\cup [L(\omega)],[L(\omega)])$ for all $\omega\in \Omega$ regardless of the choice of the filtration pair $P(\omega) = (N,L)(\omega)$. 

    A map $\varphi_P(\omega):N_L(\omega)\rightarrow N_L(\theta \omega)$ is called a \textbf{random map of the pointed space} associated to $P$ if $\varphi_P(\omega):N_L(\omega)\rightarrow N_L(\theta \omega)$ satisfies:

    \begin{equation}
        \varphi_P(\omega)(x) = \begin{cases}
            [L(\theta \omega)], \text{ if }x=[L(\omega)] \text{ or }\varphi(\omega)(x)\notin N(\theta \omega) \\
        p(\theta \omega)(\varphi(\omega)(x)), \text{ otherwise}
        \end{cases}
    \end{equation}
where $p(\omega):N(\omega)\rightarrow N_L(\omega)$ is a random quotient map where, for each $\omega \in \Omega$ and $p(\omega):N(\omega)\rightarrow N_L(\omega)$:

\begin{equation}
    p(\omega)(x) = \begin{cases}
        x, \text{ if }x\in (N\setminus L)(\omega) \\
        [L(\omega)], \text{ if }x\in L(\omega)
    \end{cases}
\end{equation}
\end{definition}

\begin{thm}
    Suppose that $P\dw=(N,L)\dw$ and $P'\dw = (N',L')\dw$ are two random filtration pairs for $S\dw$. Then, the induced maps $\varphi_P:N_L\dw \rightarrow N_L\dw$ and $\varphi_{P'}:N'_{L'}\rightarrow N'_{L'}$ are random shift equivalents. 
\end{thm}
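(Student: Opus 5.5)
The statement to prove is the last theorem: any two random filtration pairs $P=(N,L)$ and $P'=(N',L')$ for the same random isolated invariant set $S$ induce random pointed-space maps $\varphi_P$ and $\varphi_{P'}$ that are random shift equivalent. I would adapt the deterministic argument of Theorem 4.3 of \cite{franks2000shift} fibrewise. The randomness enters only through the shift $\omega\mapsto\theta\omega$, which has to be tracked in every composition. The plan is to build, for a suitable integer $m\ge 1$, random maps
\[
r(\omega)\colon N_L(\omega)\to N'_{L'}(\theta^m\omega),\qquad s(\omega)\colon N'_{L'}(\omega)\to N_L(\theta^m\omega).
\]
Both are induced by the time-$m$ map $\phi(m,\omega,\cdot)$, sending a point to its image when that image is well defined and to the base point otherwise. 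One then checks three things:
\begin{enumerate}
\item $r(\theta\omega)\circ\varphi_P(\omega)=\varphi_{P'}(\theta^m\omega)\circ r(\omega)$, and the analogous relation for $s$;
\item $s(\theta^m\omega)\circ r(\omega)=\varphi_P^{2m}(\omega)$ and $r(\theta^m\omega)\circ s(\omega)=\varphi_{P'}^{2m}(\omega)$;
\item $r$ and $s$ are measurable in $\omega$.
\end{enumerate}
The second item gives the compositional requirement of random shift equivalence in the sense of Definition 5.1 of \cite{liu2008conley}.

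The first step is a uniform ``return time'' lemma. Because $S(\omega)=\Inv(\overline{N\setminus L})(\omega)=\Inv(\overline{N'\setminus L'})(\omega)$, I expect the following to hold for $m$ large. If $x\in N(\omega)\setminus L(\omega)$ and its orbit stays in $N$ for $2m$ steps, then $\phi(j,\omega,x)\in N'(\theta^j\omega)\setminus L'(\theta^j\omega)$ for $j$ near $m$. Otherwise, at some intermediate time the orbit enters $L$ or leaves $N$. The symmetric statement should hold with $P$ and $P'$ exchanged. Deterministically this is a compactness argument: if no such $m$ existed, a limit of longer and longer orbit segments would produce a full orbit in $\overline{N\setminus L}$ that is not in $\overline{N'\setminus L'}$, contradicting the equality of the invariant sets. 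Randomly, compactness only works fibre by fibre, so I would first obtain a measurable $m(\omega)$ and then replace it by a constant. Two routes seem available. One is to take $m$ uniform, which should be possible in our application, since $\varphi$ is $C^0$-close to $f$ uniformly in $\omega$ by Definition \ref{def:R1eps-simple} and the pairs can be taken deterministic. The other is to allow a random $m$, as \cite{liu2008conley} permits.

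Given the lemma, continuity of $r$ and $s$ at the base point comes from the filtration property $\varphi(\omega,L(\omega))\cap \overline{N\setminus L}(\theta\omega)=\emptyset$ together with $L$ being a neighbourhood of the exit set. Points near $L$ are sent into $L'$, or out of $N'$, and hence to the base point. The commuting relations and the composition identities are then bookkeeping with the cocycle property \eqref{posCocycleComp}: both sides equal either $p(\theta^{2m}\omega)(\phi(2m,\omega,x))$ or the base point, and the lemma guarantees that the two sides fall into the same case. Measurability of $r$ and $s$ follows from joint measurability of $\phi$ and from $N,L,N',L'$ being random compact sets.

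The main obstacle is the uniform choice of $m$ in the return-time lemma, since without a fixed $m$ the maps do not even have well-defined target fibres. I would first try to avoid the general difficulty by citing Section 5 of \cite{liu2008conley}, where this theorem is proved. For self-containment in our setting of Theorem \ref{thm:main_forcing}, I would restrict to deterministic filtration pairs and perturbations that are uniformly close, for which the deterministic compactness argument goes through uniformly in $\omega$.
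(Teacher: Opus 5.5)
The paper does not prove this statement. It is quoted from \cite{liu2008conley}, as the random counterpart of Theorem~4.3 of \cite{franks2000shift}. So the fallback you end with, citing Section~5 of \cite{liu2008conley}, is exactly what the paper does. The self-contained argument you sketch before that fallback, however, has two genuine gaps.

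\textbf{The identities and continuity are not bookkeeping.} The relations $r(\theta\omega)\circ\varphi_P(\omega)=\varphi_{P'}(\theta^m\omega)\circ r(\omega)$ and $s(\theta^m\omega)\circ r(\omega)=\varphi_P^{2m}(\omega)$, and even continuity of $r,s$, need real work.

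\emph{What your lemma controls.} Your return-time lemma only places the \emph{middle} of a long orbit segment near $S$. (Its hypothesis should also be that the segment stays in $\overline{N\setminus L}$, not merely in $N$, since $\Inv(N)$ can be larger than $S$.) Whether a given side equals the base point is decided by where the orbit sits relative to $N\setminus L$ and $N'\setminus L'$ near the \emph{ends} of the segment. That is far from $S$, where the lemma says nothing.

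\emph{Continuity fails in a simple example.} Take the trivial RDS generated by a map of $[-2,2]$ equal to $x\mapsto 2x$ on $[-1,1]$. Let $S=\{0\}$, with filtration pairs $N=[-1,1]$, $L=\{0.4\le|x|\le 1\}$ and $N'=[-0.1,0.1]$, $L'=\{0.04\le|x|\le 0.1\}$. Consider the rule ``$s(y)=[2^m y]$ while $y,\dots,2^m y\in N'\setminus L'$, base point otherwise''. It jumps at $|2^m y|=0.04$ from $[\pm 0.04]\neq[L]$ to $[L]$, because $\{|x|=0.04\}$ lies in the interior of $N\setminus L$.

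\emph{Exact commutation fails in general.} Other cut-offs can repair this particular example, but not the general situation. Take a trivial RDS generated by a deterministic map $f$, and the rule $r(x)=p'(f^m(x))$ if $x,\dots,f^m(x)\in N\setminus L$ and $f^m(x)\in N'$, base point otherwise. Suppose $u$ has a backward orbit $(u_{-j})_{j\ge0}$ in $N\setminus L$, with $u\notin N'$ and $f(u)\in (N\setminus L)\cap(N'\setminus L')$. This is an unstable branch re-entering $N'$, which nothing in the definition of a filtration pair forbids. Then $r(\varphi_P(u_{-m}))=[f(u)]\neq[L']=\varphi_{P'}(r(u_{-m}))$ for every $m$.

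\emph{What actually has to be shown.} The index is the shift-equivalence class of the random homotopy class $h_P(S,\varphi)=[\varphi_P]$. So you need these relations up to random homotopy, or exactly for a more carefully designed pair $r,s$. That is the actual content of the theorem, and your sketch replaces it by an assertion.

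\textbf{The choice of $m$ is not resolved.}
\begin{itemize}
\item In a fixed fibre, the compactness argument does not produce $m(\omega)$. The middle point $\phi(m,\omega,x)$ lies in $N(\theta^m\omega)$, a fibre that moves with $m$. Limits of starting points in $N(\omega)$ only give a point of the stable set, not a contradiction.
\item Fixing the middle fibre instead gives a bound depending on $\theta^m\omega$, which is circular.
\item A random $m$ is not available in the form you wrote the relations. $r(\theta\omega)\circ\varphi_P(\omega)$ lands in the fibre over $\theta^{m(\theta\omega)+1}\omega$, while $\varphi_{P'}(\theta^{m(\omega)}\omega)\circ r(\omega)$ lands over $\theta^{m(\omega)+1}\omega$. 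You would need $m\circ\theta=m$, and an unbounded measurable $m$ cannot simply be replaced by a constant.
\item Your remaining route uses deterministic pairs and uniformly $C^0$-close perturbations. There a pseudo-orbit compactness argument does give a uniform $m$, but this proves only a special case.
\end{itemize}
For the statement as given, you have to rely on \cite{liu2008conley}, as the paper does.
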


\begin{definition}
    Let $C$ be a random pointed space and $f$ a random map that preserves the base point such that $f\dw:C\dw\rightarrow C(\theta_n\omega)$ is continuous and $f(\cdot)(x)$ is measurable with $n=n\dw$. Let $\mathcal{M}_C,n$ be the space of all random maps $g\dw:C\dw\rightarrow C(\theta_n\omega)$ such that $g\dw(\cdot)$ is continuous and $g(\cdot)(x)$ is measurable with $n=n\dw$. 

    Suppose that $C=N_L$ with $L\subset N$. given $\omega\in\Omega$ arbitrary but fixed, define:

    \begin{equation}
        \Tilde{d}_{\omega}(x,y) = \begin{cases}
            d_X(x,y),\,\text{ if }x,y\in (N\setminus L)\dw \\
            0,\,\text{ if }x=y=[L\dw] \\
            \text{dist}_X(y,L\dw),\,\text{ if }x=[L\dw]\text{ and }y\in (N\setminus L)\dw
        \end{cases}
    \end{equation}
 
    For $f,g\in \mathcal{M}_{C,n}$, define:
    \begin{equation}\label{defRandomMetric}
       d_{\omega}(f,g):= \sup_{x\in C(\omega)} \tilde d_{\theta_n\omega}\bigl(f(\omega,x),g(\omega,x)\bigr).
    \end{equation}

    We can show that $\Tilde{d}_{\omega}(\cdot,\cdot)$ is a metric for the pointed space. With that in mind, one can conclude that $(\mathcal{M}_{C,n},d)$ is a random metric space with the random topology induced by the random metric given as in Eq. \eqref{defRandomMetric}. 
    
\end{definition}

\begin{definition}
    \textbf{(Random Conley Index)} Suppose that $\varphi$ is the time $1$ map for a discrete random dynamical system, $S$ is a random invariant isolated set for $\varphi$ and $P=(N,L)$ is a random filtration pair for $S$. Let $h_P(S,\varphi)$ be the random homotopy class $[\varphi_P]$ of the pointed space $N_L$ with $\varphi_P$ as a representative. The random shift equivalence class of $h_P(S,\varphi)$, denoted by $h(S,\varphi)$, is called the \textbf{random Conley index} for $S$. 
\end{definition}

We emphasize that, from this point on, we work with random dynamical systems
indexed by discrete time $\mathbb{N}$.  Liu~\cite{liu2008conley} develops the Conley index
for random homeomorphisms over $\mathbb{Z}$ and, in that general (possibly
noncompact) setting, invertibility is needed to control backward orbits and to
identify random isolated invariant sets via two–sided trajectories.  In the
present paper the phase space $X$ is a compact metric space and we only use
forward time $n\in\mathbb{N}$.  In this situation, the key ingredients of
Liu’s construction (existence of random isolating neighbourhoods and filtration
pairs, and the random continuity property) carry over verbatim from random
homeomorphisms to random continuous maps, since compactness of preimages of
random compact sets follows directly from continuity on $X$.  Therefore, in
the statement below we formulate the random continuity property for random
continuous maps $\varphi\colon \mathbb{N}\times\Omega\times X\to X$ on a
compact phase space.

\begin{thm} \label{ContinuationProperty}
    \textbf{(Random Continuation Property)} Suppose that $\varphi_t$ with $t\in [0,1]$ is a family of random map that depends continuously of $t$ (in a neighbourhood of the $C^0$-topology). If $N$ is a random isolating neighbourhood for each $\varphi_t$, then the random Conley index $h(S_t, \varphi_t)$ for $t$ is independent of $t\in [0,1]$, i.e., $h(S_t,\varphi_t) = h(S_0,\varphi_0)$ where $S_t=\text{Inv}(N,\varphi_t)$  with $t\in [0,1]$ is the random isolated invariant set for $\varphi_t$ in $N$. 
\end{thm}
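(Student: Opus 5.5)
We follow Liu's proof of the continuation property for random homeomorphisms, checking that invertibility is never used once $X$ is compact and only forward time is involved. Rather than cover $[0,1]$ by finitely many intervals, we argue by connectedness. Let
\[
J:=\{t\in[0,1]\,:\,h(S_t,\varphi_t)=h(S_0,\varphi_0)\}.
\]
We will show that $J$ is open and closed in $[0,1]$. Since $0\in J$, this gives $J=[0,1]$. Both openness and closedness follow from one local statement: every $t_0\in[0,1]$ has a neighbourhood $I_{t_0}\subset[0,1]$ on which $t\mapsto h(S_t,\varphi_t)$ is constant. So everything rests on this local constancy.

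\textbf{Step 1: a common filtration pair.} Fix $t_0$. Shrinking $N$ to a random isolating block $B$ inside it would need the random analogue of the first item of Theorem~\ref{existenceFiltPair}. We avoid this by working directly with the isolating block given by Liu's construction, which only uses compactness of $\varphi_{t_0}(\omega,\cdot)^{-1}(K)$ for random compact $K$; this holds because $X$ is compact and $\varphi_{t_0}(\omega,\cdot)$ is continuous. Choose a random compact neighbourhood $L$ of the exit set as in Theorem~\ref{ingredient1}, so that $P=(B,L)$ is a random filtration pair for $S_{t_0}$. The second half of Theorem~\ref{ingredient1} gives a random radius $r(\omega)>0$ such that $P$ is a random filtration pair for $\mathrm{Inv}(\overline{B\setminus L},\psi)$ whenever $d^{B}_\omega(\psi,\varphi_{t_0})<r(\omega)$ almost surely. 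By continuity of $t\mapsto\varphi_t$ in the $C^0$-random topology, there is an interval $I_{t_0}\ni t_0$ on which this inequality holds for $\psi=\varphi_t$. Since $N$ isolates every $\varphi_t$ and $B\subset N$ is a neighbourhood of $S_t$ for $t\in I_{t_0}$ (shrinking $I_{t_0}$ if necessary), we get $\mathrm{Inv}(\overline{B\setminus L},\varphi_t)=S_t$. Hence $P$ is a filtration pair for $S_t$ for every $t\in I_{t_0}$.

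\textbf{Step 2: a random homotopy.} For $s,t\in I_{t_0}$, the interpolation $\tau\mapsto\varphi_{(1-\tau)s+\tau t}$ induces a family of pointed maps $\varphi_{\tau,P}(\omega)\colon B_L(\omega)\to B_L(\theta\omega)$. We claim this family is continuous in $\tau$ for the random metric $d_\omega$ of \eqref{defRandomMetric}. Away from the base point, continuity is just $C^0$-closeness. The delicate point is a point $x$ whose image crosses $\partial B(\theta\omega)$ or enters $L(\theta\omega)$ as $\tau$ varies. The filtration property $\varphi_\tau(\omega,L(\omega))\cap\overline{B\setminus L}(\theta\omega)=\emptyset$, together with $L$ being a neighbourhood of the exit set, shows that near such $x$ the image lies in $L(\theta\omega)$ or outside $B(\theta\omega)$. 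So $\tilde d$ to the base point stays small, and the pointed maps form a random homotopy. Hence $h_P(S_s,\varphi_s)=h_P(S_t,\varphi_t)$. Passing to random shift equivalence classes, which do not depend on the filtration pair by the preceding theorem, gives constancy of $h$ on $I_{t_0}$.

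\textbf{Main obstacle.} The hardest part is measurability and uniformity in $\omega$. The interval $I_{t_0}$ must be deterministic, even though the radius $r(\omega)$ from Theorem~\ref{ingredient1} is random. So the $C^0$-random continuity of $t\mapsto\varphi_t$ has to be read as the statement that $\{t: d_\omega(\varphi_t,\varphi_{t_0})<r(\omega)\ \text{a.s.}\}$ contains a neighbourhood of $t_0$. One also has to check that the homotopy $\varphi_{\tau,P}$ is jointly measurable in $(\omega,x)$. I would first try to take this as the meaning of the hypothesis in Theorem~\ref{ContinuationProperty}. In our applications it is automatic, because the perturbations in $\mathcal R^1_\epsilon(f)$ are uniform in $\omega$.
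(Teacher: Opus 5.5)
The paper gives no proof of this statement. It imports Liu's theorem and only remarks that compactness of preimages replaces invertibility. Your outline (common filtration pair, homotopy of pointed maps, connectedness of $[0,1]$) is therefore the intended route, and Step~2 and the connectedness argument are fine.

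The gap is the last sentence of Step~1. For $(B,L)$ to compute $h(S_t,\varphi_t)$ you need $\Inv(\overline{B\setminus L},\varphi_t)=\Inv(N,\varphi_t)$, i.e.\ $S_t\subset\mathrm{int}\,\overline{B\setminus L}$. Knowing that $B$ is a neighbourhood of $S_t$ does not give this. The filtration property only forbids passing from $L$ back to $\overline{B\setminus L}$. So a full $\varphi_t$-orbit can lie in $B$, enter $L$ and remain in $L\setminus B^-$ forever, and then $(B,L)$ sees only a proper part of $S_t$.

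What you actually need is upper semicontinuity of $t\mapsto\Inv(N,\varphi_t)$ at $t_0$, into the neighbourhood $\mathrm{int}\,\overline{B\setminus L}$ of $S_{t_0}$, on a deterministic interval of $t$'s. This is exactly where the random setting bites. The fibre $\Inv(N,\varphi_t)(\omega)$ depends on $\varphi_t(\theta^n\omega,\cdot)$ for all $n\in\mathbb Z$. The diagonal compactness argument therefore only produces an $\omega$-dependent threshold $\delta(\omega)>0$ with $\Inv(N,\varphi_t)(\omega)\subset\mathrm{int}\,\overline{B\setminus L}(\omega)$ for $|t-t_0|<\delta(\omega)$. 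A deterministic $I_{t_0}$ requires $\operatorname{ess\,inf}\delta>0$, and nothing in the hypotheses provides it. Encoding it through a random radius fails for the same reason: the closeness required on the fibre $\theta^n\omega$ is dictated by how long orbits through other fibres take to leave $N$, and the infimum of these requirements along a $\theta$-orbit need not be positive. ``Shrinking $I_{t_0}$ if necessary'' hides this. Your reinterpretation of the continuity hypothesis in the last paragraph concerns $d_\omega$ only, so it does not cover this step.

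There are three smaller points in the same direction.
\begin{enumerate}
\item Uniform-in-$\omega$ closeness does not make your reading of the hypothesis automatic. The bound $\sup_\omega d_\omega(\varphi_t,\varphi_{t_0})<\epsilon$ gives $d_\omega(\varphi_t,\varphi_{t_0})<r(\omega)$ a.s.\ only if $r\ge\epsilon$ a.s., and Theorem~\ref{ingredient1} provides no positive lower bound on $r$ when $\varphi_{t_0}$ is genuinely random.
\item You do not really avoid the existence of a random isolating block $B\subset N$ with $S_{t_0}\subset\mathrm{int}\,B$. Liu's construction is precisely that existence result, and verifying that it survives without invertibility is part of the work, not something bypassed.
\item In the application inside the proof of Theorem~\ref{thm:main_forcing} these uniformity issues disappear. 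The $N_i$ are deterministic, and the margin $\eta$ and the uniform derivative dichotomy confine the invariant sets and give radii uniformly in $\omega$. The honest fix is either to add such a uniform isolation hypothesis to the statement, or to verify the needed upper semicontinuity directly in that setting.
\end{enumerate}
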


\begin{thm} \label{wazeProperty}
    \textbf{(Random Wazewski property)} Suppose that $S\dw$ is a random invariant isolated set and that its random Conley index for $S\dw$ is non trivial, i.e., $h(S,\varphi)\neq \underset{\text{-}}{0}$. Then $S\neq \emptyset$ $\mathbb{P}$-a.s. given that $\theta$ is ergodic over $\mathbb{P}$. 
\end{thm}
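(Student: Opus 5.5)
The plan is to argue by contraposition: assume the conclusion fails on a set of positive measure, upgrade this to failure almost surely using ergodicity, and then show that the random Conley index must be the trivial class $\underset{\text{-}}{0}$. Throughout, fix a random isolating neighbourhood $N$ with $S=\Inv(N,\varphi)$. By Theorem~\ref{ingredient1}, fix a random filtration pair $P=(N,L)$ for $S$, so that $h(S,\varphi)$ is represented by the pointed random map $\varphi_P$ on $N_L$.

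\emph{Step 1 (a zero--one law).} Let $E:=\{\omega: S(\omega)=\emptyset\}$. It is measurable because $S$ is a random compact set: $\omega\in E$ iff $\dist_X(x_0,S(\omega))=\infty$ for a fixed $x_0$.

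I will show that $E$ is $\theta$-invariant modulo null sets. If $S(\omega)=\emptyset$, then $S(\theta\omega)=\varphi(\omega)S(\omega)=\emptyset$, so $E\subset\theta^{-1}E$. Conversely, if $x\in S(\theta\omega)$, there is a full orbit in $N$ through $x$ over $\theta\omega$. Its $(-1)$-st term lies in $S(\omega)$, so $\theta^{-1}E\subset E$ up to a null set. Since $\mathbb P$ is $\theta$-invariant, this gives $\mathbb P(E\,\triangle\,\theta^{-1}E)=0$. Ergodicity then yields $\mathbb P(E)\in\{0,1\}$.

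It therefore suffices to prove that $\mathbb P(E)=1$ forces $h(S,\varphi)=\underset{\text{-}}{0}$. This is the only place where ergodicity is used, and it explains why it appears in the hypothesis.

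\emph{Step 2 (finite exit time).} Assume $S(\omega)=\emptyset$ almost surely. For $n\in\mathbb N$, let
\[
A_n(\omega):=\bigl\{x\in N(\theta_{-n}\omega):\ \phi(j,\theta_{-n}\omega,x)\in \mathrm{cl}(N\setminus L)(\theta_{j-n}\omega)\ \text{for } 0\le j\le 2n\bigr\}.
\]
This uses the two-sided base from Liu's setting; otherwise I work with a shifted one-sided version of the same sets. Each $A_n$ is a random compact set, as a finite intersection of preimages of random compact sets under continuous maps.

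The key claim is that for a.e.\ $\omega$ some $A_n(\omega)$ is empty. Suppose instead that all $A_n(\omega)$ are nonempty. Then the points $\phi(n,\theta_{-n}\omega,x_n)$ with $x_n\in A_n(\omega)$ lie in the compact set $N(\omega)$. A diagonal extraction, using compactness of $X$ and continuity of each $\varphi(\theta_j\omega)$, produces a full orbit in $\mathrm{cl}(N\setminus L)$ over $\omega$. Since $\mathrm{cl}(N\setminus L)$ is an isolating neighbourhood of $S$ by the filtration-pair axioms, this orbit lies in $S(\omega)$, a contradiction.

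Set $n(\omega):=\min\{n:A_n(\omega)=\emptyset\}$. It is measurable, since $\{A_n=\emptyset\}$ is measurable for random compact sets.

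\emph{Step 3 (the index map is eventually constant).} Consider any point of $N_L(\omega)$ whose orbit stays in $N\setminus L$ for $2n(\omega)$ steps. Such a point would give an element of $A_{n(\omega)}(\theta_{n(\omega)}\omega)$. The same compactness argument, applied with the base shifted to $\theta_{n(\omega)}\omega$, shows this set is empty after possibly enlarging $n$ by a measurable amount. Also, by the third filtration-pair axiom, once an orbit enters $L$ it is sent to the base point and stays there.

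Hence, for the random time $m(\omega):=2n(\omega)$, the composite $\varphi_P^{m(\omega)}(\omega):N_L(\omega)\to N_L(\theta_{m(\omega)}\omega)$ is identically $[L]$. By the random analogue of Observation~\ref{homoClassOfPoint}, made available by the spaces $\mathcal M_{C,n}$ with $n=n(\omega)$ random, $\varphi_P$ is random shift equivalent to the constant pointed map. Thus $h(S,\varphi)=\underset{\text{-}}{0}$, contradicting the hypothesis, and so $\mathbb P(E)=0$.

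\emph{Main obstacle.} I expect the delicate point to be Steps 2 and 3 together. Making the exit time a measurable random variable requires checking that $\{A_n=\emptyset\}$ is measurable. One must also verify that Liu's random shift equivalence (Definition 5.1 of \cite{liu2008conley}) accepts a random, possibly unbounded, iterate $m(\omega)$ when identifying $[\varphi_P]$ with $\underset{\text{-}}{0}$. If an unbounded $m$ causes trouble, I would first restrict to the sets $\{n(\omega)\le M\}$, whose measure tends to one as $M\to\infty$, and then use $\theta$-invariance together with the zero--one law of Step 1 to pass to a deterministic bound almost surely.
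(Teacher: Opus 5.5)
Your Step~1 is correct, and it is exactly where ergodicity enters. (Your ``conversely'' sentence actually re-proves $E\subset\theta^{-1}E$; the reverse inclusion is just $\varphi(\omega)S(\omega)\subset S(\theta\omega)$. One inclusion plus $\theta$-invariance of $\mathbb P$ already suffices.)

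The genuine gap is the last move of Step~3. From ``$\varphi_P^{m(\omega)}(\omega)\equiv[L]$ for an a.s.\ finite random $m$'' you pass to $h(S,\varphi)=\underset{\text{-}}{0}$ via a ``random analogue of Observation~\ref{homoClassOfPoint}'' with a random, non-invariant and possibly unbounded lag. No such criterion is available in the paper. The spaces $\mathcal M_{C,n}$ with random $n$ only describe maps with a random lag; they do not assert that a map with a constant random iterate is shift equivalent to the trivial system.

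Your fallback cannot supply this either. The set $\{n(\omega)\le M\}$ is not $\theta$-invariant in general, so the zero--one law says nothing about it, and in general no deterministic bound exists. Here is an example:
\begin{itemize}
\item $\Omega=\{G,B\}^{\mathbb Z}$ with the shift and the Bernoulli product measure with $\mathbb P(\omega_0=G)=p\in(0,1)$;
\item $X=[-2,2]$ and $N\equiv[-1,1]$;
\item $\varphi(\omega)x=\max\{-2,\min\{2,3x\}\}$ if $\omega_0=G$, and $\varphi(\omega)\equiv 2$ if $\omega_0=B$;
\item $L(\omega)=[-1,-\tfrac14]\cup[\tfrac14,1]$ if $\omega_0=G$, and $L(\omega)=N$ if $\omega_0=B$.
\end{itemize}
This is a random filtration pair, and $S=\emptyset$ a.s. Yet for every deterministic $m$, consider the event $\{\omega_0=\dots=\omega_m=G\}$, which has probability $p^{m+1}>0$. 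On it, $\varphi_P^m(\omega)$ is a degree-one self-map of the circle $N_L$, hence not null-homotopic. So your argument stops precisely at the point you flagged.

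There is also a smaller hole just before this. Write $B_m(\omega)$ for the set of points of $N(\omega)$ whose orbit stays in $\mathrm{cl}(N\setminus L)$ for $m$ steps. Step~2 empties windows centred at $\omega$, i.e.\ $B_{2n}(\theta_{-n}\omega)=A_n(\omega)=\emptyset$ for some $n$. Step~3, however, needs $B_m(\omega)=\emptyset$ for some $m$. Re-running the compactness argument at $\theta_{n(\omega)}\omega$ only controls a window centred there. Forward compactness alone yields a forward-trapped point, not a full orbit. This hole is repairable: $\tau(\omega):=\min\{m:B_m(\omega)=\emptyset\}$ satisfies $\tau(\theta\omega)\ge\tau(\omega)-1$. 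Hence $\{\tau=\infty\}$ is invariant mod null sets, and Step~2 forces $\mathbb P(\tau=\infty)=0$. It does not help with the main gap, though.

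The clean way out is to drop the exit-time analysis altogether. The paper gives no proof of this statement (it is quoted from Liu), and the standard argument is as follows. Once Step~1 gives $S=\emptyset$ a.s., the pair $(\emptyset,\emptyset)$ is a random filtration pair for $S$, since every axiom holds vacuously. Its pointed space $N_L(\omega)=\emptyset\cup[\emptyset]$ is a single point, and the induced map is the identity of that point, i.e.\ a representative of $\underset{\text{-}}{0}$. The theorem that any two random filtration pairs for $S$ induce random shift equivalent pointed maps then gives $h(S,\varphi)=\underset{\text{-}}{0}$. All the random-lag bookkeeping you were worried about is absorbed in that theorem.
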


\section{Proof of the main results}\label{ProofMainThm}

\subsection{Proof of Theorem \ref{thm:main_forcing} }
In this subsection, Theorem \ref{thm:main_forcing} is proved. The proof strategy is motivated by the approach outlined in Example 8.1 of \cite{liu2008conley}. The method involves examining random dynamical systems within a sufficiently small neighbourhood of a deterministic system generated by a given map, ensuring that the continuation property holds in that neighbourhood. The Wazewski property is then applied to establish the existence of a non-trivial random invariant set for the perturbed systems. Lastly, the behaviour of elements within this random invariant set is analysed to verify that they satisfy the conditions required for an $(\epsilon,k)$-random periodic orbit as in Definition \ref{def:epsilonkorbitaale}. 

\begin{proof}[Proof of Theorem~\ref{thm:main_forcing}]
Fix $k\in\mathcal T$. We prove items (1)--(3) of the theorem for this $k$.

Let $S=\{p_1,\dots,p_k\}\subset \mathrm{int}\,C$ be a hyperbolic $k$--cycle of $f$
(of minimal period $k$), labelled so that $f(p_i)=p_{i+1}$ for $i<k$ and $f(p_k)=p_1$.
Such an orbit exists by the hypotheses of Theorem~\ref{thm:main_forcing}.
Set $g:=f^k$.

\medskip
\noindent\textbf{Step 1: deterministic isolating neighbourhood with forcing itinerary.}
By Theorem~\ref{GenMapsConleyIndex}, there exist pairwise disjoint compact intervals
\[
  N_i=[p_i-\varepsilon_i,p_i+\varepsilon_i]\subset \mathrm{int}\,C,\qquad i=1,\dots,k,
\]
such that, with $N:=\bigcup_{i=1}^k N_i$, the set $N$ is an isolating neighbourhood for $S$,
and $h(S)\neq 0$. Moreover, shrinking the $N_i$ if necessary we may (and do) assume that

\begin{enumerate}
\item[(i)] (\emph{itinerary forcing for $f$}) for all $i$ (indices mod $k$),
\[
  f(N_i)\subset \mathrm{int}(N_{i+1});
\]
\item[(ii)] (\emph{uniform hyperbolicity for $g$ on each $N_i$}) there exists $\beta>0$
such that for every $i$ either
\[
  \inf_{x\in N_i}|g'(x)|\ge 1+3\beta
  \qquad\text{or}\qquad
  \sup_{x\in N_i}|g'(x)|\le 1-3\beta;
\]
\item[(iii)] (\emph{separation margin}) writing
\[
  \eta := \min_{1\le i\le k} \dist\bigl(f(N_i),\, C\setminus \mathrm{int}(N_{i+1})\bigr) \;>\;0,
\]
we have a positive buffer $\eta$ between $f(N_i)$ and the complement of $\mathrm{int}(N_{i+1})$.
\end{enumerate}

\medskip
\noindent\textbf{Step 2: choosing $\epsilon$ and obtaining the random forced itinerary.}
Let $\epsilon>0$ be small enough so that for every $\varphi\in\mathcal R^1_\epsilon(f)$
(Definition~\ref{def:R1eps-simple}) we have:
\begin{enumerate}
\item[(a)] (\emph{one-step forcing persists}) since $\sup_{\omega,x\in C}|\varphi(\omega,x)-f(x)|<\epsilon$,
taking $\epsilon<\eta$ yields
\[
  \varphi(\omega,N_i)\subset \mathrm{int}(N_{i+1})
  \qquad\text{for all $\omega\in\Omega$ and all $i$};
\]
\item[(b)] (\emph{$C^1$ control along $N$}) on $N$ we have uniform $C^1$--closeness:
\[
  \sup_{\omega\in\Omega}\sup_{x\in N}
  \Bigl(|\varphi(\omega,x)-f(x)|+|\partial_x\varphi(\omega,x)-f'(x)|\Bigr)<\epsilon,
\]
and all breakpoints lie in $C\setminus N$.
\end{enumerate}

Let $\phi$ be the cocycle generated by $\varphi$ (as in \eqref{eq_RDS_GenByMap} with $\mathbb T=\mathbb N$).
Property (a) implies the \emph{forced itinerary} for $\phi$:
for every $\omega$ and every $x\in N_i$,
\[
  \phi(1,\omega,x)=\varphi(\omega,x)\in \mathrm{int}(N_{i+1}),
\]
hence inductively
\[
  \phi(j,\omega,x)\in \mathrm{int}(N_{i+j})
  \qquad\text{for }j=0,1,\dots,k-1
\]
(indices mod $k$), and in particular the $k$--step map
\[
  G_\omega(x):=\phi(k,\omega,x)
\]
satisfies $G_\omega(N_i)\subset \mathrm{int}(N_i)$ for every $i$ and every $\omega$.

Moreover, because all intermediate iterates stay in $N$ and all breakpoints lie in $C\setminus N$,
the composition defining $G_\omega$ is $C^1$ on each $N_i$. By the chain rule and (b), and since
$k$ is fixed and $f'$ is bounded on $N$, there exists a constant $C>0$ such that
\[
  \sup_{\omega\in\Omega}\sup_{x\in N}\bigl|(G_\omega)'(x)-g'(x)\bigr|\le C\epsilon.
\]

Choosing $\epsilon$ so small that $C\epsilon\le\beta$ and using Step~1(ii), we obtain the
\emph{uniform derivative dichotomy}: for each $i$, matching the deterministic alternative in
Step~1(ii), either
\[
  \inf_{\omega\in\Omega}\inf_{x\in N_i} |(G_\omega)'(x)| \ge 1+2\beta,
  \qquad\text{or}\qquad
  \sup_{\omega\in\Omega}\sup_{x\in N_i} |(G_\omega)'(x)| \le 1-2\beta.
\]

\medskip
\noindent\textbf{Step 3: existence and uniqueness of a random $k$--periodic point in $N_1$.}

\smallskip
\noindent\emph{Step 3(a): random isolated invariant set for $G$ in $N_1$.}
Let $g:=f^{k}$. Then $p_1$ is a hyperbolic fixed point of $g$, and $N_1$ is an
isolating neighbourhood for $\{p_1\}$ with non-trivial Conley index
(Theorem~\ref{GenMapsConleyIndex} applied to $g$ on $N_1$), hence
\[
  h\bigl(\{p_1\},g\bigr)\neq 0.
\]

Define the random map over the base $(\Omega,\theta^{k})$ by
\[
  G(\omega,x):=\phi(k,\omega,x).
\]

By the $C^{1}$--closeness chosen in Step~2 (shrinking $\varepsilon$ if necessary),
$N_1$ remains isolating for the $k$--step dynamics, i.e.
\[
  \Inv(N_1,G)(\omega)\subset \operatorname{int}(N_1)
  \qquad\text{for $\mathbb P$--a.e.\ $\omega$},
\]
so $N_1$ is a random isolating neighbourhood for $G$.

Set
\[
  A_1(\omega):=\Inv(N_1,G)(\omega).
\]

By the continuation property of the \emph{random cohomological} Conley index (applied to $G$
as a random perturbation of $g$ on $N_1$), we have that the Conley index is non trivial.

Consequently, by the random Wa\.zewski principle, $A_1(\omega)\neq\emptyset$ on
a $\theta^{k}$--invariant set of positive probability; in particular, if $\theta^{k}$
is ergodic then $A_1(\omega)\neq\emptyset$ for $\mathbb P$--a.e.\ $\omega$.

Finally, by definition of $\Inv(N_1,G)(\omega)$, for each $\omega$ with
$A_1(\omega)\neq\emptyset$ and each $x\in A_1(\omega)$ there exists a full
$G$--orbit $(x_n)_{n\in\mathbb Z}\subset N_1$ such that
\[
  x_0=x,\qquad x_{n+1}=G(\theta^{nk}\omega,x_n)\quad\text{for all }n\in\mathbb Z.
\]

\smallskip
\noindent\emph{Step 3(b): uniqueness in each fibre.}
Fix $\omega$ and suppose $p,q\in A_1(\omega)$. Choose full $G$--orbits
$(p_n)_{n\in\mathbb Z}$, $(q_n)_{n\in\mathbb Z}$ in $N_1$ with $p_0=p$, $q_0=q$ and
\[
  p_{n+1}=G_{\theta^{nk}\omega}(p_n),\quad
  q_{n+1}=G_{\theta^{nk}\omega}(q_n)\qquad (n\in\mathbb Z).
\]

Set $d_n:=|p_n-q_n|$.

Assume first that the expanding alternative holds on $N_1$, i.e.
\[
  \inf_{\omega'\in\Omega}\inf_{x\in N_1}|(G_{\omega'})'(x)|\ge 1+2\beta.
\]

Then, by the mean value theorem applied to $G_{\theta^{nk}\omega}$ on $N_1$,
\[
  d_{n+1}
  =|G_{\theta^{nk}\omega}(p_n)-G_{\theta^{nk}\omega}(q_n)|
  \ge (1+2\beta)\,d_n\qquad (n\ge 0).
\]

Hence
\[
  d_n\ge (1+2\beta)^n d_0\qquad (n\ge 0).
\]

Since $p_n,q_n\in N_1$ for all $n$, we have $d_n\le \diam(N_1)$, so this is possible
only if $d_0=0$, i.e.\ $p=q$.

Assume now that the contracting alternative holds on $N_1$, i.e.
\[
  \sup_{\omega'\in\Omega}\sup_{x\in N_1}|(G_{\omega'})'(x)|\le 1-2\beta.
\]

Then for every $n\in\mathbb Z$,
\[
  d_{n+1}
  =|G_{\theta^{nk}\omega}(p_n)-G_{\theta^{nk}\omega}(q_n)|
  \le (1-2\beta)\,d_n.
\]

Applying this inequality for $n=-1,-2,\dots,-m$ yields
\[
  d_0 \le (1-2\beta)^m\,d_{-m}\le (1-2\beta)^m\,\diam(N_1).
\]

Letting $m\to\infty$ gives $d_0=0$, hence again $p=q$.

Therefore $A_1(\omega)$ is a singleton whenever it is non-empty, and so for almost
every $\omega$ there exists a unique point in $A_1(\omega)$.

\smallskip
\noindent\emph{Step 3(c): construction of the random periodic point and minimality of the period.}
Since $A_1$ is a random compact set and each $A_1(\omega)$ is a singleton for almost every $\omega$,
there exists a measurable map $x:\Omega\to N_1$ such that $x(\omega)\in A_1(\omega)$ for
$\mathbb P$--a.e.\ $\omega$.

By invariance of $\Inv(N_1,G)(\omega)$ under $G$, for each $\omega$ we have
\[
  G_\omega(x(\omega))\in A_1(\theta^k\omega).
\]

By uniqueness in each fibre, this forces
\[
  G_\omega(x(\omega))=x(\theta^k\omega),
\]
i.e.
\[
  \phi(k,\omega,x(\omega))=x(\theta^k\omega).
\]
Thus $x$ is a random periodic point of period $k$ in the sense of
Definition~\ref{def_randomPeriodicPoint}.

To see that this period is minimal, note that by the forced itinerary in Step~2,
for every $\omega$ and every $\ell=1,\dots,k-1$,
\[
  \phi(\ell,\omega,x(\omega))\in N_{\ell+1},
\]
while $x(\theta^\ell\omega)\in N_1$. Since the $N_i$ are pairwise disjoint,
\[
  \phi(\ell,\omega,x(\omega))\neq x(\theta^\ell\omega)
  \qquad\text{for all $\omega$ and all $\ell=1,\dots,k-1$}.
\]

Hence $x$ has minimal random period $k$ in the sense of Definition~\ref{def_randomPeriodicPoint}.

This proves item (1) for the chosen $k$.

\medskip
\noindent\textbf{Step 4: random periodic orbit (set-valued) and minimality under ergodicity.}
Assume now that $\theta^k$ is ergodic.
From the random periodic point $x$ of minimal random period $k$, define the random set
\[
  P(\omega)
  :=\Bigl\{\phi(j,\theta^{-j}\omega,x(\theta^{-j}\omega)):\; j=0,1,\dots,k-1\Bigr\}.
\]

Then $P(\omega)\subset N_1\cup\cdots\cup N_k$, and by the forced itinerary the $k$ points
lie one in each $N_i$, hence $\#P(\omega)=k$ for all $\omega$.

We first check that $P$ is random invariant. Fix $\omega$ and take any $z\in P(\omega)$.
Then there exists $j\in\{0,\dots,k-1\}$ such that
\[
  z=\phi(j,\theta^{-j}\omega,x(\theta^{-j}\omega)).
\]

Using the cocycle property,
\[
  \phi(1,\omega,z)
  =\phi\bigl(1,\omega,\phi(j,\theta^{-j}\omega,x(\theta^{-j}\omega))\bigr)
  =\phi\bigl(j+1,\theta^{-j}\omega,x(\theta^{-j}\omega)\bigr).
\]
If $j\le k-2$, set $\tilde j=j+1\in\{1,\dots,k-1\}$. Since
$\theta^{-j}\omega=\theta^{-\tilde j}(\theta\omega)$, we obtain
\[
  \phi(1,\omega,z)
  =\phi\bigl(\tilde j,\theta^{-\tilde j}(\theta\omega),
               x(\theta^{-\tilde j}(\theta\omega))\bigr)\in P(\theta\omega).
\]

If $j=k-1$, then
\[
  \phi(1,\omega,z)
  =\phi\bigl(k,\theta^{-(k-1)}\omega,x(\theta^{-(k-1)}\omega)\bigr).
\]

By the random periodicity of $x$ (Step~3(c)), applied at $\theta^{-(k-1)}\omega$,
\[
  \phi\bigl(k,\theta^{-(k-1)}\omega,x(\theta^{-(k-1)}\omega)\bigr)
  =x\bigl(\theta^k(\theta^{-(k-1)}\omega)\bigr)=x(\theta\omega),
\]

and $x(\theta\omega)\in P(\theta\omega)$ corresponds to the term $j=0$ in the
definition of $P(\theta\omega)$. Hence
\[
  \phi(1,\omega,P(\omega))\subset P(\theta\omega)
  \qquad\text{for all }\omega.
\]

Conversely, fix $\omega$ and take any $w\in P(\theta\omega)$. Then there exists
$j\in\{0,\dots,k-1\}$ such that
\[
  w=\phi\bigl(j,\theta^{-j}(\theta\omega),
                x(\theta^{-j}(\theta\omega))\bigr).
\]

If $j\ge 1$, set $j':=j-1\in\{0,\dots,k-2\}$ and define
\[
  z:=\phi\bigl(j',\theta^{-j'}\omega,x(\theta^{-j'}\omega)\bigr)\in P(\omega).
\]

Using again the cocycle property and the identity
$\theta^{-j}(\theta\omega)=\theta^{-j'}\omega$, we get
\[
  \phi(1,\omega,z)
  =\phi\bigl(j,\theta^{-j'}\omega,x(\theta^{-j'}\omega)\bigr)
  =\phi\bigl(j,\theta^{-j}(\theta\omega),
              x(\theta^{-j}(\theta\omega))\bigr)
  =w.
\]

If $j=0$, then $w=x(\theta\omega)$. By the random periodicity of $x$,
applied at $\theta^{-(k-1)}\omega$, we have
\[
  x(\theta\omega)
  =\phi\bigl(k,\theta^{-(k-1)}\omega,x(\theta^{-(k-1)}\omega)\bigr).
\]

Set
\[
  z:=\phi\bigl(k-1,\theta^{-(k-1)}\omega,x(\theta^{-(k-1)}\omega)\bigr)\in P(\omega).
\]

Then, by the cocycle property,
\[
  \phi(1,\omega,z)
  =\phi\bigl(k,\theta^{-(k-1)}\omega,x(\theta^{-(k-1)}\omega)\bigr)
  =x(\theta\omega)=w.
\]
Thus, in all cases $w=\phi(1,\omega,z)$ for some $z\in P(\omega)$, and hence
\[
  P(\theta\omega)\subset \phi(1,\omega,P(\omega))
  \qquad\text{for all }\omega.
\]

Combining the two inclusions, we conclude that
\[
  \varphi(\omega)P(\omega)
  =\phi(1,\omega,P(\omega))
  =P(\theta\omega)
  \qquad\text{for all }\omega.
\]

Therefore $P$ is a random periodic orbit of period $k$ in the sense of
Definition~\ref{def_randomPeriodicOrbit}.

To see minimality in the sense of Definition~\ref{def_randomPeriodicOrbit},
let $Q(\omega)\subset P(\omega)$ be a non-empty random invariant subset.
For each $i\in\{1,\dots,k\}$, denote by $y_i(\omega)$ the unique point of
$P(\omega)\cap N_i$ (existence and uniqueness follow from the construction of $P$).
Define the indicator vector
\[
  v(\omega)=(v_1(\omega),\dots,v_k(\omega))\in\{0,1\}^k,\qquad
  v_i(\omega):=\mathbf 1_{\{y_i(\omega)\in Q(\omega)\}}.
\]

By the forced itinerary and the definition of $P$, we have
\[
  \phi(1,\omega,y_i(\omega))=y_{i+1}(\theta\omega),\qquad i=1,\dots,k
\]
(with indices taken modulo $k$). Since $Q$ is invariant and contained in $P$, we obtain
for each $i$ and $\omega$:
\[
  y_i(\omega)\in Q(\omega) \quad\Longleftrightarrow\quad
  y_{i+1}(\theta\omega)\in Q(\theta\omega).
\]
Equivalently,
\[
  v_{i+1}(\theta\omega)=v_i(\omega),\qquad i=1,\dots,k,
\]
so that
\[
  v(\theta\omega)=\sigma(v(\omega)),
\]
where $\sigma$ is the cyclic shift on $\{0,1\}^k$.

Iterating $k$ times yields
\[
  v(\theta^k\omega)=\sigma^k(v(\omega))=v(\omega)
  \qquad\text{for all }\omega,
\]
so $v$ is $\theta^k$--invariant. By ergodicity of $\theta^k$, $v(\omega)$ is almost
surely constant, say $v(\omega)\equiv v^\ast$. The relation
$v(\theta\omega)=\sigma(v(\omega))$ then implies $v^\ast=\sigma(v^\ast)$, so $v^\ast$
is a fixed point of the cyclic shift. The only fixed points of $\sigma$ in $\{0,1\}^k$
are $(0,\dots,0)$ and $(1,\dots,1)$.

Since $Q$ is non-empty, we cannot have $v^\ast=(0,\dots,0)$, hence
$v^\ast=(1,\dots,1)$ and therefore $Q(\omega)=P(\omega)$ for almost every $\omega$.
Thus $P$ has minimal period $k$ in the sense of
Definition~\ref{def_randomPeriodicOrbit}.

This proves item (2) for the chosen $k$.

\medskip
\noindent\textbf{Step 5: existence of a $(\delta,k)$--random periodic orbit.}
Choose $\delta>0$ such that
\[
  \max_{1\le i\le k}\diam(N_i) < \delta
  \quad\text{and}\quad
  \min_{i\neq j}\dist(N_i,N_j) > \delta,
\]
which is possible since the $N_i$ are pairwise disjoint and may be chosen arbitrarily small.

For each $\ell=0,\dots,k-1$, the fibre
\[
  S_\ell(\omega)=\{\phi(\ell+mk,\omega,x(\omega)):\; m\in\mathbb N_0\}
\]
satisfies $S_\ell(\omega)\subset N_{\ell+1}$ by the forced itinerary, hence
$\diam(S_\ell(\omega))\le \diam(N_{\ell+1})<\delta$.
If $i\neq j$, then $S_i(\omega)\subset N_{i+1}$ and $S_j(\omega)\subset N_{j+1}$,
so $\dist(S_i(\omega),S_j(\omega))\ge \dist(N_{i+1},N_{j+1})>\delta$.
Therefore $x$ generates a $(\delta,k)$--random periodic orbit in the sense of
Definition~\ref{def:epsilonkorbitaale}.

This proves item (3) for the chosen $k$.

\medskip
Since $k\in\mathcal T$ was arbitrary, the theorem follows.
\end{proof}

\subsection{Proof of Theorem \ref{thm:SharkRealisationAle} }

 \begin{definition}[Truncated tent maps]\label{def:truncated-tent}
Let $T\colon [0,1]\to[0,1]$ be the standard tent map
\[
  T(x) =
  \begin{cases}
    2x,         & 0 \le x \le \tfrac12,\\[0.3em]
    2 - 2x,     & \tfrac12 \le x \le 1.
  \end{cases}
\]
For each $h\in(0,1]$ we define the \emph{truncated tent map} $T_h\colon[0,1]\to[0,1]$ by
\[
  T_h(x) := \min\{T(x),h\}.
\]
Equivalently,
\[
  T_h(x) =
  \begin{cases}
    2x,         & 0 \le x \le \tfrac{h}{2},\\[0.3em]
    h,          & \tfrac{h}{2} \le x \le 1 - \tfrac{h}{2},\\[0.3em]
    2 - 2x,     & 1 - \tfrac{h}{2} \le x \le 1.
  \end{cases}
\]
\end{definition}
\begin{figure}[ht]
  \centering
  \begin{tikzpicture}[scale=3.5, >=latex]
    \def\h{0.6}

    \draw[thick] (0,0) rectangle (1,1);

    \draw[gray!50] (0,0) -- (1,1);

    \draw[gray!50] (0,0) -- (0.5,1) -- (1,0);

    \draw[gray!50] (0,\h) -- (1,\h);

    \draw[thick]
      (0,0) --
      ({\h/2},\h) --
      ({1 - \h/2},\h) --
      (1,0);

    \node[left] at (0,\h) {$h$};

  
    \node[left] at (-0.15,0.25) {$T_h$};
  \end{tikzpicture}
  \caption{The truncated tent map $T_h$: the thick graph is $T_h$, 
  while the gray graph is the standard tent map and the diagonal.}
  \label{fig:truncated-tent}
\end{figure}
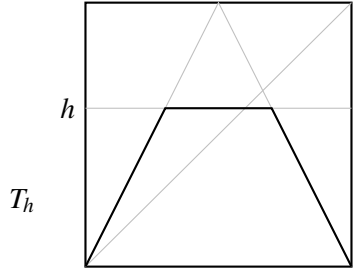

\begin{thm}[Truncated tent maps and finite Sharkovskiĭ tails]
\label{thm:truncated-tent-tail}

Let $T_h\colon[0,1]\to[0,1]$ be the truncated tent maps from
Definition~\ref{def:truncated-tent}.
For each $m\in\mathbb{N}$ with $m\neq 1$ there exists a critical parameter
$h(m)\in(0,1]$ such that the set of all minimal periods of $T_{h(m)}$ coincides
with the Sharkovskiĭ tail generated by $m$, that is
\[
\{k\in\mathbb N : k = m \text{ or } k \prec m\}.
\]

Moreover, if $m^+$ denotes the immediate successor of $m$ in the Sharkovskiĭ
order, then there exists a parameter $\tilde h(m)\in\bigl(h(m),h(m^+)\bigr)$ such
that $T_{\tilde h(m)}$ has the same set of minimal periods, admits an $m$--cycle
contained in $\mathrm{int}\,C=(0,1)$, and satisfies $T_{\tilde h(m)}\in\mathcal H_C$.
In particular, for each $k\prec  m$ with $k>1$ there exists a $k$--cycle
$\mathcal{O}_k$ of $T_{\tilde h(m)}$ such that $\mathcal{O}_k\subset(0,1)$, the
orbit $\mathcal{O}_k$ avoids the plateau
\[
  \{x\in[0,1]:T_{\tilde h(m)}(x)=\tilde h(m)\}
\]
and its endpoints $\tilde h(m)/2$ and $1-\tilde h(m)/2$, and it also avoids the
boundary points $0$ and $1$.
\end{thm}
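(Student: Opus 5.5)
The plan is to reduce everything to the symbolic dynamics of the full tent map $T$ and a monotonicity argument in the parameter $h$. First I will establish the basic structural fact: for $x\in[0,1]$, the $T_h$-orbit of $x$ coincides with its $T$-orbit as long as that orbit stays in $[0,h]$. Moreover, any orbit that enters the plateau is eventually mapped into the orbit of the single point $h$. Hence every periodic orbit of $T_h$ is either a periodic orbit of $T$ contained in $[0,h]$, or the orbit of $h$ itself, in the case where $h$ is periodic. In particular, the set $\mathrm{Per}(T_h)$ of minimal periods is monotone non-decreasing in $h$ (with respect to inclusion). It is also determined by the quantity $\max\mathcal O$ over the periodic orbits $\mathcal O$ of $T$.

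Next, for each $m\neq1$ I define $h(m):=\min\{\max\mathcal O:\ \mathcal O \text{ a periodic orbit of } T \text{ of minimal period } m\}$. The minimum exists because $T$ has only finitely many $m$-cycles. By the previous step, $m\in\mathrm{Per}(T_{h(m)})$. Sharkovskiĭ's forcing theorem (Theorem~\ref{thm:shark}), applied to $T_{h(m)}$, then gives $\mathcal T(m)\subset\mathrm{Per}(T_{h(m)})$.

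For the reverse inclusion I need the key combinatorial fact that $h(\cdot)$ is strictly monotone along the Sharkovskiĭ order: if $k\prec m$ then $h(k)<h(m)$, with the convention $3$ largest and $h(3)=\max$. I would prove this in one of two ways. The first is via kneading theory: the itinerary of $h$ under $T$ encodes exactly which cycles fit in $[0,h]$, and the minimal orbits realising each period are the Štefan-type cycles, whose maxima are ordered as claimed. The second is more self-contained. For $h<h(m)$, any $m$-cycle of $T_h$ would be a $T$-cycle inside $[0,h]$, contradicting minimality, unless it is the orbit of $h$; that orbit I would rule out by choosing $h$ generic. Then, if some $k$ with $k\succ m$ (i.e.\ $k$ earlier than $m$) belonged to $\mathrm{Per}(T_{h(m)})$, forcing would give $m\in\mathrm{Per}(T_{h})$ for $h$ slightly less than $h(m)$, because the $k$-cycle realising $h(k)$ persists when $h(k)<h(m)$. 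This contradicts the definition of $h(m)$ and yields equality with the tail. I expect this step to be the main obstacle: ensuring that the cycle realising $h(k)$ has maximum strictly below $h(m)$, rather than equal to it. I would first try explicit Štefan cycle computations for the tent map, using its piecewise linear form with slope $\pm2$, where periodic points are dyadic-type rationals and the maxima can be compared directly.

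For the second part, take $\tilde h\in(h(m),h(m^+))$, chosen so that $\tilde h$ is not periodic for $T$ and does not lie on any of the finitely many cycles of periods in $\mathcal T(m)$. Since no new period appears before $h(m^+)$ and periods only appear at the values $h(\cdot)$, the period set is unchanged. Every $k$-cycle of $T_{\tilde h}$ with $k\in\mathcal T(m)$, $k>1$, is then a $T$-cycle with maximum strictly below $\tilde h$. Such a cycle avoids the plateau $[\tilde h/2,1-\tilde h/2]$ and its endpoints, because points of the cycle map to points $<\tilde h$. It also avoids $0$ and $1$, since these are fixed or preperiodic to the fixed point $0$, and avoiding them is what makes $k>1$ matter. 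On a neighbourhood of such a cycle, $T_{\tilde h}$ is affine with slope $\pm2$, so $|(T^k)'|=2^k\neq1$ and the cycle is hyperbolic. The fixed point $2/3$ is likewise hyperbolic when $2/3<\tilde h$. This gives $T_{\tilde h}\in\mathcal H_C$ and the stated orbits $\mathcal O_k$.
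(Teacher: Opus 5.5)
For the first assertion you take a more self-contained route than the paper, which imports from Burns--Hasselblatt both the criterion ``$T_h$ has an $l$-cycle in $[0,h)$ iff $h(l)<h$'' and the ordering of the heights. Getting $\mathcal T(m)\subset\mathrm{Per}(T_{h(m)})$ directly from Theorem~\ref{thm:shark} is a genuine simplification. Your reverse inclusion, however, breaks at ``rule out the orbit of $h$ by choosing $h$ generic'': the set of $h$ for which $h$ is a $T_h$-periodic point of period $m$ can be a whole interval ending at $h(m)$. For $m=2$, where $h(2)=4/5$, every $h\in(2/3,4/5)$ gives the $2$-cycle $\{h,\,2-2h\}$ of $T_h$. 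So an $m$-cycle forced just below $h(m)$ need not be a $T$-cycle. The repair is to apply forcing at $h^*:=\max Q$, where $Q$ is a putative $T_{h(m)}$-cycle of some period $k\neq m$ that forces $m$. The $T_{h(m)}$-orbit of $h(m)$ is the $m$-cycle, so $h(m)\notin Q$. Hence $Q$ is a $T$-cycle in $[0,h(m))$ and $h^*<h(m)$, which also disposes of the obstacle you anticipated. The $T_{h^*}$-orbit of $h^*$ is $Q$ itself. Therefore any $m$-cycle of $T_{h^*}$ (one exists by Theorem~\ref{thm:shark}) misses $h^*$, so it is a $T$-cycle in $[0,h^*)$, contradicting the definition of $h(m)$.

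The second assertion is where the proposal genuinely fails, and this cannot be repaired. Your own first step says a periodic orbit of $T_{\tilde h}$ is either a $T$-cycle in $[0,\tilde h)$ or the orbit of $\tilde h$. But ``periods only appear at the values $h(\cdot)$'' silently discards the second kind, and requiring $\tilde h$ to be non-periodic for $T$ does not prevent it from being periodic for $T_{\tilde h}$. For $m=2$ one has $m^+=4$ and $h(4)=14/17$ (the three $4$-cycles of $T$ have maxima $14/15$, $16/17$, $14/17$). For every $h\in(4/5,14/17)$ the points $h\mapsto 2-2h\mapsto 4-4h\mapsto 8h-6\mapsto h$ form a $4$-cycle of $T_h$; for instance $0.81\mapsto 0.38\mapsto 0.76\mapsto 0.48\mapsto 0.81$. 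So for every $\tilde h\in(h(2),h(4))$ the period set of $T_{\tilde h}$ contains $4\notin\mathcal T(2)=\{1,2\}$, and the ``same set of minimal periods'' claim is false as stated. The paper's proof has the same defect: its choice of $\tilde h(m)$ rests on the assertion that $T_h^r(h)=h$ has only finitely many solutions $h$, which fails for $r=4$ on the whole interval $(4/5,14/17)$. What survives, from your argument and the paper's, is weaker. For $\tilde h\in(h(m),h(m^+))$, every $k\in\mathcal T(m)$ is realised by a $T$-cycle in $(0,h(m)]$ (for $k=1$ take $2/3$). Such a cycle stays off the plateau $[\tilde h/2,1-\tilde h/2]$ and has multiplier $\pm2^k$. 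This containment with hyperbolicity, not equality of period sets, is what the proof of Theorem~\ref{thm:main_forcing} actually uses when it is applied in Theorem~\ref{thm:SharkRealisationAle}.
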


\begin{proof}
Consider first the full tent map $T_1(x)=1-2\bigl|x-\tfrac12\bigr|$ on $[0,1]$.
As shown in Burns--Hasselblatt~\cite{BurnsHasselblatt}, the associated family of
truncated tent maps
\[
  T_h(x)=\min\{h,T_1(x)\}, \qquad 0\le h\le 1,
\]
has the following properties.
The map $T_0$ has only the fixed point $0$, while $T_1$ has a $3$--cycle and hence,
by the Sharkovskiĭ forcing theorem, possesses periodic orbits of all natural
minimal periods. Moreover, $T_1$ has only finitely many periodic points of each
given minimal period. Finally, if $h\le k$, then any periodic orbit
$O\subset[0,h)$ of $T_h$ is also a periodic orbit of $T_k$, and conversely any
periodic orbit $O\subset[0,h]$ of $T_k$ is a periodic orbit of $T_h$;
see~\cite[Sec.~5]{BurnsHasselblatt} for details.

Note that $T_h(0)=0$ for every $h\in[0,1]$, so $\{0\}$ is always a fixed point of $T_h$.

For each $m\in\mathbb N$, define the critical height
\[
  h(m):=\min\{\max O: O \text{ is an $m$--cycle of }T_1\}.
\]
Since $T_1$ has only finitely many $m$--cycles, this minimum is well defined.
Using the monotonic dependence on the parameter and the specific structure of
the family $T_h$, Burns and Hasselblatt show the following (see again
\cite[Sec.~5]{BurnsHasselblatt}): for every $h\in[0,1]$ and every $l\in\mathbb N$,
the map $T_h$ has an $l$--cycle contained in $[0,h)$ if and only if $h(l)<h$.
In addition, the forward orbit of the point $h(m)$ under $T_{h(m)}$ is an
$m$--cycle, and every periodic orbit of $T_{h(m)}$ distinct from $\orb(h(m))$
is contained in $[0,h(m))$. Finally, the ordering of the heights $h(\cdot)$
coincides with the Sharkovskiĭ ordering in the sense that $h(l)<h(m)$
holds if and only if $l\prec  m$.

Fix $m\in\mathbb N$ with $m\neq 1$. The description of the heights yields the set
of minimal periods of $T_{h(m)}$. Indeed, the orbit $\orb(h(m))$ gives a periodic
orbit of minimal period $m$, so $m$ occurs as a period. If $k\prec  m$, then
$h(k)<h(m)$, and hence (taking $h=h(m)$ and $l=k$ in the characterization above)
there exists a $k$--cycle of $T_{h(m)}$ contained in $[0,h(m))$.
Conversely, let $l$ be the minimal period of any periodic orbit $O$ of $T_{h(m)}$
which is not a fixed point. If $\max O<h(m)$, then the same characterization
implies $h(l)<h(m)$ and hence $l\prec  m$. If $\max O=h(m)$, then by
construction $O=\orb(h(m))$ and therefore $l=m$. It follows that the set of all
minimal periods of $T_{h(m)}$ is exactly the tail
\[
\{k\in\mathbb N: k=m \text{ or } k\prec m\}.
\]

We now choose a parameter slightly above $h(m)$ in order to obtain a map in
$\mathcal H_C$ while keeping the same set of minimal periods. Let $m^+$ denote the
immediate successor of $m$ in the Sharkovskiĭ order, and choose a parameter
\[
  \tilde h(m)\in\bigl(h(m),h(m^+)\bigr).
\]
Then for every $\ell\in\mathbb N$ we have $h(\ell)<\tilde h(m)$ if and only if
$h(\ell)\le h(m)$, because $\tilde h(m)$ lies strictly between $h(m)$ and the next
height $h(m^+)$. Hence $T_{\tilde h(m)}$ has an $\ell$--cycle contained in
$[0,\tilde h(m))$ if and only if $\ell\prec  m$. In particular, the set
of minimal periods of $T_{\tilde h(m)}$ is the the same tail
$\{k\in\mathbb N: k=m \text{ or } k\prec m\}$.

Moreover, since $h(m)<\tilde h(m)$, the characterization implies that
$T_{\tilde h(m)}$ admits an $m$--cycle $O_m$ contained in $[0,\tilde h(m))$.
For each $k\prec  m$ with $k>1$, choose a $k$--cycle $\mathcal O_k$ of
$T_{\tilde h(m)}$ contained in $[0,\tilde h(m))$, whose existence follows from
the same characterization. Since $[0,\tilde h(m))\subset[0,1)$, no point of
$\mathcal O_k$ can be equal to $1$. Also $\mathcal O_k$ cannot contain $0$
because $k>1$ and $0$ is a fixed point. Hence $\mathcal O_k\subset(0,1)$.

Let
\[
  P_{\tilde h(m)}:=\{x\in[0,1]:T_{\tilde h(m)}(x)=\tilde h(m)\}
  =\bigl[\tilde h(m)/2,\,1-\tilde h(m)/2\bigr]
\]
be the plateau of $T_{\tilde h(m)}$. If $x\in \mathcal O_k\cap P_{\tilde h(m)}$,
then $T_{\tilde h(m)}(x)=\tilde h(m)$, so the next point of the orbit is
$\tilde h(m)\notin[0,\tilde h(m))$, contradicting $\mathcal O_k\subset[0,\tilde h(m))$.
Thus $\mathcal O_k$ is disjoint from $P_{\tilde h(m)}$, and in particular avoids
its endpoints $\tilde h(m)/2$ and $1-\tilde h(m)/2$, which are the corner points of
the truncated tent. This proves the claimed geometric properties.

Finally, we show that $T_{\tilde h(m)}\in\mathcal H_C$ in the sense of
Definition~\ref{def:whatever-HC-is}. Recall that $T_{\tilde h(m)}$ is $C^1$ on
$[0,1]$ except at the two corner points
\[
  c^-:=\tilde h(m)/2, \qquad c^+:=1-\tilde h(m)/2.
\]
We further choose $\tilde h(m)\in(h(m),h(m^+))$ so that none of the points
$\tilde h(m)$, $c^-$, and $c^+$ is periodic for $T_{\tilde h(m)}$.
This is possible because, for each $r\in\mathbb N$, each of the equations
$T_h^r(h)=h$, $T_h^r(c^-)=c^-$, and $T_h^r(c^+)=c^+$ has only finitely many
solutions in $h\in(0,1]$, and hence the set of parameters for which one of
these points is periodic is countable.

With this choice, no periodic orbit of $T_{\tilde h(m)}$ can meet the plateau
$P_{\tilde h(m)}$, because every point of $P_{\tilde h(m)}$ maps in one step to
$\tilde h(m)$, and $\tilde h(m)$ is not periodic. Therefore every periodic orbit
$O$ which is not a fixed point is disjoint from $P_{\tilde h(m)}$, hence avoids
the corner points $\{c^-,c^+\}$ and lies in $\mathrm{int}\,C=(0,1)$.
Consequently, each such orbit admits a neighbourhood $U$ on which
$T_{\tilde h(m)}$ is $C^1$.

Let $O$ be a periodic orbit of $T_{\tilde h(m)}$ which is not a fixed point, and
let $k$ be its minimal period. Since $O$ is disjoint from the plateau, every point
of $O$ lies on one of the linear branches of $T_{\tilde h(m)}$, where
$T'_{\tilde h(m)}(x)\in\{2,-2\}$. Hence for all $p\in O$ we have
\[
  \bigl|(T_{\tilde h(m)}^{k})'(p)\bigr|=2^{k}\neq 1.
\]
Thus every non fixed point periodic orbit of $T_{\tilde h(m)}$ is hyperbolic and
lies in $\mathrm{int}\,C$, hence $T_{\tilde h(m)}\in\mathcal H_C$.
\end{proof}

\begin{proof}[Proof of Theorem~\ref{thm:SharkRealisationAle}]
Let $\mathcal{T}$ be a finite Sharkovskiĭ's tail and denote by $n$ the
first element of $\mathcal{T}$ in Sharkovskiĭ's order. Thus
$\mathcal{T}\subset \mathcal{T}(n)$ and $\mathcal{T}$ is a finite initial
segment of this tail.

\medskip\noindent

Consider the family of truncated tent maps from
Definition~\ref{def:truncated-tent}. 
By Theorem~\ref{thm:truncated-tent-tail} (with $m=n$), there exists $h\in(0,1]$ such that
the interval map $f := T_h \colon C\to C$, with $C=[0,1]$, has minimal periods given
exactly by the tail $\mathcal{T}(n)$.
In particular, $f$ admits a periodic orbit of minimal period $n$ contained in $\mathrm{int}\,C$.
Combining Theorem~\ref{thm:truncated-tent-tail} with the discussion above, we also have
$f\in\mathcal{H}_{\mathcal{T}}$.

\medskip\noindent
Set $\mathcal{O}_n := \mathcal{T}\subset\mathcal{T}(n)$. Since $f$ has
a hyperbolic periodic orbit of minimal period $n$ in $\mathrm{int}\,C$,
we may apply Theorem~\ref{thm:main_forcing} to the pair $(f,\mathcal{O}_n)$.

We obtain $\epsilon>0$ such that, for every RDS $\phi$ generated by
some $\varphi\in\mathcal{R}^1_{\epsilon}(f)$ with noise $\theta$
ergodic over $\mathbb{P}$, the set of minimal periods of random periodic
points of $\phi$ contains $\mathcal{T}$. If, in addition, $\theta^k$ is
ergodic for all $k\in\mathbb{N}$, then the set of minimal periods of
random periodic orbits of $\phi$ also contains $\mathcal{T}$. Moreover,
there exists $\delta>0$ such that, for the same choice of $\phi$, the
set of minimal periods of $(\delta,k)$--random periodic orbits contains
$\mathcal{T}$ as well.

\medskip

In particular, for any such choice of $\varphi$ and $\theta$, the set of minimal periods
of random periodic points of $\phi$ contains $\mathcal{T}$, and the same holds for random periodic
orbits and for $(\delta,k)$--random periodic orbits.
This is exactly what is claimed in the three assertions of
Theorem~\ref{thm:SharkRealisationAle}. \qedhere

\end{proof}

\section{Example of random perturbations} \label{Examples}
\subsection{Random tent map}

We illustrate Theorem \ref{thm:main_forcing} using a random perturbation of the tent map. We construct an RDS by selecting maps from a family of asymmetric tent maps with displaced peaks, as shown in Figure \ref{fig:random-tent-family}.

\pgfmathdeclarefunction{Tent}{2}{%
  \pgfmathparse{(#1 <= 0.5 + #2) ?
    2*#1/(1 + 2*#2) :
    2*(1-#1)/(1 - 2*#2)}%
}

\begin{figure}[ht]
  \centering
  \begin{tikzpicture}
    \begin{axis}[
      width=8cm, height=6cm,
      axis lines=left,
      xlabel={$x_n$},
      ylabel={$x_{n+1}$},
      xmin=0, xmax=1,
      ymin=0, ymax=1,
      grid=major,
      legend style={at={(1.02,0.5)},anchor=west},
      legend cell align={left},
    ]

      \addplot[gray!60, dashed, domain=0:1, samples=100] {x};
      \addlegendentry{$y=x$}

      \pgfmathsetmacro{\ga}{-0.08}
      \pgfmathsetmacro{\gb}{0.02}
      \pgfmathsetmacro{\gc}{0.12}

      \addplot[plotblue2, very thick, domain=0:1, samples=200]
        {Tent(x,\ga)};
      \addlegendentry{$T_{\gamma_1}$}

      \addplot[plotred, very thick, domain=0:1, samples=200]
        {Tent(x,\gb)};
      \addlegendentry{$T_{\gamma_2}$}

      \addplot[plotgreen, very thick, domain=0:1, samples=200]
        {Tent(x,\gc)};
      \addlegendentry{$T_{\gamma_3}$}


      \pgfmathsetmacro{\xzero}{0.15}

      \pgfmathsetmacro{\xone}{Tent(\xzero,\ga)}
      \pgfmathsetmacro{\xtwo}{Tent(\xone,\gb)}
      \pgfmathsetmacro{\xthree}{Tent(\xtwo,\gc)}
      \pgfmathsetmacro{\xfour}{Tent(\xthree,\ga)}

      \draw[red, ->, thick]
        (axis cs:\xzero,\xzero) --
        (axis cs:\xzero,\xone) --
        (axis cs:\xone,\xone);

      \draw[red, ->, thick]
        (axis cs:\xone,\xone) --
        (axis cs:\xone,\xtwo) --
        (axis cs:\xtwo,\xtwo);

      \draw[red, ->, thick]
        (axis cs:\xtwo,\xtwo) --
        (axis cs:\xtwo,\xthree) --
        (axis cs:\xthree,\xthree);

      \draw[red, ->, thick]
        (axis cs:\xthree,\xthree) --
        (axis cs:\xthree,\xfour) --
        (axis cs:\xfour,\xfour);

      \addplot[only marks, mark=*, mark size=1.3pt, red]
        coordinates {
          (\xzero,\xone)
          (\xone,\xtwo)
          (\xtwo,\xthree)
          (\xthree,\xfour)
        };

      \node[below, red] at (axis cs:\xzero,0) {$x_0$};

      \addlegendimage{red, thick, -{Latex[length=3pt]}}
      \addlegendentry{Random trajectory}
    \end{axis}
  \end{tikzpicture}
  \caption{Random iteration of asymmetric tent maps $T_\gamma$ with common peak
    height~$1$. The coloured graphs represent several maps
    $T_\gamma$ with different peak locations $x = \tfrac{1}{2} + \gamma$,
    and the red staircase shows a sample trajectory in the $(x_n,x_{n+1})$ plane
    obtained by randomly switching between these maps.}
  \label{fig:random-tent-family}
\end{figure}

Formally, consider the space $X=(-\xi, \xi)$ with $\xi<\frac{1}{2}$ fixed with the Borel $\sigma$-field and equipped with the uniform distribution over this space. This parameter $\xi$ will be our source of control over the perturbation that we will add onto the system. We associate to each element $\gamma \in X$ a function of the family of maps $\mathcal{F}$ with elements $T(\gamma):[0,1]\rightarrow[0,1]$ of the form:
\begin{equation}
    T(\gamma)(x)=\begin{cases}
        \frac{2x}{1+2\gamma},\, \text{ if }0\leq x\leq \frac{1}{2}+\gamma \\
        \frac{2(1-x)}{1-2\gamma},\, \text{ if }\frac{1}{2}+\gamma\leq x\leq 1
    \end{cases}
\end{equation}

Those functions are called asymmetric tent maps. Therefore, we have a probability measure over the space $\mathcal{F}$. Consider the space of sequences with entries in $\mathcal{F}$ given by:
\begin{equation}
    \Omega = \mathcal{F}^{\mathbb{N}}=\{\omega\,:\,(T(\gamma_0), T(\gamma_1),\dots)\text{ where }T(\gamma_k)\in \mathcal{F}\text{ for any }k\in \mathbb{N}\}
\end{equation}
and we equip $\Omega$ with the Bernoulli measure defined on the cylinders induced by the uniform measure on $X$. Therefore, we have the random dynamical system given by:

\begin{equation} \label{defTendaAle}
    \phi(n,\omega)(\cdot)=T(\gamma_{n-1})\circ\dots\circ T(\gamma_0)(\cdot)
\end{equation}

Essentially, on each iteration of the system we randomly select an asymmetric tent map and apply this map to $[0,1]$, so that we are moving those points in time on each step by a (possibly, and very likely) different asymmetric tent map as shown in the Eq. \eqref{defTendaAle}. 

We will use the Continuation Property stated in Theorem \ref{ContinuationProperty} in order to decide about the existence of a maximal invariant set inside a neighbourhood of the point $\frac{2}{3}$. Take $\epsilon>0$ satisfying $[\frac{2}{3}-2\epsilon, \frac{2}{3}+2\epsilon]\subset(\frac{1}{2},1)$, i.e., the properties we have demanded in Subsection \ref{GenMapsConleyIndex} and now suppose that $|\xi|\frac{\epsilon}{4}$ and demand that $\frac{1}{2} + |\xi|\leq \frac{2}{3}-2\epsilon$ (this second assumption guarantees that the map is decreasing in $N$). Define $N = [\frac{2}{3}-\epsilon,\frac{2}{3}+\epsilon]$ and, for $\lambda\in [0,1]$, consider the family of random maps $T_{\lambda}$ defined from $[0,1]$ onto itself given by:

\begin{equation}
    T_{\lambda}(\gamma)(x) = \begin{cases}
        \frac{2x}{1+2\lambda\gamma},\, \text{ if }0\leq x \leq \frac{1}{2}+\lambda\gamma \\
        \frac{2(1-x)}{1-2\lambda\gamma},\,\text{ if }\frac{1}{2}+\lambda\gamma \leq x \leq 1
    \end{cases}
\end{equation}
and note that when $\lambda = 0$ we have the original tent map. 

We want to show that $N$ is a random isolating neighbourhood for all $T_{\lambda}(\cdot)$ for whichever choice of $\lambda\in [0,1]$. Once more, in order to prove this, we must check that $\text{Inv}(N)\subset\text{int}(N)$; to do so, it is enough to show that $\partial N=\{\frac{2}{3}-\epsilon, \frac{2}{3}+\epsilon\}$ is such that $T_{\lambda}(\gamma)(\partial N)\cap \text{int}N = \emptyset$ for all $\gamma$.

Let $\gamma\in X$ be given then. We have that $T_{\lambda}(\gamma)(\frac{2}{3}-\epsilon)= \frac{\frac{2}{3}+2\epsilon}{1-2\gamma}$ and we will show that $T_{\lambda}(\gamma)(\frac{2}{3}-\epsilon)>\frac{2}{3}+\epsilon$; indeed, $\frac{\frac{2}{3}+2\epsilon}{1-2\gamma}>\frac{2}{3}+2\epsilon\iff\frac{2}{3}+2\epsilon > (\frac{2}{3}+\epsilon)(1-2\gamma)$ since $\gamma < \frac{1}{2}$ by assumption. Therefore, we have that $\frac{2}{3}+2\epsilon > (\frac{2}{3}+\epsilon)(1-2\gamma)\iff\epsilon>\frac{-4\gamma}{3}-2\gamma\epsilon$ and, at last, since $\gamma$ can take positive values as well as negative ones, we have that $\epsilon > \frac{-4\gamma}{3}-2\gamma\epsilon$ would follow if we showed that $|\frac{-4\gamma}{3}-2\gamma\epsilon|<\epsilon$. But note that:

\begin{equation}
    |\frac{-4\gamma}{3}-2\gamma\epsilon| = |\gamma(\frac{-4}{3}-2\epsilon)| = |\gamma|(\frac{4}{3}+2\epsilon)<\frac{\epsilon}{4}(\frac{4}{3}+2\epsilon) = \frac{\epsilon}{3} + \frac{\epsilon^2}{2}<\frac{\epsilon}{3} + \frac{\epsilon}{2}<\epsilon
\end{equation}

On the other hand, we have that $T_{\lambda}(\gamma)(\frac{2}{3}+\epsilon) = \frac{\frac{2}{3}-2\epsilon}{1-2\gamma}$ and we will show that $T_{\lambda}(\gamma)(\frac{2}{3}+\epsilon)<\frac{2}{3}-\epsilon$; indeed, once more we have that $\gamma<\frac{1}{2}$ implies that $\frac{\frac{2}{3}-2\epsilon}{1-2\gamma}<\frac{2}{3}-\epsilon \iff \frac{2}{3}-2\epsilon < (\frac{2}{3}-\epsilon)(1-2\gamma)$ and if we expand the right hand side of this inequality we actually need to check that $-\epsilon < \frac{-4\gamma}{3}+2\gamma\epsilon\iff \epsilon>\frac{4\gamma}{3}-2\gamma\epsilon$ and, again, note that to do so it is enough to check that $|\frac{4\gamma}{3}-2\gamma\epsilon|<\epsilon$. But we have:

\begin{equation}
    |\frac{4\gamma}{3}-2\gamma\epsilon|= |\gamma(\frac{4}{3}-2\epsilon)|=|\gamma||(\frac{4}{3}-2\epsilon)|<|\gamma|(\frac{4}{3}+2\epsilon)<\frac{\epsilon}{4}(\frac{4}{3}+2\epsilon)<\epsilon
\end{equation}

Therefore $N$ is a random isolating neighbourhood for all maps $T_{\lambda}(\cdot)$ with $\lambda\in [0,1]$; using the Continuation Property from Theorem \ref{ContinuationProperty}, apart from a choice of $\xi$ possibly smaller, we have that $h(S_{\lambda}, T_{\lambda}(\cdot)) = h(S_0,T_0)\neq 0$ from the calculation done in Subsection \ref{GenMapsConleyIndex}. Therefore, using Wazewiski property from Theorem \ref{wazeProperty}, we have that the maximal invariant set of $N$ under the action of the random dynamical system $\varphi(n,\omega)(\cdot)$ defined as in Eq. \eqref{defTendaAle} is non empty. 

Note that the Continuation Property can be used in this case precisely because, since we are considering a family of functions defined on a compact set and taking values on this same set, the $C^0$-topology coincides with the topology generated by the infinite norm, i.e., for any function $f$ from $C$ to $C$ one has that $||f||_{\infty} = \max_{x\in C}|f(x)|$

\subsection{Random logistic map}
In this subsection we briefly indicate how our abstract results specialise to a random logistic family, and we omit the technical details, which follow directly from the general theory developed in the preceding sections. Let 
\[
  f_c(x) = c\,x(1-x), \qquad x \in [0,1],\; c \in (0,4],
\]
and define a random dynamical system by allowing the parameter \(c\) to vary
in time in a random way. Let \((\Omega,\mathcal{F},\mathbb{P},\theta)\) be a
metric dynamical system and let \(c\colon\Omega\to C \subset (0,4]\) be a
measurable map representing the random parameter; for instance, \(c(\omega)\)
may be distributed according to the normalised Lebesgue measure on a subset
\(C\). The associated random logistic map is the RDS \(\varphi\) defined by
\[
  \varphi(n,\omega,x) 
    = f_{c(\theta^{n-1}\omega)}\circ\cdots\circ f_{c(\omega)}(x), 
    \qquad x\in[0,1],\; n\in\mathbb{N}.
\]
In this setup, at each iteration \(j\ge 0\), the system applies the logistic
map \(f_{c(\theta^{j}\omega)}\) corresponding to the current realisation of
the parameter.

	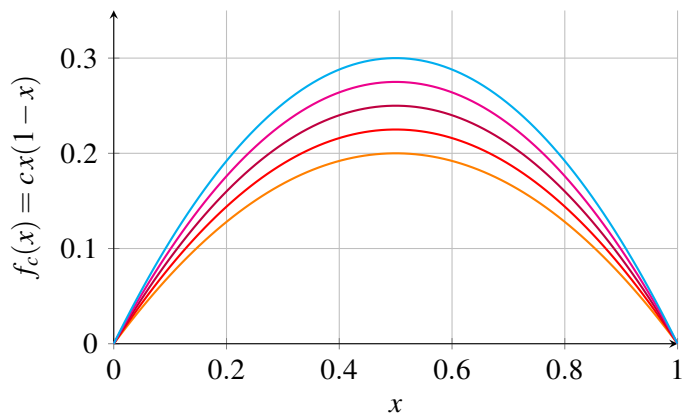
\begin{figure}[ht]
  \centering
  \begin{tikzpicture}[scale=1]
    \begin{axis}[
      width=9cm, height=6cm,
      xlabel={$x$},
      ylabel={$f_c(x) = c\,x(1-x)$},
      axis lines=left,
      grid=both,
      xmin=0, xmax=1,
      ymin=0, ymax=0.35
    ]
      \addplot[thick, orange]  expression[domain=0:1, samples=100]{0.8 * x * (1 - x)};
      \addplot[thick, red]     expression[domain=0:1, samples=100]{0.9 * x * (1 - x)};
      \addplot[thick, purple]  expression[domain=0:1, samples=100]{1.0 * x * (1 - x)};
      \addplot[thick, magenta] expression[domain=0:1, samples=100]{1.1 * x * (1 - x)};
      \addplot[thick, cyan]    expression[domain=0:1, samples=100]{1.2 * x * (1 - x)};
    \end{axis}
  \end{tikzpicture}
  \caption{Graphs of the logistic maps $f_c(x)=c\,x(1-x)$ on $[0,1]$ for 
  the parameter values $c=0.8,0.9,1.0,1.1$ and $1.2$.}
  \label{fig:logistic-small-c}
\end{figure}

To apply our main result, fix a distinguished parameter \(c_0 \in (0,4]\) such
that the deterministic map \(f_{c_0}\) possesses a hyperbolic periodic orbit.
Such orbits can be identified, for example, by using the topological
conjugacy between the logistic map with \(c_0=4\) and the tent map, or by
choosing \(c_0\) in a periodic window. Let \(N\subset[0,1]\) be an isolating
neighbourhood for this periodic orbit under \(f_{c_0}\). By the continuity of
the family \(c\mapsto f_c\), there exists a neighbourhood \(U\) of \(c_0\)
such that \(N\) remains an isolating neighbourhood for \(f_c\) for all
\(c \in U\).

Assuming that the support \(C\) of the random variable \(c(\cdot)\) is
contained in \(U\) (that is, the random perturbations of the parameter are
sufficiently small), the set \(N\) defines a random isolating neighbourhood
for the RDS \(\varphi\). Invoking the continuation property of the random
Conley index (Theorem~\ref{ContinuationProperty}), the index of the maximal
invariant set inside \(N\) for the random system coincides with that of the
deterministic periodic orbit for \(f_{c_0}\). Consequently, the random system
\(\varphi\) admits a non-trivial random invariant set within \(N\) that
inherits the topological signature—and hence the minimal period—of the
original deterministic orbit.

Theorem~\ref{thm:main_forcing} then ensures that this deterministic period is
realised as a random minimal period for sufficiently small random
perturbations of the logistic map.

\section*{Acknowledgements}
The authors would like to thank Mariana Rodrigues da Silveira for many insightful discussions and, in particular, for her substantial contributions to our understanding of the deterministic Conley index. Isabella Alvarenga gratefully acknowledges financial support from CNPq (Brazil) for her Master’s program, during which part of this research was carried out.

\bibliographystyle{plain}

\bibliography{references.bib}

@book{willard2012general,
  title={General topology},
  author={Willard, Stephen},
  year={2012},
  publisher={Courier Corporation},
  address   = {Mineola}
}

@book{arnold1995random,
  title     = {Random Dynamical Systems},
  author    = {Arnold, Ludwig},
  year      = {1998},
  publisher = {Springer},
  address   = {Berlin}
}

@article{liu2008conley,
  title={Conley index for random dynamical systems},
  author={Liu, Zhenxin},
  journal={Journal of Differential Equations},
  volume={244},
  number={7},
  pages={1603--1628},
  year={2008},
  publisher={Elsevier}
}

@article{szymczak1995conley,
  title={The Conley index for discrete semidynamical systems},
  author={Szymczak, Andrzej},
  journal={Topology and its Applications},
  volume={66},
  number={3},
  pages={215--240},
  year={1995},
  publisher={Elsevier}
}

@article{andres2008randomization,
  title={Randomization of Sharkovskii-type theorems},
  author={Andres, Jan},
  journal={Proceedings of the American Mathematical Society},
  volume={136},
  number={4},
  pages={1385--1395},
  year={2008}
}

@article{klunger2001periodicity,
  title={Periodicity and Sharkovsky's theorem for random dynamical systems},
  author={Kl{\"u}nger, Marc},
  journal={Stochastics and Dynamics},
  volume={1},
  number={03},
  pages={299--338},
  year={2001},
  publisher={World Scientific}
}

@article{sharkovskiui1995coexistence,
  title={Coexistence of cycles of a continuous map of the line into itself},
  author={Sharkovskii, A. N.},
  journal={International journal of bifurcation and chaos},
  volume={5},
  number={05},
  pages={1263--1273},
  year={1995},
  publisher={World Scientific}
}

@article{Barbarski2011,
  title        = {The Sharkovskii theorem for spaces of measurable functions},
  author       = {Barbarski, Pawel},
  journal      = {Journal of Mathematical Analysis and Applications},
  volume       = {373},
  number       = {2},
  pages        = {414--421},
  year         = {2011},
  publisher    = {Elsevier}
}

@article{AndresBarbarski2016,
  title   = {Randomized Sharkovsky-type results and random subharmonic solutions of differential inclusions},
  author  = {Andres, Jan and Barbarski, Pawel},
  journal = {Proceedings of the American Mathematical Society},
  volume  = {144},
  number  = {5},
  pages   = {1971--1983},
  year    = {2016}
}

@article{LiYorke1975,
  title        = {Period three implies chaos},
  author       = {Li, Tien-Yien and Yorke, James A.},
  journal      = {The American Mathematical Monthly},
  volume       = {82},
  number       = {10},
  pages        = {985--992},
  year         = {1975}
}

@article{Kloeden1979,
  title   = {On Sharkovsky's cycle coexistence ordering},
  author  = {Kloeden, Peter E.},
  journal = {Bulletin of the Australian Mathematical Society},
  volume  = {21},
  number  = {2},
  pages   = {185--192},
  year    = {1980}
}

@article{BurnsHasselblatt,
  author    = {Burns, Keith and Hasselblatt, Boris},
  title     = {The {Sharkovsky} Theorem: A Natural Direct Proof},
  journal   = {The American Mathematical Monthly},
  volume    = {118},
  number    = {3},
  pages     = {229--244},
  year      = {2011},
  publisher = {Taylor \& Francis},
}

@article{Baldwin1991,
  title   = {An extension of Sharkovsky's theorem to the $n$-od},
  author  = {Baldwin, Scott},
  journal = {Ergodic Theory and Dynamical Systems},
  volume  = {11},
  pages   = {249--271},
  year    = {1991}
}

@book{AlsedaLlibreMisiurewicz2000,
  title        = {Combinatorial dynamics and entropy in dimension one},
  author       = {Alsed{\`a}, Llu{\'\i}s and Llibre, Jaume and Misiurewicz, Micha{\l}},
  publisher    = {World Scientific},
  year         = {2000},
  address   = {Singapore},

}

@article{Andres2019,
  title={Application of the randomized Sharkovsky-type theorems to random impulsive differential equations and inclusions},
  author={Andres, Jan},
  journal={Journal of Dynamics and Differential Equations},
  volume={31},
  number={4},
  pages={2127--2144},
  year={2019},
  publisher={Springer}
}

@phdthesis{villapouca2013teoria,
  title={A teoria do {\'\i}ndice de Conley discreta para conjuntos b{\'a}sicos zero-dimensionais},
  author={Villapouca, Mariana Gesualdi},
  year={2013},
  school = {Universidade Estadual de Campinas},
address = {Campinas}
}

@article{mrozek1990leray,
  title={Leray functor and cohomological Conley index for discrete dynamical systems},
  author={Mrozek, Marian},
  journal={Transactions of the American Mathematical Society},
  volume={318},
  number={1},
  pages={149--178},
  year={1990}
}

@article{franks2000shift,
  title={Shift equivalence and the Conley index},
  author={Franks, John and Richeson, David},
  journal={Transactions of the American Mathematical Society},
  volume={352},
  number={7},
  pages={3305--3322},
  year={2000}
}

\end{document}